\begin{document}

\title[Universal Quantum Semigroupoids]
{Universal Quantum Semigroupoids}

\author{Hongdi Huang, Chelsea Walton, Elizabeth Wicks, and Robert Won}

\address{Huang: Department of Mathematics, Rice University, Houston, TX 77005, USA}
\email{h237huan@rice.edu}

\address{Walton: Department of Mathematics, Rice University, Houston, TX 77005, USA}
\email{notlaw@rice.edu}

\address{Wicks: Microsoft Corporation, Redmond, WA 98052, USA} 
\email{elizabeth.wicks@microsoft.com}

 \address{Won: Department of Mathematics, The George Washington University, Washington, DC 20052,~USA}
\email{robertwon@gwu.edu}

\begin{abstract} 
We introduce the concept of a universal quantum linear 
semigroupoid \linebreak (UQSGd), which is a weak bialgebra that coacts on a (not necessarily connected) graded algebra $A$ universally while preserving grading. We restrict our attention to algebraic structures with a commutative base so that the UQSGds under investigation are face algebras (due to Hayashi). The UQSGd construction generalizes the universal quantum linear semigroups introduced by Manin in 1988, which are bialgebras that coact on a connected graded algebra universally while preserving grading. Our main result is that when $A$ is the path algebra $\kk Q$ of a finite quiver $Q$, each of the various UQSGds introduced here is isomorphic to the face algebra attached to $Q$. The UQSGds of preprojective algebras and of other algebras attached to quivers are also investigated. 
\end{abstract}

\subjclass[2020]{16S10, 16T99, 16W50}
\keywords{path algebra, preprojective algebra, universal coaction, weak bialgebra}

\maketitle


\setcounter{tocdepth}{2}


\section{Introduction} \label{sec:intro}

Broadly, this work initiates the study of universal quantum symmetries in the weak context, which is needed as weak Hopf algebras naturally arise in the theory of fusion categories \cite{ENO}, in capturing symmetries in subfactor theory \cite{NV2000} (e.g., as remarked in \cite{BCG}), and in studying solutions of the dynamical Yang-Baxter equation \cite{EN}. The goal of this work is to examine the universal quantum symmetries of $\mathbb{N}$-graded algebras, that are not necessarily connected, within the framework of weak bialgebra coactions.
 All algebraic structures here are $\kk$-linear, for $\kk$ an arbitrary base field, and we reserve $\otimes$ to mean~$\otimes_{\kk}$. Consider the hypotheses below.

\begin{hypothesis}[$A, A_0$] \label{hyp:A}
Let $A$ be a locally finite, $\N$-graded $\kk$-algebra $A$, that is, $A$ has $\kk$-vector space decomposition $\bigoplus_{i \in \mathbb{N}} A_i$ with $A_i \cdot A_j  \subseteq  A_{i+j}$,  and $\dim_\kk A_i <  \infty$. 
Further suppose that the degree 0 component $A_0$, which is a finite-dimensional $\kk$-subalgebra of $A$, is a commutative and separable $\kk$-algebra. In particular, this implies that $A_0$ is a Frobenius algebra over $\kk$.
We say that $A$ is {\it connected} if $A_0 = \kk$, although we do not assume that condition here.
\end{hypothesis}


The prototypical examples of algebras $A$ whose symmetries we will examine are path algebras of finite quivers. Throughout, we fix the following notation.

\begin{notation}[$Q, Q_0, Q_1, s, t, \kk Q, e_i, p, q, a, b$] \label{not:quiver}
Let $Q = (Q_0,Q_1,s,t)$ be a finite {\it quiver} (i.e., a directed graph), where $Q_0$ is a finite collection of vertices, $Q_1$ is a finite collection of arrows, and $s, t:  Q_1 \to Q_0$ denote the source and target maps, respectively. We read paths of $Q$ from left-to-right. 
Let $\kk Q$ be the \emph{path algebra} attached to $Q$, which is the $\kk$-algebra generated by $\{e_i\}_{i \in Q_0}$ and $\{p\}_{p \in Q_1}$, with multiplication given by $m(e_i \otimes e_j) = \delta_{i,j} e_i$ for $i,j \in Q_0$ and $m(p \otimes q) = \delta_{t(p),s(q)}pq$ for $p,q \in Q_1$,  and with unit given by $u(1_\kk) = \sum_{i \in Q_0} e_i$.  The path algebra $\kk Q$ is $\N$-graded by path length, where for each $\ell \in \N$, $(\kk Q)_\ell = \kk(Q_\ell)$, where $Q_\ell$ consists of paths of length $\ell$ in $Q$. We usually use the letters $a,b$ to denote paths in $Q$.
\end{notation}

Using path algebras as the prototypical examples of not necessarily connected $\kk$-algebras $A$ in Hypothesis~\ref{hyp:A} is apt because  if $A$ is generated by $A_1$ over $A_0$, then $A$ is  isomorphic to a quotient of some path algebra $\kk Q$. Namely,  $A_0$ 
 is isomorphic to the path algebra on an arrowless quiver $Q_0$ with $|Q_0| = \dim_\kk A_0$. Further, path algebras are free structures in the sense that they are tensor algebras: $\kk Q \cong T_{\kk Q_0} (\kk Q_1)$, where $\kk Q_1$ is a $\kk Q_0$-bimodule; we will return to this fact later in the introduction. Moreover, interesting examples of graded quotients of path algebras include  preprojective algebras  \cite{G-P, Ringel}  and superpotential algebras (see, e.g., \cite{BSW}).

\smallskip

Before we study quantum symmetries of the algebras $A$ in Hypothesis~\ref{hyp:A}, let us recall various notions of a universal bialgebra coacting on $A$ in the case when $A$ is connected, which are all due to Manin \cite{Manin} (for the case when $A$ is quadratic). First, we make the standing assumption that will be used throughout this work often without mention.

\begin{hypothesis} \label{hyp:linear-intro}
Let $\mathcal{C}$ be a monoidal category of corepresentations of an algebraic structure $H$. If $A$ is an algebra in $\mathcal{C}$ (i.e., if $A$ is an $H$-comodule algebra), then we assume that each graded component $A_i$ of $A$ is an object in $\mathcal{C}$ (i.e., is an $H$-comodule). Namely, we assume that all coactions of $H$ on $A$ preserve grading, or are {\it linear}, in this work.
\end{hypothesis}

Consider the following universal bialgebras that coact on a connected algebra $A$ as in Hypothesis~\ref{hyp:A}, from either the left or right.  

\begin{definition}[left UQSG, $O^{\text{left}}(A)$; right UQSG, $O^{\text{right}}(A)$] \cite[Chapter~4 and~Sections~5.1--5.8]{Manin} \label{def:UQSG-intro}
Let $A$ be a $\kk$-algebra as in Hypothesis~\ref{hyp:A} that is connected.  
\begin{enumerate}[(a),font=\upshape]
    \item \label{itm:leftUQSG-intro} Let $O:=O^{\text{left}}(A)$ be a bialgebra for which $A$ is a left $O$-comodule algebra via  left $O$-comodule map $\lambda^{O}: A \to O \otimes A$.
We call $O^{\text{left}}(A)$ the {\it left universal quantum linear semigroup (left UQSG) of $A$} if, for any bialgebra $H$ for which $A$ is a left $H$-comodule algebra via  left $H$-comodule map $\lambda^{H}: A \to H \otimes A$, there exists a unique bialgebra map  $\pi: O \to  H$ so that $\lambda^H = (\pi \otimes \id_A) \lambda^{O}$.

\medskip

    \item \label{itm:rightUQSG-intro} Let $O:=O^{\text{right}}(A)$ be a bialgebra for which $A$ is a right $O$-comodule algebra via right $O$-comodule map $\rho^{O}: A \to A \otimes O$.
We call $O^{\text{right}}(A)$ the {\it right universal quantum linear semigroup (right UQSG) of $A$} if, for any bialgebra $H$ for which $A$ is a right $H$-comodule algebra via right $H$-comodule map $\rho^{H}: A \to A \otimes H$,  there exists a unique bialgebra map $\pi: O \to  H$ so that $\rho^H = (\id_A \otimes \pi) \rho^{O}$.
\end{enumerate}
\end{definition}

Other appearances of bialgebras that coact linearly and universally on algebraic structures from one side include the universal bi/Hopf  algebras that coact on (skew-)polynomial algebras in \cite{RR-Taft, LauveTaft, CFR}, and the universal bi/Hopf algebras that coact on a superpotential algebra (or, equivalently that preserve a certain multilinear form) in \cite{DVL, BichonDV, CWW}. 

\smallskip

Ideally, a universal bialgebra should behave ring-theoretically and homologically like the algebra that it coacts on. But this is not the case even when the algebra is a polynomial ring in two variables; see Example~\ref{ex:polynomialManin}\ref{itm:polynomial-leftright}. Namely, $\Oleft(\kk[x,y])$ is a non-Noetherian algebra of infinite Gelfand--Kirillov (GK) dimension, whereas $\kk[x,y]$ is Noetherian of GK-dimension~2. 
Towards the goal above, one can consider a `smaller' universal bialgebra introduced by Manin, which coacts an algebra $A$ universally from the left and right via `transposed' coactions. Indeed, Manin inquired if such a universal bialgebra reflects the behavior of $A$ (in the connected and quadratic case) in \cite[Introduction]{AST}. 

\begin{definition}[transposed UQSG, $O^{\textnormal{trans}}(A)$]  (cf., \cite[Section~5.10, Chapters~6 and~7]{Manin}) \label{def:ManinUQSG-intro}
Let $A$ be a $\kk$-algebra as in Hypothesis~\ref{hyp:A} that is connected.  
\begin{enumerate}[(a),font=\upshape]
    \item \label{itm:transpose-intro} Let $H$ be a bialgebra for which $A$ is a left $H$-comodule algebra via left $H$-comodule map $\lambda^{H}_A$, and for which $A$ is a right $H$-comodule algebra via  right $H$-comodule map $\rho^{H}_A$. We call $A$ a {\it transposed $H$-comodule algebra} if,  for  the transpose of $\rho^H_A$, 
    $$\quad \quad \quad (\rho_A^H)^T: (\mathrm{ev}_A \otimes \id_H \otimes \id_A)(\id_{A^*} \otimes \rho^H_A \otimes \id_{A^*})(\id_{A^*} \otimes \mathrm{coev}_A): A^* \to H \otimes A^*,$$
    we obtain $\lambda^H_A$ by identifying a basis of $A$ with the dual basis of $A^*$.

\medskip

    \item \label{itm:ManinUQSG-intro} Let $O:= O^{\textnormal{trans}}(A)$ be a bialgebra for which $A$ is a transposed $O$-comodule algebra via left $O$-comodule map $\lambda^{O}$ and right $O$-comodule map $\rho^{O}$.
We call $O^{\textnormal{trans}}(A)$ the {\it transposed universal quantum linear semigroup (transposed UQSG) of $A$} if, for any bialgebra $H$ for which $A$ is a transposed $H$-comodule algebra via left $H$-comodule map $\lambda^{H}$ and a right $H$-comodule map $\rho^{H}$, there exists a unique bialgebra map  $\pi: O \to  H$ so that $\lambda^H = (\pi \otimes \id_A) \lambda^{O}$ and $\rho^H = (\id_A \otimes \pi) \rho^{O}$.
\end{enumerate}
\end{definition}

Other instances of bialgebras that coact linearly and universally on algebraic structures in a transposed manner include the universal bi/Hopf algebras that coact on skew-polynomial algebras in \cite{Takeuchi, AST} (these are special cases of the construction in \cite{Manin}),  and the universal bi/Hopf algebras that coact on a superpotential algebras in \cite{CWW} (this is a generalization of the construction in \cite{Manin}).

\smallskip

In order to study the quantum symmetries of an algebra $A$ which satisfies Hypothesis~\ref{hyp:A}, but is not necessarily connected,
we use coactions of weak bialgebras, which are structures that have the underlying structure of an algebra and a coalgebra, with weak compatibility conditions between these substructures [Definition~\ref{def:wba}]. For a weak bialgebra $H$, there are two important coideal subalgebras, $H_s$ and $H_t$, called the source and target counital subalgebras, that measure how far $H$ is from being a bialgebra. Namely, $H$ is a bialgebra if and only if both $H_s$ and $H_t$ are the ground field $\kk$. These subalgebras are always separable and Frobenius (see Proposition~\ref{prop:Hs-facts}\ref{itm:fd-sep}).

\smallskip

Since we are considering quantum symmetries of algebras $A$ whose degree 0 components $A_0$ are commutative separable algebras, we will work within the framework of weak bialgebras with commutative counital subalgebras, which are the same as {\it $\mathcal{V}$-face algebras} by \cite[Theorem~4.3]{Schauenburg-xR} and \cite[Theorems~5.1 and~5.5]{Schauenburg-wha}. Here, $\mathcal{V}$ is a finite set. 
A key example of a $\mathcal{V}$-face algebra is the weak bialgebra $\hay$ attached to a finite quiver $Q$, which was introduced by Hayashi in \cite{Hayashi93,Hayashi96}.  In this case, $\mathcal{V} = Q_0$ and a presentation of $\hay$ is provided in  Example~\ref{ex:hay}. Next, we propose a conjecture, which is a modification of   \cite[Proposition~2.1]{Hayashi99b} that remains unproved. 

\begin{conjecture}
Suppose that  $\kk$ is algebraically closed. If $H$ is a finite-dimensional weak bialgebra with commutative counital subalgebras, then $H$ is isomorphic to a weak bialgebra quotient of $\hay$ for some finite quiver $Q$.
\end{conjecture}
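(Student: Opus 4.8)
The plan is to realize $H$ as a weak bialgebra quotient of $\hay$ for a quiver $Q$ built from the coalgebra of $H$: I would produce a grading-preserving coaction of $H$ on the path algebra $\kk Q$ and then appeal to the universal property of the (right) UQSGd of $\kk Q$, which by the main theorem is $\hay$. By the cited results of Schauenburg, $H$ is a $\mathcal V$-face algebra for some finite set $\mathcal V$; in particular the subalgebra $\mathfrak h \subseteq H$ generated by $H_s$ and $H_t$ (the \emph{Cartan part} of $H$) is commutative and separable, hence --- as $\kk$ is algebraically closed --- a finite product of copies of $\kk$, with a canonical complete family of orthogonal idempotents. Set $Q_0 := \mathcal V$.

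First I would realize $H$ as the degree-$1$ component of a path algebra carrying an $H$-coaction. View $H$ as the right regular $H$-comodule (via $\Delta$); it is simultaneously an $\mathfrak h$-bimodule, and since $\mathfrak h$ is split semisimple this bimodule decomposes into blocks indexed by pairs of idempotents. Choosing a homogeneous basis of $H$ adapted to this decomposition produces a finite quiver $Q$ with vertex set $\mathcal V$ whose arrows are the chosen basis vectors, together with an identification of $\kk Q_1$ with $H$ as an $\mathfrak h$-bimodule-comodule and of the $H$-coaction on $\kk Q_0$ with the one given by the diagonal idempotents of $\mathfrak h$. Because $\kk Q \cong T_{\kk Q_0}(\kk Q_1)$ is the free algebra on this bimodule, these data extend uniquely and multiplicatively to a grading-preserving map $\rho\colon \kk Q \to \kk Q \otimes H$; one then checks that $\rho$ is a coaction of the weak bialgebra $H$, that is, satisfies the weak comodule-algebra axioms --- including the modified unit axiom governed by $\Delta(1_H)$ --- which is precisely where the $\mathcal V$-face hypothesis is used, since it forces every relevant idempotent to split so that $\rho$ takes the quiver-path form.

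Given $\rho$, the universal property of the UQSGd of $\kk Q$, which is $\hay$ by the main theorem, yields a unique weak bialgebra map $\pi\colon \hay \to H$ with $\rho = (\id_{\kk Q} \otimes \pi)\rho^{\hay}$, where $\rho^{\hay}\colon \kk Q \to \kk Q \otimes \hay$ is the canonical coaction. To finish, I would note that $\hay$ is generated as an algebra by the matrix coefficients of $\rho^{\hay}$ on $(\kk Q)_0$ and $(\kk Q)_1$, and that $\pi$ carries these to the corresponding matrix coefficients of $\rho$; under the identifications above the latter include the diagonal idempotents of $\mathfrak h$ together with all structure constants of $\Delta\colon H \to H \otimes H$. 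Since the coefficients of the right regular comodule of any weak bialgebra span it --- apply the counit to one tensor factor of $\Delta(h) = \sum h_{(1)} \otimes h_{(2)}$ --- these elements already span $H$, so $\pi$ is surjective and $H \cong \hay/\ker\pi$ as weak bialgebras.

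The main obstacle is the construction of $\rho$: the bookkeeping matching $H_s$, $H_t$, and the diagonal idempotents of $\mathfrak h$ against the vertex idempotents of $\kk Q$, and the verification that the multiplicative extension genuinely satisfies the weak comodule-algebra axioms rather than merely being a coalgebra map, are where the non-formal content lies --- the analogous step for ordinary bialgebras in \cite{Manin} being easier because there $\Delta(1) = 1 \otimes 1$. Everything after that is a direct application of the universal property together with the standard fact about regular comodules. As an alternative one could instead try to repair the proof of \cite[Proposition~2.1]{Hayashi99b} directly, but routing through the UQSGd of $\kk Q$ makes the surjectivity step transparent.
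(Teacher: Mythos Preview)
This statement appears in the paper as an open \emph{conjecture}; the authors explicitly say it ``remains unproved'' and give no argument. There is thus no proof in the paper to compare your proposal against---you are attempting an open problem.

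Your overall strategy is the natural one, and the endgame is sound: once a weak bialgebra map $\pi\colon \hay \to H$ intertwining the coactions exists, its image contains all matrix coefficients of the right regular $H$-comodule, and these span $H$ by counitality, so $\pi$ is surjective. The base condition $\kk Q_0 \cong H_s$ in $\AH$ also follows readily from the face-idempotent axioms once the degree-$0$ coaction is taken to be $e_j \mapsto \sum_i e_i \otimes e^i_j$, as you indicate.

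The genuine gap is exactly where you place it, and it is more than bookkeeping. Extending $\rho$ from $\kk Q_0 \oplus \kk Q_1$ to $T_{\kk Q_0}(\kk Q_1)$ via the universal property of the tensor algebra requires that $\rho_1\colon \kk Q_1 \to \kk Q \otimes H$ be a $\kk Q_0$-bimodule map for the bimodule structure on $\kk Q \otimes H$ induced by $\rho_0$. Writing $\rho_0(e_i) = \sum_k e_k \otimes e^k_i$ and $\Delta(h_p) = \sum_q h_q \otimes z_{q,p}$ in your chosen basis, the left-module condition unwinds to the family of identities $e^{s(q)}_{\,i}\, z_{q,p} = \delta_{i,s(p)}\, z_{q,p}$ in $H$ (with a target-side analogue). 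These involve the \emph{left} action of $H_s$-idempotents on the structure constants of $\Delta$, which is not controlled by the $\mathfrak{h}$-bimodule block of $h_p$ alone; whether they follow from the $\mathcal{V}$-face axioms for an arbitrary adapted basis is precisely the substantive step your outline does not supply, and presumably why the statement remains conjectural. What you have is a reasonable plan of attack, not yet a proof.
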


This is akin to the result that every finite-dimensional algebra over an algebraically closed field is isomorphic to a  quotient of a path algebra of some finite quiver (see, e.g., \cite[Theorem~II.3.7]{ASS2006}).



\smallskip

Returning to the study of the quantum symmetries of algebras $A$ as in Hypothesis~\ref{hyp:A}, we proceed by realizing such an algebra $A$ as a comodule algebra over a weak bialgebra $H$ (which will eventually have commutative counital subalgebras). 
For a weak bialgebra $H$, let ${}^{H}\hspace{-.04in}\mathcal{A}$ (resp., $\mathcal{A}^H$) denote the category of left (resp., right) $H$-comodule algebras. An example of an object in ${}^{H}\hspace{-.04in}\mathcal{A}$ (resp., in $\mathcal{A}^H$) is $H_t$ (resp., $H_s$) via comultiplication  [Examples~\ref{ex:right} and~\ref{ex:left}]. Moreover, if an algebra $A$ satisfying Hypothesis~\ref{hyp:A} belongs to $\HA$ (resp., $\AH$), then so does the subalgebra $A_0$ [Remark~\ref{rem:A0}]. We are now ready to introduce various notions of a universal weak bialgebra coacting on $A$ which are the focus of our work.

\begin{definition}[left UQSGd, $\Oleft(A)$; right UQSGd, $\Oright(A)$;  trans. \hspace{-.1in} UQSGd, $\Otrans(A)$] \label{def:UQSGd-intro}
Let $A$ be $\kk$-algebra as in Hypothesis~\ref{hyp:A}.
\begin{enumerate}[(a),font=\upshape]
    \item \label{itm:leftUQSGd-intro} Let $\mathcal{O}:=\Oleft(A)$ be a weak bialgebra  so that $A \in {}^{\mathcal{O}}\hspace{-.04in}\mathcal{A}$  via left $\mathcal{O}$-comodule map $\lambda^{\mathcal{O}}$ with $\mathcal{O}_t \cong A_0$ in ${}^{\mathcal{O}}\hspace{-.04in}\mathcal{A}$. We call $\Oleft(A)$ the {\it left universal quantum linear semigroupoid (left UQSGd) of $A$} if, for any weak bialgebra $H$ such that $A \in \HA$ via left $H$-comodule map $\lambda^H$ with $H_t \cong A_0$ in $\HA$, there exists a unique weak bialgebra map  $\pi: \mathcal{O} \to  H$ so that $\lambda^H = (\pi \otimes \id_A) \lambda^{\mathcal{O}}$.

\medskip

    \item \label{itm:rightUQSGd-intro} Let $\mathcal{O}:=\Oright(A)$ be a weak bialgebra so that $A \in \mathcal{A}^{\mathcal{O}}$  via right $\mathcal{O}$-comodule map $\rho^{\mathcal{O}}$ with $\mathcal{O}_s \cong A_0$ in $\mathcal{A}^{\mathcal{O}}$. We call $\Oright(A)$ the {\it right universal quantum linear semigroupoid (right UQSGd) of $A$} if, for any weak bialgebra $H$ such that $A \in \AH$ via right $H$-comodule map $\rho^H$ with $H_s \cong A_0$ in $\AH$, there exists a unique weak bialgebra map  $\pi: \mathcal{O} \to  H$ so that $\rho^H = (\id_A \otimes \pi) \rho^{\mathcal{O}}$.

\medskip

\item \label{itm:ManinUQSGd-intro} Let $\mathcal{O}:=\Otrans(A)$ be a weak bialgebra so that $A \in {}^{\mathcal{O}}\hspace{-.04in}\mathcal{A}$  and $A \in \mathcal{A}^{\mathcal{O}}$   so that $A$ is a transposed $\mathcal{O}$-comodule algebra, and with  $\mathcal{O}_t \cong A_0$ in ${}^{\mathcal{O}}\hspace{-.04in}\mathcal{A}$ and  $\mathcal{O}_s \cong A_0$ in $\mathcal{A}^{\mathcal{O}}$.
We call $\Otrans(A)$ the {\it transposed universal quantum linear semigroupoid (transposed UQSGd) of $A$} if, for any weak bialgebra $H$ such that $A \in \HA$ and $A \in \AH$ for which $A$ is a transposed $H$-comodule algebra, and with  $H_t \cong A_0$ in $\HA$ and  $H_s \cong A_0$ in $\AH$, there exists a unique weak bialgebra map  $\pi: \mathcal{O} \to  H$ so that $\lambda^H = (\pi \otimes \id_A) \lambda^{\mathcal{O}}$ and $\rho^H = (\id_A \otimes \pi) \rho^{\mathcal{O}}$.
\end{enumerate}
\end{definition}

\medskip

Discussion about these definitions is provided in Remarks~\ref{rem:naive}, \ref{rem:A0fd-sep}--\ref{rem:gen-Manin-oneside}, \ref{rem:A0comm}--\ref{rem:uniqueness}; the most important observation is that, without the condition that the `base' of the weak bialgebra is isomorphic to the `base' of the comodule algebra, such universal weak bialgebras are not likely to exist [Remark~\ref{rem:naive}]. This brings us to our main result.

\begin{theorem} \label{thm:main-intro}
For a finite quiver $Q$, the UQSGds $\Oleft(\kk Q)$, $\Oright(\kk Q)$, and $\Otrans(\kk Q)$ of the path algebra $\kk Q$ exist, and each is isomorphic to Hayashi's face algebra $\hay$ as weak bialgebras.
\end{theorem}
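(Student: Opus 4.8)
The plan is to establish Theorem~\ref{thm:main-intro} by verifying two things for each of the three UQSGd variants: (i) that Hayashi's face algebra $\hay$ itself satisfies the defining hypotheses of the relevant UQSGd — i.e., that $\kk Q$ is an $\hay$-comodule algebra (on the left, on the right, and transposed) with the correct base identification $\hay_t \cong (\kk Q)_0 \cong \hay_s$ — and (ii) that $\hay$ enjoys the universal property: any competing weak bialgebra $H$ coacting linearly on $\kk Q$ with base $H_t \cong (\kk Q)_0$ (resp.\ $H_s$, resp.\ both) receives a unique weak bialgebra map $\hay \to H$ intertwining the coactions. Since $\hay$ has a concrete presentation by generators and relations (Example~\ref{ex:hay}), the natural route for (ii) is a generators-and-relations argument: define $\pi$ on the generators of $\hay$ using the structure constants of $\lambda^H$ (resp.\ $\rho^H$), check the defining relations of $\hay$ are respected, and check $\pi$ is a map of weak bialgebras.

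First I would set up the coaction of $\hay$ on $\kk Q$ explicitly. The grading of $\kk Q$ by path length reduces everything to the coaction on $(\kk Q)_0 = \kk Q_0$ and on $(\kk Q)_1 = \kk Q_1$, since $\kk Q \cong T_{\kk Q_0}(\kk Q_1)$ is the tensor algebra and a linear coaction on a tensor algebra is determined by its restriction to the base and degree-one piece (this is the weak-bialgebra analogue of the classical fact underlying Manin's construction; I would need the version for comodule algebras over weak bialgebras, presumably developed earlier in the paper). On the degree-zero part, the identification $\hay_t \cong \kk Q_0$ pins down the coaction on the idempotents $e_i$; on the degree-one part, $\hay$ has generators indexed by pairs of parallel-type arrows, and the coaction $\lambda: p \mapsto \sum (\text{generator}) \otimes (\text{arrow})$ is the tautological one. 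The compatibility of this coaction with the multiplication $m(p\otimes q) = \delta_{t(p),s(q)}pq$ of $\kk Q$ is precisely what the defining relations of $\hay$ encode — this is essentially Hayashi's original observation — so step (i) should be a matter of carefully matching conventions (left vs.\ right, and the transpose identification in Definition~\ref{def:ManinUQSG-intro}\ref{itm:transpose-intro}).

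For the universality step (ii), given $H$ with $H_t \cong (\kk Q)_0$ and a linear left coaction $\lambda^H$, I would write $\lambda^H(e_i) = \sum_j h_{ij}\otimes e_j$ and $\lambda^H(p) = \sum_q x^q_p \otimes q$ for scalars/elements $h_{ij}, x^q_p \in H$, define $\pi$ on the generators of $\hay$ by sending each generator to the corresponding $h_{ij}$ or $x^q_p$, and then verify: (a) $\pi$ kills the relations of $\hay$, which follows because those relations are exactly the constraints imposed on $\{h_{ij}, x^q_p\}$ by the comodule-algebra axioms (coassociativity, counitality, multiplicativity of $\lambda^H$) together with $H_t\cong(\kk Q)_0$; (b) $\pi$ is multiplicative and comultiplicative and respects counits — comultiplicativity because the structure constants of $\hay$'s comultiplication are again tautological, counit because the counit of $\hay$ reads off the "diagonal" and $\lambda^H$ is counital; (c) uniqueness, because $\hay$ is generated as an algebra by the elements in the image of the coaction, so $\pi$ is forced on generators. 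The transposed case adds the constraint that $\pi$ simultaneously intertwines $\lambda^H$ and $\rho^H$; but for $\kk Q$ the left and right coactions of $\hay$ are already transpose to one another (the generator indexed by $(p,q)$ and the one indexed by $(q,p)$ play dual roles), so $\Otrans(\kk Q)$ and $\Oleft(\kk Q)$ end up with the same presentation, giving $\Otrans(\kk Q)\cong\hay\cong\Oleft(\kk Q)$.

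The main obstacle I anticipate is \emph{not} the universal property itself but rather the bookkeeping that makes the base identification $\hay_t \cong (\kk Q)_0$ (resp.\ $\hay_s$, resp.\ both) genuinely part of the data and genuinely forces the relations: one has to show that the relations of $\hay$ involving the idempotent-like generators are equivalent to the statement "$H_t \cong (\kk Q)_0$ as a comodule algebra," not merely implied by it, so that $\pi$ is well-defined with no missing relations. Closely related is verifying that $\hay$ is the \emph{initial} such object and not just \emph{a} such object — i.e., that no strictly larger weak bialgebra could also work — which again comes down to the generation statement: every weak bialgebra coacting linearly on $\kk Q$ compatibly with the base is a quotient of $\hay$, so $\hay$ sits at the top. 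I would also need to be careful that the tensor-algebra reduction is valid over a non-field base $A_0$ and in the weak-bialgebra setting, since the usual "freeness of $T(V)$" argument must be adapted to $A_0$-bimodules and $\hay_t$-comodule structures; I expect this adaptation to be the most technical part of writing out a complete proof.
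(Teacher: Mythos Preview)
Your proposal is correct and follows essentially the same route as the paper: define $\pi$ on generators by reading off the coefficients of $\lambda^H$, verify the relations of $\hay$ via the comodule-algebra axioms together with the base identification, check $\pi$ is a weak bialgebra map, and deduce the transposed case from the left and right cases (the paper does this last step via Proposition~\ref{prop:left=right}). The obstacle you single out --- that the full idempotent relation $y_{i,j}y_{k,\ell}=\delta_{i,k}\delta_{j,\ell}y_{i,j}$ does not follow from the left coaction alone but genuinely requires the comodule-algebra isomorphism $H_t\cong\kk Q_0$ --- is exactly where the paper concentrates its effort (Proposition~\ref{prop:wba-idempotent-L}); your worry about a tensor-algebra reduction over a non-field base is unnecessary, since the paper works directly with the generators $e_i$ and $p$ without invoking such an abstraction.
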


For example, if we take $A$ to be the (connected, graded) free algebra $\kk \langle t_1, \dots, t_n \rangle$, i.e., the path algebra on the $n$-loop quiver $Q_{n\text{-loop}}$, then the UQSGds of $A$ are the classical UQSGs of Definitions~\ref{def:UQSG-intro} and \ref{def:ManinUQSG-intro}, and 
$$O^{\textnormal{left}}(A) \;\cong\; O^{\textnormal{right}}(A)\; \cong \;O^{\textnormal{trans}}(A)\; \cong\; \mathfrak{H}(Q_{n\text{-loop}});$$
see Example~\ref{ex: Manincase}. But these isomorphisms need not hold if $A$ is a proper quotient of $\kk \langle t_1, \dots, t_n \rangle$ [Example~\ref{ex:polynomialManin}]. In general, we have the following results for UQSGds of graded quotient algebras of~$\kk Q$.

\begin{proposition} \label{prop:quotient-intro}
Let $I \subseteq \kk Q$ be a graded ideal which is generated in degree $2$ or greater. If $\Ocal^*(\kk Q/I)$ exists, then we have an isomorphism of weak bialgebras,
\[\Ocal^*(\kk Q/I)\; \cong \; \hay/\mathcal{I},
\]
for some biideal $\mathcal{I}$ of $\hay$. Here,  $*$ means `left', `right', or `trans'. 
\end{proposition}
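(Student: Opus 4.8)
The plan is to build on Theorem~\ref{thm:main-intro}, which identifies each UQSGd of $\kk Q$ with Hayashi's face algebra $\hay$. The main idea is that a linear coaction of a weak bialgebra $H$ on the quotient $A = \kk Q / I$ (with $H_t \cong A_0$, or $H_s \cong A_0$, depending on the variance) should lift canonically to a linear coaction on $\kk Q$ itself, because $I$ is generated in degrees $\ge 2$. Concretely, since $I$ is a graded ideal generated in degree $2$ or greater, we have $(\kk Q/I)_0 = (\kk Q)_0 = \kk Q_0$ and $(\kk Q/I)_1 = (\kk Q)_1 = \kk Q_1$; in other words, the quotient map $\kk Q \to \kk Q/I$ is an isomorphism in degrees $0$ and $1$. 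Since $\kk Q$ is the tensor algebra $T_{\kk Q_0}(\kk Q_1)$, any $H$-comodule structure on the $\kk Q_0$-bimodule $\kk Q_1$ (compatible with the $H$-comodule structure on $\kk Q_0 = A_0$) extends uniquely to a linear $H$-comodule algebra structure on all of $\kk Q$. A coaction on $A = \kk Q/I$ restricts to such data in degrees $0$ and $1$, hence induces a coaction on $\kk Q$.

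First I would make precise the passage from a coaction on $A$ to a coaction on $\kk Q$: given $H$ with $A \in {}^{H}\hspace{-.04in}\mathcal{A}$ (say, in the left case) via $\lambda^H$ with $H_t \cong A_0$, restrict $\lambda^H$ to degrees $0$ and $1$ to obtain an $H$-comodule structure on $\kk Q_0$ and $\kk Q_1$, and use the tensor-algebra universal property to get a linear $H$-comodule algebra structure $\widetilde{\lambda}^H$ on $\kk Q$ with $H_t \cong (\kk Q)_0$. By Theorem~\ref{thm:main-intro}, $\Oleft(\kk Q) \cong \hay$ exists, so there is a unique weak bialgebra map $\pi \colon \hay \to H$ with $\widetilde{\lambda}^H = (\pi \otimes \id)\lambda^{\hay}$. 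Next I would show that the ideal $I$ being a subcomodule of $\kk Q$ forces $\pi$ to kill a fixed biideal: let $\mathcal{I}$ be the sum, over all $H$ as above, of the kernels $\ker \pi$ — or, better, exhibit a single biideal $\mathcal{I} \subseteq \hay$ by a direct construction (the biideal generated by the "matrix entries" of $\hay$ detecting the relation space $I_2 \subseteq (\kk Q)_2$), and check that $\hay/\mathcal{I}$ coacts linearly on $A$ with base $A_0$. Then verify the universal property of $\hay/\mathcal{I}$ directly: every $\pi$ as above factors through $\hay/\mathcal{I}$ because the coaction $\widetilde\lambda^H$ descends to $A$, which means $\pi$ annihilates exactly the obstruction to $I$ being a subcomodule, i.e. $\mathcal{I}$.

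The cleanest route, though, is probably just a uniqueness argument: assume $\Ocal^*(\kk Q/I)$ exists and call it $\mathcal{O}$. The universal coaction of $\mathcal{O}$ on $A$ lifts (as above) to a coaction of $\mathcal{O}$ on $\kk Q$, hence by Theorem~\ref{thm:main-intro} there is a weak bialgebra map $f \colon \hay \to \mathcal{O}$. Conversely, $\hay = \Ocal^*(\kk Q)$ coacts on $\kk Q$, and if one shows this coaction descends to $A$ modulo a biideal $\mathcal{I}$ — precisely because the relations of $I$ are detected by elements of $\hay$ generating a biideal — then $\hay / \mathcal{I}$ coacts universally among weak bialgebras whose coaction on $\kk Q$ kills $I$, which by the lifting correspondence is the same as coacting on $A$; so by universality of $\mathcal{O}$ there is a map $g \colon \mathcal{O} \to \hay/\mathcal{I}$, and $\hay \twoheadrightarrow \hay/\mathcal{I}$. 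Chasing these through the relevant universal properties gives $\mathcal{O} \cong \hay/\mathcal{I}$.

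The main obstacle is the lifting step: showing that a linear $H$-comodule algebra structure on $A = \kk Q/I$ with $H_t \cong A_0$ genuinely lifts to $\kk Q$, and that this lift is compatible with weak bialgebra maps. The subtlety is that the tensor-algebra universal property must be applied inside the monoidal category of $H$-comodules (using Hypothesis~\ref{hyp:linear-intro} so that each graded piece is an $H$-comodule), and one must check that the resulting coaction on $\kk Q$ is counital and coassociative with the correct source/target subalgebra identification — this uses the separability and Frobenius properties of $A_0 \cong H_t$. Once the lift is established, identifying $\mathcal{I}$ explicitly (rather than as an indexed sum of kernels) and checking it is a \emph{biideal} (closed under $\Delta$) is the remaining bookkeeping; this should follow because the defining elements of $\mathcal{I}$ come from a subcomodule $I_2$ of $(\kk Q)_2 = \bigoplus_{a,b} \kk\, ab$, and the comultiplication of $\hay$ acts on these "path-pair" generators in a way that preserves the span of the relation tensors.
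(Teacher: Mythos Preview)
Your approach is correct in outline but more elaborate than the paper's. The paper does not lift the coaction from $\kk Q/I$ back to $\kk Q$ in order to invoke Theorem~\ref{thm:main-intro} as a black box; instead it re-runs the argument of Theorem~\ref{thm:main-intro} directly with $\kk Q/I$ in place of $\kk Q$. Concretely, writing the universal coaction of $\mathcal{O} := \mathcal{O}^*(\kk Q/I)$ in terms of matrix coefficients $y_{i,j}, y_{p,q} \in \mathcal{O}$ (which is possible since degrees $0$ and $1$ are unchanged), the paper observes that the structural identities of Lemma~\ref{lem:wba-structure} and Proposition~\ref{prop:wba-idempotent-L} still hold verbatim, and hence the assignment $x_{i,j} \mapsto y_{i,j}$, $x_{p,q} \mapsto y_{p,q}$ extends to a weak bialgebra map $\pi: \hay \to \mathcal{O}$. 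The biideal is then simply $\mathcal{I} = \ker \pi$. The key shortcut you are missing is surjectivity: the paper gets it in one line from Lemma~\ref{lem:O-in-faith} (the universal coaction of $\mathcal{O}$ is inner-faithful, so $\mathcal{O}$ is generated by the $y_{i,j}, y_{p,q}$, hence $\pi$ is onto). This entirely avoids your two-sided universal-property chase and the need to construct $\mathcal{I}$ explicitly or to verify that $\hay/\mathcal{I}$ itself satisfies the universal property of $\mathcal{O}$. Your lifting step, which you flag as the main obstacle, is thus not needed at all; the computations it would require are exactly those already packaged in Lemma~\ref{lem:wba-structure} and Proposition~\ref{prop:wba-idempotent-L}.
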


Finally, in the case when $I$ is generated in degree 2, i.e., when $\kk Q/I$ is {\it quadratic} [Definition~\ref{def:quaddual}], we establish a non-connected generalization of \cite[Theorem~5.10]{Manin}. The {\it quadratic dual} $(\kk Q/I)^!$ of the quadratic algebra $\kk Q/I$ is reviewed in Definition~\ref{def:quaddual}.

\begin{theorem} \label{thm:quad-intro}
If the quotient algebra $\kk Q/I$ is quadratic, then we have that
\begin{enumerate} [(a),font=\upshape] 
    \item  $\Oleft(\kk Q/I) \cong  \Oright((\kk Q/I)^!)^{\op}$,
    \smallskip
    \item $\Oright(\kk Q/I) \cong \Oleft((\kk Q/I)^!)^{\op}$,
    \smallskip
    \item $\Oleft(\kk Q/I) \cong \Oright(\kk Q/I)^{\cop}$,
    \smallskip
    \item $\Otrans(\kk Q/I) \cong \Otrans((\kk Q/I)^!)^{\op}$,
\end{enumerate}
as weak bialgebras.
\end{theorem}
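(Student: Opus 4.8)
The plan is to derive all four isomorphisms from natural bijections between the relevant categories of linear (grading-preserving) comodule-algebra structures over weak bialgebras, and then to read off the isomorphisms from the universal properties of Definition~\ref{def:UQSGd-intro}; as each such bijection also transfers existence, one is free to argue on whichever side of a given isomorphism is convenient. Write $B = \kk Q_0$ and $V = \kk Q_1$, so that $\kk Q/I = T_B(V)/(R)$ for a sub-bimodule $R \subseteq V \otimes_B V$ and, following Definition~\ref{def:quaddual}, $(\kk Q/I)^! = T_B(V^\vee)/(R^\perp)$, where $V^\vee = \operatorname{Hom}_{(B,B)}(V,B)$ is the dual bimodule (supported on the opposite quiver $Q^{\op}$) and $R^\perp$ is the annihilator of $R$ under the canonical identification $V^\vee \otimes_B V^\vee \cong (V \otimes_B V)^\vee$. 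We will use that this identification \emph{reverses the order of the tensor factors}, that $(V^\vee)^\vee \cong V$ and $(R^\perp)^\perp = R$, and the basic reduction: for any weak bialgebra $H$ with $H_t \cong B$ and $H_s \cong B$, a linear left $H$-comodule algebra structure on $T_B(V)/(R)$ is the same data as a left $H$-comodule structure $\delta\colon V \to H \otimes_B V$ (compatible with the $B$-bimodule structure) whose induced map on $V \otimes_B V$ carries $R$ into $H \otimes_B R$, with the analogous statement for right coactions.

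Part~(c) is immediate and, unlike the others, uses neither quadraticity nor the prior existence of both sides: composing a linear left $H$-comodule algebra structure $\lambda\colon A \to H \otimes A$ with the flip gives a linear right $H^{\cop}$-comodule algebra structure on $A$, and since $(-)^{\cop}$ is an involution on weak bialgebras exchanging the source and target counital subalgebras, $H \mapsto H^{\cop}$ is a bijection, natural in $H$, between $\{A \in \HA : H_t \cong A_0\}$ and $\{A \in \mathcal{A}^{H^{\cop}} : (H^{\cop})_s \cong A_0\}$; hence $\Oleft(A)^{\cop}$ has the universal property of $\Oright(A)$. For part~(a), dualize $\delta$ in the $V$-slot: its transpose is a linear right $H^{\op}$-comodule structure on $V^\vee$. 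The appearance of $H^{\op}$ (rather than $H$) is forced by the order-reversal in $V^\vee \otimes_B V^\vee \cong (V \otimes_B V)^\vee$---dualizing a tensor product over the base $\kk Q_0$ makes the multiplication of the weak bialgebra pick up this reversal---and, because $(R^\perp)^\perp = R$, the condition ``$\delta$ preserves $R$'' is equivalent to ``its transpose preserves $R^\perp$''; tracking the counital subalgebras gives $(H^{\op})_s \cong B \cong (A^!)_0$. By the basic reduction and its mirror, this yields a bijection, natural in $H$, between linear left $H$-comodule algebra structures on $\kk Q/I$ with $H_t \cong A_0$ and linear right $H^{\op}$-comodule algebra structures on $(\kk Q/I)^!$ with $(H^{\op})_s \cong (A^!)_0$, so $\Oright((\kk Q/I)^!)^{\op}$ has the universal property of $\Oleft(\kk Q/I)$; that is (a). Applying (a) to $(\kk Q/I)^!$ and using $((\kk Q/I)^!)^! \cong \kk Q/I$ gives (b).

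For part~(d), a transposed $H$-comodule algebra structure on $A$ is a linear left structure $\lambda^H$ together with a linear right structure $\rho^H$ intertwined by the transpose map built from $\mathrm{ev}_A$ and $\mathrm{coev}_A$ as in Definition~\ref{def:UQSGd-intro}\ref{itm:ManinUQSGd-intro}. Combining the bijection of (a) with its mirror, the left structure on $A$ dualizes to a right structure on $A^!$ and the right structure on $A$ to a left structure on $A^!$, both over $H^{\op}$; the crucial verification is that the intertwining condition for $(\lambda^H,\rho^H)$ on $A$ transforms precisely into the intertwining condition for the dualized pair on $A^!$, i.e., that the transpose of Definition~\ref{def:UQSGd-intro}\ref{itm:ManinUQSGd-intro} is compatible with quadratic duality. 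This gives a natural bijection between transposed $H$-comodule algebra structures on $A$ and transposed $H^{\op}$-comodule algebra structures on $A^!$, and (d) follows from the universal properties. (Alternatively, all four parts can be verified from presentations: by Proposition~\ref{prop:quotient-intro} each side is an explicit quotient of a Hayashi face algebra; there is a weak bialgebra isomorphism $\mathfrak{H}(Q^{\op}) \cong \hay^{\op}$ induced by reversing paths---the comultiplication $\Delta(x^{ab}) = \sum_c x^{ac} \otimes x^{cb}$ is unchanged, only the product reverses---and one checks the defining biideals match under it using $(R^\perp)^\perp = R$.)

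The main obstacle is not the linear algebra, which over $B = \kk Q_0$ mimics the connected case, but the bookkeeping proper to the weak bialgebra setting: one must check carefully that the conditions $H_t \cong A_0$ and $H_s \cong A_0$ transform correctly under $(-)^{\op}$ and $(-)^{\cop}$ (so that the universal objects on the two sides truly correspond), that the order-reversal in the dual of a tensor product over $\kk Q_0$ is exactly what converts a left $H$-comodule algebra structure into a right $H^{\op}$-comodule algebra structure on the quadratic dual, and---for (d)---that the transpose compatibility of Definition~\ref{def:UQSGd-intro}\ref{itm:ManinUQSGd-intro} is genuinely self-dual under quadratic duality. This last point is the substantive content generalizing Manin's \cite[Theorem~5.10]{Manin} past the connected case.
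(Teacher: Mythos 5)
Your proposal is correct in its architecture, but it takes a genuinely different route from the paper. The paper proves the theorem by brute-force presentation matching: Proposition~\ref{prop:quotient-intro} exhibits each UQSGd as $\hay/\mathcal{I}$ (or $\mathfrak{H}(Q^{\op})/\mathcal{I}$), the biideal $\mathcal{I}$ is computed explicitly in terms of the structure constants $c^{[\alpha]}_{i,j}$ of $I$ and $d^{[\beta]}_{k,\ell}$ of $I^\perp_{\op}$ as exactly the relations forced by ``the coaction preserves $R$,'' and the isomorphisms in (a) and (c) are then read off by matching generators ($x_{p_i,p_k}\mapsto x_{p_i^*,p_k^*}$, resp.\ $x_{p_i,p_k}\mapsto x_{p_k,p_i}$), with (b) and (d) left as ``similar.'' This is essentially the parenthetical alternative you mention at the end. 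Your main argument instead transports comodule-algebra structures along $(-)^{\op}$ and $(-)^{\cop}$ and quadratic duality, and invokes the universal property directly; this buys a cleaner treatment of existence (which Proposition~\ref{prop:quotient-intro} only handles conditionally), makes (b) a formal consequence of (a) via $(A^!)^!\cong A$, and isolates where the $\op$ comes from (the order reversal in $(V\otimes_B V)^\vee\cong V^\vee\otimes_B V^\vee$), at the cost of having to verify naturality and the counital-subalgebra bookkeeping abstractly. Two points deserve more care than you give them: your ``basic reduction'' silently contains the content of Lemma~\ref{lem:wba-structure} and Propositions~\ref{prop:wba-idempotent-L}--\ref{prop:wba-idempotent-R} (it is the hypothesis $H_t\cong A_0$ in $\HA$ that pins down the degree-$0$ coaction and forces the face-algebra relations among the $y_{i,j}$, and the isomorphism of counital subalgebras must be checked to be one of \emph{comodule} algebras on both sides, which also uses commutativity of $\kk Q_0$); and for (d) you correctly identify, but do not carry out, the check that the transposed condition of Definition~\ref{def:transposecoaction} is self-dual --- it is, since dualizing sends the matrix pair $\bigl((z_{j,k}),(z_{k,j})\bigr)$ on $A_1$ to the pair $\bigl((z_{j,k}),(z_{k,j})\bigr)$ on $(A^!)_1$ with the roles of left and right exchanged, but one must also say why the condition propagates to the higher graded pieces of the quotient. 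Neither point is a fatal gap, but both are where the actual work lives.
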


See Remark~\ref{rem:N-homog} for a discussion of  universal quantum semigroupoids of $N$-homogeneous algebras; such algebras include quiver potential algebras.

\smallskip

The paper is organized as follows. We present background material and preliminary results on weak bialgebras, monoidal categories of corepresentations of weak bialgebras, and (examples of) comodule algebras over weak bialgebras in Section~\ref{sec:prelim}. We introduce the theory of universal quantum linear semigroupoids (of algebras as in Hypothesis~\ref{hyp:A}) in Section~\ref{sec:universal}, including Definition~\ref{def:UQSGd-intro}. Our main result, Theorem~\ref{thm:main-intro} on the UQSGds of path algebras, is established in Section~\ref{sec:main}. Examples and results about UQSGds of quotients of path algebras are presented in Section~\ref{sec:quotients}, including Proposition~\ref{prop:quotient-intro} and Theorem~\ref{thm:quad-intro}. We end by providing directions for future investigation on {\it universal quantum linear groupoids} (i.e., universal weak Hopf algebras) in Section~\ref{sec:UQGd}.

\medskip

\noindent {\bf Acknowledgements.}
The authors would like to thank Dmitri Nikshych for a helpful exchange about material in Section~\ref{sec:prelim}, Pavel Etingof for posing Question~\ref{ques:A0comm} and other interesting comments,  James Zhang for inspiring Question~\ref{ques:ring-hom}, and Fabio Calder\'{o}n for helpful questions and comments. 
C. Walton is supported by a research grant from the Alfred P. Sloan foundation and by NSF grant \#DMS-1903192 and 2100756. R. Won is supported by an AMS--Simons Travel Grant.


\section{Preliminaries} \label{sec:prelim}

In this section, we provide background material and preliminary results on weak bialgebras [Section~\ref{sec:wba}], and on corepresentation categories of weak bialgebras and algebras within them [Section~\ref{sec:corep}]. We end by providing crucial examples of comodule algebras over weak bialgebras [Section~\ref{sec:ex}].

\subsection{Weak bialgebras} \label{sec:wba}
To begin, recall that a {\it $\kk$-algebra} is a $\kk$-vector space $A$ equipped with a multiplication map $m: A \otimes A \to A$ and unit map $u: \kk \to A$ satisfying associativity and unitality  constraints. We reserve the notation 1 to mean $1:= 1_A:= u(1_\kk).$
A {\it $\kk$-coalgebra} is a $\kk$-vector space $C$ equipped with a comultiplication map $\Delta: C \to C \otimes C$ and counit map $\ep: C \to \kk$ satisfying coassociativity and counitality  constraints. If $(C, \Delta, \varepsilon)$ is a coalgebra, we use sumless Sweedler notation and write $\Delta(c):=c_1 \otimes c_2$   for $c\in C$.

\begin{definition} \label{def:wba}
A \textit{weak bialgebra} over $\kk$ is a quintuple $(H,m,u,\Delta, \varepsilon)$ such that
\begin{enumerate}[label=(\roman*)]
    \item $(H,m,u)$ is a $\kk$-algebra,
    \item $(H, \Delta, \varepsilon)$ is a $\kk$-coalgebra,
    \item \label{def:wba3} $\Delta(ab)=\Delta(a)\Delta(b)$ for all $a,b \in H$,
    \item \label{def:wba4} $\varepsilon(abc)=\varepsilon(ab_1)\varepsilon(b_2c)=\varepsilon(ab_2)\varepsilon(b_1c)$ for all $a,b,c \in H$,
    \item \label{def:wba5} $\Delta^2(1)=(\Delta(1) \otimes 1)(1 \otimes \Delta(1))=(1 \otimes \Delta(1))(\Delta(1) \otimes 1)$.
\end{enumerate}
\end{definition}

The difference between a bialgebra and a weak bialgebra can be understood as a weakening of the compatibility between the algebra and coalgebra structures. In a weak bialgebra, we still have that comultiplication is multiplicative (e.g.,  condition \ref{def:wba3}), but the counit is no longer multiplicative and we do not necessarily have $\Delta(1)=1 \otimes 1$ or $\varepsilon(1)=1$. Instead, we have weak multiplicativity of the counit (condition \ref{def:wba4}) and weak comultiplicativity of the unit (condition \ref{def:wba5}). 

\begin{definition}[$\varepsilon_s$, $\varepsilon_t$, $H_s$, $H_t$] \label{def:eps}
Let $(H, m, u, \Delta, \varepsilon)$ be a weak bialgebra. We define the {\it source and target counital maps}, respectively as follows:
\[
\begin{array}{ll}
    \varepsilon_s: H \to H, & x \mapsto 1_1\ep(x1_2) \\
    \ept: H \to H,  & x \mapsto \ep(1_1x)1_2.
\end{array}
\]
We denote the images of these maps as $H_s:=\eps(H)$ and $H_t:=\ept(H)$. We call $H_s$ the \emph{source counital subalgebra} and $H_t$ the \emph{target counital subalgebra} of $H$ (see \cref{prop:Hs-facts}).
\end{definition}

These subalgebras have special properties that we will need below. 

\begin{proposition} \label{prop:Hs-facts} 
Let $H$ and K be  weak bialgebras. The following statements hold. 
\begin{enumerate}[(a), font=\upshape]
\item \label{itm:fd-sep} $H_s$ and $H_t$ are separable Frobenius (so, finite-dimensional) $\kk$-algebras. 

\item \label{itm:eset} $\eps(y) = y$ for $y \in H_s$, \;and\;
$\ept(z) =  z$ for $z \in H_t$.

\item \label{itm:HsHtcommute} If $y \in H_s$ and $z \in H_t$, then $yz=zy$.

\item \label{itm:deltaHsht} $\Delta(y) = 1_1 \otimes y 1_2 =  1_1 \otimes 1_2y$ for $y \in H_s$, \;and\;
$\Delta(z) = 1_1 z \otimes 1_2 =  z 1_1 \otimes 1_2$ for $z \in H_t$.

\item \label{itm:coideal} $H_s$ (resp., $H_t$) is a left (resp., right) coideal subalgebra of $H$. We also have that
\[H_t=\{(\varphi \otimes \id)\Delta(1): \varphi \in H^*\}, \quad H_s=\{(\id \otimes \varphi)\Delta(1): \varphi \in H^*\}.\]
 
\item \label{itm:anti-iso}$\ep_t$ is an anti-isomorphism from $H_s$ to $H_t$, i.e.  $H_s\cong H_t^{\textnormal{op}}$ as $\kk$-algebras.
    
\item\label{itm:bialgebra} $H$ is a bialgebra if and only if $\dim_\kk H_s =1$, if and only if  $\dim_\kk H_t =1$.

 \item \label{itm:preservesubalg}  Any nonzero weak bialgebra morphism $\alpha: H \rightarrow K$ preserves counital subalgebras, i.e. $H_s \cong K_s$ and $H_t
\cong K_t$ as $\kk$-algebras.
\end{enumerate}
\end{proposition}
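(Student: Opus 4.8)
The plan is to prove Proposition~\ref{prop:Hs-facts} as a chain of bootstrapped identities from the axioms of Definition~\ref{def:wba}, importing exactly one external fact — the separable-Frobenius assertion in part~\ref{itm:fd-sep} — and deriving everything else from the axioms and from \ref{itm:fd-sep}. Throughout I would use sumless Sweedler notation with $\Delta(1)=1_1\otimes1_2$ and $\Delta^2(1)=1_1\otimes1_2\otimes1_3$, and I would begin by recording the two ``driver'' identities behind parts~\ref{itm:eset}--\ref{itm:coideal}: that $\varepsilon_s$ and $\varepsilon_t$ are idempotent, and that $\Delta(z)=1_1z\otimes1_2=z1_1\otimes1_2$ for $z\in H_t$, with the mirror statements for $y\in H_s$. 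Each is a few lines: for the second, expand $\Delta(\varepsilon_t(x))=\varepsilon(1_1x)\,\Delta(1_2)$, reassociate via axiom~\ref{def:wba5}, and collapse one leg with the counit; idempotency is analogous using axiom~\ref{def:wba4}.

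From these I would read off the structural parts. Idempotency identifies $H_t=\varepsilon_t(H)$ with the fixed-point set of $\varepsilon_t$, which is part~\ref{itm:eset}; the $\Delta$-formula for $z\in H_t$ is precisely part~\ref{itm:deltaHsht}, and it yields the coideal inclusions $\Delta(H_t)\subseteq H\otimes H_t$ and (mirror) $\Delta(H_s)\subseteq H_s\otimes H$ of part~\ref{itm:coideal}. Comparing the two factorizations of $\Delta^2(1)$ in axiom~\ref{def:wba5} against these formulas gives the commutation $yz=zy$ of part~\ref{itm:HsHtcommute}, together with the descriptions $H_t=\{(\varphi\otimes\id)\Delta(1):\varphi\in H^*\}$ and $H_s=\{(\id\otimes\varphi)\Delta(1):\varphi\in H^*\}$ of part~\ref{itm:coideal}; one inclusion in each is immediate from $\varepsilon_t(x)=(\varepsilon(\,\cdot\,x)\otimes\id)\Delta(1)$ and its mirror, and the reverse is a standard consequence of axioms~\ref{def:wba4} and~\ref{def:wba5}. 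Since $\varepsilon_t(x)=\varepsilon(1_1x)1_2$ lies in the span of the finitely many second tensor-legs of the fixed element $\Delta(1)\in H\otimes H$, we get $\dim_\kk H_t<\infty$, and symmetrically $\dim_\kk H_s<\infty$; the sharper claim that $H_s$ and $H_t$ are separable Frobenius algebras is the one fact I would take from the literature. Part~\ref{itm:anti-iso} then follows by checking on $H_s$ that $\varepsilon_t|_{H_s}$ reverses multiplication (using \ref{itm:HsHtcommute} and the $\Delta$-formulas) with two-sided inverse $\varepsilon_s|_{H_t}$. For part~\ref{itm:bialgebra}: if $\dim_\kk H_t=1$ then, since $1\in H_s\cap H_t$, both tensor-legs of $\Delta(1)$ are scalar multiples of $1$, and multiplicativity of $\Delta$ forces $\Delta(1)=1\otimes1$ and $\varepsilon(1)=1$, after which axiom~\ref{def:wba4} makes $\varepsilon$ multiplicative — so $H$ is a bialgebra; the $H_s$ case is symmetric, and part~\ref{itm:anti-iso} makes the two dimension conditions equivalent.

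The one part that needs an actual idea is part~\ref{itm:preservesubalg}. A nonzero weak bialgebra morphism $\alpha\colon H\to K$ is a unital algebra and counital coalgebra map, so $\alpha(1_H)=1_K$, $\varepsilon_K\circ\alpha=\varepsilon_H$, and $(\alpha\otimes\alpha)\Delta_H(1_H)=\Delta_K(1_K)$ (the nonzero hypothesis only rules out $K=0$). The key observation is that $\alpha$ \emph{intertwines the target counital maps}, $\alpha\circ\varepsilon_t^H=\varepsilon_t^K\circ\alpha$, by the one-line computation $\alpha(\varepsilon_t^H(x))=\varepsilon_H(1_{H,1}x)\,\alpha(1_{H,2})=\varepsilon_K(\alpha(1_{H,1})\alpha(x))\,\alpha(1_{H,2})=\varepsilon_K(1_{K,1}\alpha(x))\,1_{K,2}=\varepsilon_t^K(\alpha(x))$. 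Hence $\alpha(H_t)=\varepsilon_t^K(\alpha(H))\subseteq K_t$, so $\alpha|_{H_t}$ is an algebra map into $K_t$, and it is surjective because, by the description in part~\ref{itm:coideal}, each element $(\psi\otimes\id)\Delta_K(1_K)=(\psi\otimes\id)(\alpha\otimes\alpha)\Delta_H(1_H)=\alpha\big((\psi\alpha\otimes\id)\Delta_H(1_H)\big)$ lies in $\alpha(H_t)$. For injectivity I would invoke part~\ref{itm:fd-sep} in the sharp form that $(x,y)\mapsto\varepsilon_H(xy)$ is nondegenerate on $H_t$: if $z\in H_t$ and $\alpha(z)=0$, then $\varepsilon_H(zw)=\varepsilon_K(\alpha(z)\alpha(w))=0$ for every $w\in H_t$, whence $z=0$. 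Thus $\alpha|_{H_t}\colon H_t\xrightarrow{\ \sim\ }K_t$ is a $\kk$-algebra isomorphism, and the $H_s$ statement follows by the mirror argument with $\varepsilon_s$.

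The main obstacle I anticipate is organizational rather than conceptual: keeping the dependency order straight (the $\Delta(1)$-description of $H_t$ must precede the finiteness statement, and both must precede the isomorphism in part~\ref{itm:preservesubalg}), and, in part~\ref{itm:preservesubalg}, noticing that injectivity genuinely needs the nondegeneracy of the $\varepsilon$-form — a unital surjection of separable commutative algebras need not be injective without it (for instance $\kk\times\kk\twoheadrightarrow\kk$) — which is precisely why both the ``nonzero'' hypothesis and the separable-Frobenius property of part~\ref{itm:fd-sep} are indispensable here.
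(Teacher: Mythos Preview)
Your proposal is correct. For parts \ref{itm:fd-sep}--\ref{itm:bialgebra} the paper simply cites the literature (\cite{BCJ}, \cite{BNS}, \cite{NV}, \cite{Nik02}), whereas you sketch direct derivations from the axioms; the content is the same.

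The only substantive divergence is in part~\ref{itm:preservesubalg}. Your argument runs: show $\alpha\circ\varepsilon_t^H=\varepsilon_t^K\circ\alpha$, deduce $\alpha(H_t)\subseteq K_t$, get surjectivity from the description of $K_t$ in \ref{itm:coideal}, and get injectivity from nondegeneracy of the Frobenius form $(z,w)\mapsto\varepsilon_H(zw)$ on $H_t$ imported from \ref{itm:fd-sep}. The paper instead writes $\Delta_H(1_H)=\sum_{i=1}^n w_i\otimes z_i$ with both families linearly independent, proves $\varepsilon_H(w_iz_j)=\delta_{i,j}$ from \ref{itm:eset}, and extracts the numerical invariant $\dim_\kk H_s=n=\varepsilon_H\big((1_H)_1(1_H)_2\big)$; since $\alpha$ is a unital algebra and counital coalgebra map, this quantity equals $\varepsilon_K\big((1_K)_1(1_K)_2\big)=\dim_\kk K_s$. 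Surjectivity of $\alpha|_{H_s}$ then comes from \ref{itm:coideal} exactly as you do, and bijectivity follows by equality of (finite) dimensions. So the paper bypasses the Frobenius nondegeneracy entirely via a dimension count, whereas you invoke it explicitly; conversely, your intertwining identity $\alpha\circ\varepsilon_t^H=\varepsilon_t^K\circ\alpha$ is cleaner and makes the statement ``$\alpha$ preserves counital subalgebras'' literal rather than just dimension-matching. Both routes are short and valid.
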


\begin{proof}
\ref{itm:fd-sep} This follows from \cite[Corollary~4.4]{BCJ} and \cite[Proposition~2.11]{BNS}.

\smallskip

\ref{itm:eset}, \ref{itm:HsHtcommute}, \ref{itm:deltaHsht},  \ref{itm:coideal} These parts follow from 
\cite[Section~2.2]{BNS} and \cite[Propositions~2.2.1 and~2.2.2]{NV}.

\smallskip

\ref{itm:anti-iso} This is an immediate consequence of \cite[Propositions~1.15 and 1.18]{BCJ}.

\smallskip

\ref{itm:bialgebra} This is standard, and follows from \ref{itm:anti-iso} and  \cite[Definition~3.1, Remark~3.2]{Nik02}, for instance.

\smallskip

\ref{itm:preservesubalg}
The result for weak Hopf algebras is provided in \cite[Proposition~2.3.3]{NV}, and we generalize this to weak bialgebras as follows. Write $\Delta(1_H)=\sum_{i=1}^nw_i\otimes z_i$ with $\{w_i\}_{i=1}^n$ and $\{z_i\}_{i=1}^n$ linearly independent. By part~\ref{itm:coideal}, $H_s = \text{span}_\kk \{w_i\}_{i=1}^n$. Using the linear independence of $\{w_i\}_{i=1}^n$, we have
\begin{equation}\label{eq:n}
    n = \dim_\kk H_s.
\end{equation}  
Since
$z_{j} \overset{\textnormal{\ref{itm:eset}}}{=} \ep_{t}\left(z_{j}\right)\overset{\textnormal{\ref{def:eps}}}{=}\sum_{i=1}^{n} \ep\left(w_{i} z_{j}\right) z_{i}$ and $\{z_i\}_{i=1}^n$ are linearly independent, we also have
 \begin{equation}
 \label{ep:w_iz_j}
  \ep\left(w_{i} z_{j}\right)= \delta_{i,j}.   
 \end{equation}
Therefore
$$
\operatorname{dim}_\kk H_{s} \overset{\eqref{eq:n}}{=} n 
\overset{\eqref{ep:w_iz_j}}{=}  \textstyle \sum_{j=1}^n \ep_H(w_j z_j)
= \ep_H( (1_H)_1 \; (1_H)_2)
\overset{(*)}{=} \ep_K( (1_K)_1 \; (1_K)_2)
= \operatorname{dim}_\kk K_{s}.$$
Here, $(*)$ holds because the nonzero map $\alpha: H\to K$ is an algebra and a coalgebra map; that is, $1_K = u_K(1_\kk) = \alpha u_H (1_\kk) = \alpha(1_H)$ and 
$$\ep_K\; m_K \;\Delta_K(1_K)  ~=~  \ep_K \; m_K\; (\alpha \otimes \alpha) \; \Delta_H(1_H) ~=~  \ep_K \; \alpha\; m_H \; \Delta_H(1_H)  ~=~  \ep_H \; m_H \;\Delta_H(1_H).$$
Moreover, since $\alpha$ is a coalgebra map, \[
\textstyle \Delta(1_{K})=\sum_{i=1}^{n} \alpha(w_{i}) \otimes \alpha(z_{i}).
\]
By part~\ref{itm:coideal}, $K_{s}=\operatorname{span}\left\{\alpha(w_i)\right\},$ i.e., $\left.\alpha \right|_{H_{s}}: H_{s}\longrightarrow K_{s}$ is  a surjective algebra morphism. Thus, $\left. \alpha \right|_{H_{s}}$ is bijective. The proof for target subalgebras is similar. See also \cite[Lemma~6.3]{Schauenburg-wha} for an alternative proof.
 \end{proof}

In this paper, the main weak bialgebras of interest are the following examples due to Hayashi, see, e.g., \cite[Example~1.1]{Hayashi96}. Recall Notation~\ref{not:quiver}.

\begin{example}[Hayashi's face algebra attached to a quiver] 
\label{ex:hay}
For a finite quiver $Q$, we define the weak bialgebra $\mfH(Q)$ as follows. As a $\kk$-algebra, $$\mfH(Q) = \frac{\kk \l<x_{i,j}, x_{p,q} \mid i,j \in Q_0, \;  p,q \in Q_1\r>}{(R)},$$
for indeterminates $x_{i,j}$ and $x_{p,q}$ with relations $R$, given by:
\begin{equation}
\label{eq:multHQ1}
 x_{p,q} x_{p',q'} 
= \delta_{t(p), s(p')} \delta_{t(q), s(q')} x_{p,q} x_{p',q'} ,
\end{equation}
\begin{equation}
\label{eq:multHQ01}
 x_{s(p),s(q)}x_{p,q}=x_{p,q}=x_{p,q} x_{t(p),t(q)}, 
\end{equation}
for all $p,p',q,q' \in Q_1$, and
\begin{equation}
\label{eq:multHQ0}
 x_{i,j} x_{k,{\ell}} 
= \delta_{i,k} \delta_{j,{\ell}}  x_{i,j} 
\end{equation}
for all $i,j,k,{\ell} \in Q_0$. (In fact, \eqref{eq:multHQ1} follows from \eqref{eq:multHQ01} and \eqref{eq:multHQ0}.) Then $\hay$ is a unital $\kk$-algebra, with unit given by
\begin{align}
\label{eq:unitHQ}
1_{\hay} = \textstyle \sum_{i, j \in Q_0} x_{i,j}.
\end{align}
Let $k \geq 2$ and suppose that $p_1p_2\cdots p_k, \: q_1q_2\cdots q_k \in Q_k$, where each $p_i,q_i \in Q_1$. As shorthand, we define the symbols
\begin{equation}
\label{eq:symbHQ}
    x_{p_1\cdots p_k, q_1\cdots q_k}:=x_{p_1,q_1}x_{p_2,q_2}\cdots x_{p_k,q_k}.
\end{equation}
With this notation, as a vector space we can write
\[\hay= \textstyle \bigoplus_{\ell \geq 0} \bigoplus_{a,b \in Q_{\ell}} \kk x_{a,b}.\]
For $a,b \in Q_{\ell}$, the coalgebra structure is given by
\begin{equation}
\label{eq:coalgHQ}
    \Delta(x_{a,b}) =  \textstyle \sum_{c \in Q_{\ell}} x_{a,c} \otimes x_{c,b}
\quad \quad \text{and} \quad \quad \ep(x_{a,b}) = \delta_{a,b}.
\end{equation} 
It can be checked that this structure makes $\hay$  a weak bialgebra.
\end{example}

We record the following facts about $\hay$.

\begin{proposition}
\label{prop:epHQ}
Let $Q$ be a finite quiver.
\begin{enumerate}[(a),font=\upshape]
    \item \label{eq:epHQ} For $p_1, \dots, p_k, q_1, \dots, q_k \in Q_1$,
\begin{equation*}
\ep(x_{p_1,q_1}\cdots x_{p_k,q_k})\\
=\l(\delta_{t(p_1),s(p_2)} \cdots \delta_{t(p_{k-1}),s(p_k)}\r)\l(\delta_{t(q_1),s(q_2)} \cdots \delta_{t(q_{k-1}),s(q_k)}\r)\l(\delta_{p_1,q_1}\cdots \delta_{p_k,q_k}\r).
\end{equation*}
\item For each $j \in Q_0$, define
\[ a_j = \textstyle \sum_{i\in Q_0} x_{i,j} \quad \text{and} \quad a_j' = \textstyle \sum_{i \in Q_0} x_{j,i}.
\]
Then $\{a_j\}_{j \in Q_0}$ and $\{a_j'\}_{j\in Q_0}$ are complete sets of primitive orthogonal idempotents in $\hay$ called the `face idempotents' (see \cite{Hayashi93}).
\smallskip
\item \label{prop:epHQs} As $\kk$-vector spaces, $\mfH(Q)_s = \bigoplus_{j \in Q_0} \kk a_j$ and $\mfH(Q)_t = \bigoplus_{j \in Q_0} \kk a'_j$. 
\end{enumerate}
\end{proposition}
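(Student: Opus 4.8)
The plan is to prove the three parts in order, each building on the previous, using only the presentation of $\hay$ from Example~\ref{ex:hay} together with the general facts about counital subalgebras in Proposition~\ref{prop:Hs-facts}.

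For part~\ref{eq:epHQ}, I would argue by induction on $k$. The base case $k=1$ is just $\ep(x_{p,q}) = \delta_{p,q}$, which follows from \eqref{eq:coalgHQ} (reading arrows as paths of length~1). For the inductive step, the key tool is the weak multiplicativity of the counit, Definition~\ref{def:wba}\ref{def:wba4}, namely $\ep(abc) = \ep(ab_1)\ep(b_2c)$. Setting $a = x_{p_1,q_1}\cdots x_{p_{k-2},q_{k-2}}$, $b = x_{p_{k-1},q_{k-1}}$, and $c = x_{p_k,q_k}$, and expanding $\Delta(b)$ via \eqref{eq:coalgHQ} as $\sum_{r \in Q_1} x_{p_{k-1},r}\otimes x_{r,q_{k-1}}$ if $q_{k-1}$ is to be summed appropriately — more carefully, $\Delta(x_{p_{k-1},q_{k-1}}) = \sum_{r\in Q_1} x_{p_{k-1},r}\otimes x_{r,q_{k-1}}$ — one gets $\ep(x_{p_1,q_1}\cdots x_{p_k,q_k}) = \sum_{r \in Q_1} \ep(x_{p_1,q_1}\cdots x_{p_{k-2},q_{k-2}} x_{p_{k-1},r})\,\ep(x_{r,q_{k-1}}x_{p_k,q_k})$. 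Now $\ep(x_{r,q_{k-1}}x_{p_k,q_k})$ is computed from the $k=2$ case (which itself comes from applying \ref{def:wba4} once to length-1 generators, giving $\ep(x_{r,q_{k-1}}x_{p_k,q_k}) = \delta_{t(r),s(p_k)}\delta_{t(q_{k-1}),s(q_k)}\delta_{r,q_{k-1}}\delta_{p_k,q_k}$), so only $r = q_{k-1}$ survives and the factor $\delta_{t(q_{k-1}),s(q_k)}\delta_{p_k,q_k}$ appears; the remaining $\ep(x_{p_1,q_1}\cdots x_{p_{k-1},q_{k-1}})$ — with the needed $\delta_{t(p_{k-1}),s(p_k)}$ already extracted since $r=q_{k-1}$ forces it via the $\delta_{t(r),s(p_k)}$ — is handled by the induction hypothesis, and collecting the $\delta$'s yields the claimed formula. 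I would write out the $k=2$ case explicitly as a lemma-within-the-proof since it seeds everything.

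For part (b), I would first check orthogonality and idempotency directly from the multiplication relations \eqref{eq:multHQ0} and \eqref{eq:multHQ01}: $a_j a_{j'} = \sum_{i,i'} x_{i,j}x_{i',j'}$, and by \eqref{eq:multHQ1}/\eqref{eq:multHQ01} the term $x_{i,j}x_{i',j'}$ vanishes unless $t(i) = s(i')$ and $t(j) = s(j')$; but $i,i',j,j' \in Q_0$ are vertices (paths of length~0, with $s = t = \id$), so this forces $i = i'$ and $j = j'$, and then \eqref{eq:multHQ0} gives $x_{i,j}x_{i,j} = x_{i,j}$ while distinct vertex-pairs multiply to zero — wait, more precisely $x_{i,j}x_{i',j'} = \delta_{i,i'}\delta_{j,j'}x_{i,j}$ directly by \eqref{eq:multHQ0}. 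Hence $a_j a_{j'} = \delta_{j,j'} a_j$. Completeness is \eqref{eq:unitHQ}: $\sum_j a_j = \sum_{i,j} x_{i,j} = 1_{\hay}$, and similarly for the $a_j'$. Primitivity I would deduce from the structure of $\hay$ as a graded algebra with $\hay_0 = \bigoplus_{i,j \in Q_0}\kk x_{i,j}$ being a direct sum of matrix-like pieces; alternatively cite that this is exactly Hayashi's face-idempotent description, referencing \cite{Hayashi93}. The cleanest route is to observe $a_j \hay_0 a_j = \kk x_{j,j}$ is one-dimensional (spanned by a single idempotent), so $a_j$ is primitive in $\hay_0$, and a graded-algebra argument lifts this to primitivity in $\hay$.

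For part~\ref{prop:epHQs}, I would use the characterization in Proposition~\ref{prop:Hs-facts}\ref{itm:coideal}: $H_t = \{(\varphi\otimes\id)\Delta(1) : \varphi \in H^*\}$ and $H_s = \{(\id\otimes\varphi)\Delta(1) : \varphi \in H^*\}$. From \eqref{eq:unitHQ} and \eqref{eq:coalgHQ}, $\Delta(1_{\hay}) = \sum_{i,j\in Q_0}\Delta(x_{i,j}) = \sum_{i,j,c \in Q_0} x_{i,c}\otimes x_{c,j}$ (paths of length~0 are just vertices). Applying $\id\otimes\varphi$ gives $\sum_{i,c} \big(\sum_j \varphi(x_{c,j})\big) x_{i,c} = \sum_c \mu_c\, a_c$ where $\mu_c = \sum_j \varphi(x_{c,j})$ ranges over all of $\kk$ as $\varphi$ varies, so $\hay_s = \bigoplus_{c} \kk a_c$; applying $\varphi\otimes\id$ symmetrically gives $\hay_t = \bigoplus_c \kk a_c'$.

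The main obstacle I anticipate is the bookkeeping in part~\ref{eq:epHQ}: getting the indices of the $\delta$'s to line up correctly when peeling off one generator at a time, in particular making sure the $\delta_{t(p_{k-1}),s(p_k)}$ factor emerges from the right place and is not double-counted, and handling the two possible Sweedler decompositions in \ref{def:wba4} consistently. Everything else is routine, though part (b)'s primitivity claim needs a sentence of justification rather than a bare assertion.
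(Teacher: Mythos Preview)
Your approach is largely correct but differs from the paper's in part~(a), and your primitivity argument in part~(b) has a gap.

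For part~(a), the paper avoids the weak-multiplicativity axiom entirely. It observes, directly from \eqref{eq:multHQ1}, that $x_{p_1,q_1}\cdots x_{p_k,q_k}$ vanishes unless all the composability $\delta$'s hold, and when they do hold the product is by definition \eqref{eq:symbHQ} the single basis element $x_{p_1\cdots p_k,\,q_1\cdots q_k}$; one then applies $\ep(x_{a,b})=\delta_{a,b}$ from \eqref{eq:coalgHQ}. The paper writes out $k=2$ explicitly and leaves the general case to induction. Your route via Definition~\ref{def:wba}\ref{def:wba4} and a Sweedler expansion of the middle factor does work, but it re-derives from the weak-bialgebra axioms something already sitting in the algebra presentation, and the index bookkeeping you flag as the main obstacle is exactly the price paid for not going directly.

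For part~(b), your claim that $a_j\,\hay_0\,a_j=\kk\, x_{j,j}$ is incorrect: a direct computation using \eqref{eq:multHQ0} gives $a_j\, x_{k,\ell}\, a_j = \delta_{j,\ell}\,x_{k,j}$, so the corner equals $\operatorname{span}_\kk\{x_{k,j}:k\in Q_0\}$, which is $|Q_0|$-dimensional. Indeed $a_j=\sum_{k} x_{k,j}$ is visibly a sum of $|Q_0|$ nonzero orthogonal idempotents, so $a_j$ is \emph{not} primitive in $\hay$ once $|Q_0|\ge 2$. The paper itself does not argue primitivity (it says ``straightforward to check'' and points to Hayashi); the natural reading is that the $a_j$ are primitive in $\hay_s$ and the $a_j'$ in $\hay_t$, which follows at once from part~(c). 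Your checks of orthogonality and completeness are fine.

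For part~(c), the paper instead computes $\eps(x_{a,b})=\delta_{a,b}\sum_{i} x_{i,t(a)}$ and $\ept(x_{a,b})=\delta_{a,b}\sum_{j} x_{s(b),j}$ directly from Definition~\ref{def:eps}; your route through $\Delta(1)$ and Proposition~\ref{prop:Hs-facts}\ref{itm:coideal} is an equally short alternative.
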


\begin{proof}
(a) The equation clearly holds for $k=1$. We will show this for $k=2$; the rest follows by induction:
\begin{align*}
    \ep(x_{p_1,q_1}x_{p_2,q_2}) &\overset{\eqref{eq:multHQ1}}{=}\ep(\delta_{t(p_1),s(p_2)}\delta_{t(q_1),s(q_2)}x_{p_1,q_1}x_{p_2,q_2}) \\
    &\overset{\eqref{eq:symbHQ}}{=}\delta_{t(p_1),s(p_2)}\delta_{t(q_1),s(q_2)}\ep(x_{p_1p_2,q_1q_2}) \\
    &\overset{\eqref{eq:coalgHQ}}{=}\delta_{t(p_1),s(p_2)}\delta_{t(q_1),s(q_2)}\delta_{p_1p_2,q_1q_2} \\
    &\overset{}{=}\delta_{t(p_1),s(p_2)}\delta_{t(q_1),s(q_2)}\delta_{p_1,q_1}\delta_{p_2,q_2}.
\end{align*}

\smallskip

(b) This is straightforward to check.
\smallskip

(c) 
We get $\eps(x_{a,b}) = \delta_{a,b} \sum_{i \in Q_0} x_{i,t(a)}$ and $\ept(x_{a,b}) = \delta_{a,b} \sum_{j \in Q_0} x_{s(b),j}$ for $a,b\in Q_{\ell}$. 
\end{proof}


\subsection{Corepresentation categories of weak bialgebras} \label{sec:corep}

Here, we discuss the monoidal categories of corepresentations of weak bialgebras, and algebras within these categories.

\begin{definition}
A \textit{monoidal category} $\mathcal{C}=(\mathcal{C}, \otimes, \unit, \alpha, l, r)$ consists of:   a category $\CC$; a bifunctor $\otimes : \CC\times \CC\rightarrow \CC$; a natural isomorphism   $\alpha_{X,Y,Z}: (X \otimes Y)\otimes Z \overset{\sim}{\to} X \otimes (Y\otimes Z)$ for each $X,Y,Z \in \CC$;  an object $\unit \in \CC$; and natural isomorphisms $l_X: \mathbbm{1} \otimes X \overset{\sim}{\to} X, \:\: r_X:X \otimes \mathbbm{1} \overset{\sim}{\to} X$ for each $X \in \CC$,
 such that the pentagon and triangle axioms are satisfied (see \cite[Equations~2.2, 2.10]{EGNO}).
\end{definition}

An example of a monoidal category is $\Vec$, the category of finite-dimensional $\kk$-vector spaces, with $\otimes = \otimes_\kk$, $\unit = \kk$, and with the canonical associativity and unit isomorphisms.
If $H$ is a weak bialgebra, we can endow the category of right (or left) $H$-comodules with the structure of a monoidal category as follows.

\begin{example}[{\cite{BCJ,Nill}}]
\label{ex:right}
For a weak bialgebra $H = (H, m, u, \Delta, \varepsilon)$, the category $\MH$ of right $H$-comodules can be given the structure of a monoidal category:
\[\MH = ({\sf Comod}\text{-}H,  \hspace{.05in} \tenbar, \hspace{.05in} \unit = H_s, \hspace{.05in} \alpha = \alpha_{{\sf Vec}_\Bbbk}, \hspace{.05in} l, \hspace{.05in} r).
\]
Here, for $M,N \in \MH$, the monoidal product of $M$ and $N$ is defined to be
\begin{equation*} \label{eq:tenbar}
 M \tenbar N := \left\{m \otimes n \in M \otimes N \mid m \otimes n = \varepsilon(m_{[1]}n_{[1]}) m_{[0]} \otimes n_{[0]} \right\}.
\end{equation*}
The counital subalgebra $H_s$ is naturally a right $H$-comodule since the image of $\Delta|_{H_s}$ is a subspace of $H_s \otimes H$, and so $\Delta|_{H_s}$ can be viewed as a map $H_s \to H_s \otimes H$. By \cite[Theorem 3.1]{BCJ}, $H_s$ is the unit object of the monoidal category $\MH$. By \cite[Section~3]{BCJ}, the monoidal category $\MH$ has unit isomorphisms:
$$l_M: H_s \tenbar M \to M, \quad \quad x \; \bar{\otimes} \; m = \varepsilon(x_{[1]}m_{[1]}) x_{[0]} \tenbar m_{[0]} ~~\mapsto~~ \varepsilon(x m_{[1]}) m_{[0]},$$

\vspace{-.2in}

$$r_M:  M \tenbar H_s \to M, \quad \quad m \; \bar{\otimes} \; x = \varepsilon(m_{[1]}x_{[1]}) m_{[0]} \tenbar x_{[0]} ~~\mapsto~~ \varepsilon(m_{[1]} x) m_{[0]},$$
for all $M \in \MH$.  
\end{example}

\begin{example}
\label{ex:left}
Likewise, for a weak bialgebra $H = (H, m, u, \Delta, \varepsilon)$, the category $\HM$ of left $H$-comodules can be given the structure of a monoidal category:
\[\HM = (H\text{-}{\sf Comod},  \hspace{.05in} \tenbar, \hspace{.05in} \unit = H_t, \hspace{.05in} \alpha = \alpha_{{\sf Vec}_\Bbbk}, \hspace{.05in} l, \hspace{.05in} r).
\]
To the best of our knowledge, the details of the monoidal structure of this category are not explicitly stated in the literature, so we include them for the convenience of the reader. For $M,N \in \HM$, the monoidal product of $M$ and $N$ is defined to be 
\begin{equation*} \label{eq:tenbarleft}
 M \tenbar N := \left\{m \otimes n \in M \otimes N \mid m \otimes n = \varepsilon(m_{[-1]}n_{[-1]}) m_{[0]} \otimes n_{[0]} \right\}.
\end{equation*}
The restriction of the coproduct $\Delta|_{H_t}$, viewed as a map $H_t \to H \otimes H_t$ makes $H_t$ a left $H$-comodule which is the unit object of the monoidal category $\HM$. Explicitly, the unit isomorphisms of $\HM$ are given by:
\[l_M: H_t \tenbar M \to M, \quad \quad x \; \bar{\otimes} \; m = \varepsilon(x_{[-1]}m_{[-1]}) x_{[0]} \tenbar m_{[0]} ~~\mapsto~~ \varepsilon(x m_{[-1]}) m_{[0]}
\]
\vspace{-.2in}
\[r_M:  M \tenbar H_t \to M, \quad \quad m \; \bar{\otimes} \; x = \varepsilon(m_{[-1]}x_{[-1]}) m_{[0]} \tenbar x_{[0]} ~~\mapsto~~ \varepsilon(m_{[-1]} x) m_{[0]},
\]
for all $M \in \HM$. 
\end{example}

Now we turn our attention to algebras  in monoidal categories.

\begin{definition}[${\sf Alg}(\CC)$] 
\label{def:alg} Let $(\CC, \otimes, \unit, \alpha, l, r)$ be a  monoidal category.
An \textit{algebra} in $\CC$ is a triple $(A,m,u)$, where $A$ is an object in $\CC$, and  $m:A \otimes A \to A$, $u:\unit \to A$ are morphisms in $\CC$, satisfying unitality and associativity constraints: 
    $$ m (m \otimes \id) = m(\id \otimes m) \alpha_{A,A,A},\quad 
       m (u \otimes \id) = l_A,  \quad m(\id \otimes u) = r_A. $$
A {\it morphism} of algebras $(A, m_A, u_A)$ to $(B, m_{B}, u_{B})$  is a morphism $f: A \to B$ in $\CC$ so that $fm_A = m_{B \otimes B}(f \otimes f)$ and $fu_A = u_{B}$.  Algebras in $\CC$ and their morphisms  form a category, which we denote by ${\sf Alg}(\CC)$.
\end{definition}

Algebras in $\Vec$ are the same as $\kk$-algebras. 

\smallskip

Now we consider algebras that have the structure of a comodule over a weak bialgebra $H$. 
There are two related notions: we can consider the objects in ${\sf Alg}(\MH)$ (or, ${\sf Alg}(\HM)$), or we can consider $\kk$-algebras (i.e., objects of ${\sf Alg}(\Vec)$) which are also right (or, left) $H$-comodules such that the algebra and comodule structures are compatible as done below. 

\begin{definition}[$\HA$, $\mathcal{A}^H$]
\label{def:Halg}
Let $H$ be a weak bialgebra. 
\begin{enumerate}[(a)]
    \item \label{def:Halg-L} Consider the {\it category $\HA$ of left $H$-comodule algebras} defined as follows. The objects of $\HA$ are objects of ${\sf Alg}(\Vec)$, 
$$(A,~m_A: A \otimes A \to A, ~u_A: \Bbbk \to A),$$
with $1_A:=u_A(1_\Bbbk)$, so that the $\Bbbk$-vector space $A$ is a left $H$-comodule via $$\lambda_A: A \to H \otimes A, \quad a \mapsto a_{[-1]} \otimes a_{[0]},$$ the multiplication map $m_A$ is compatible with $\lambda_A$ in the sense that 
\begin{equation}
\label{def:Halg-L-mult}
  (ab)_{[-1]} \otimes (ab)_{[0]} = a_{[-1]} b_{[-1]} \otimes a_{[0]} b_{[0]} \quad  \forall a,b \in A; 
\end{equation} 
the unit map $u_A$ is compatible with $\lambda_A$ in the sense that  
\begin{equation} 
\label{def:Halg-L-unit}
\lambda_A(1_A) \in H_s \otimes A.  
\end{equation}
The morphisms of $\HA$ are maps in ${\sf Alg}(\Vec)$ that are also $H$-comodule maps. 

\medskip

    \item \label{def:Halg-R} Consider the {\it category $\mathcal{A}^H$ of right $H$-comodule algebras} defined as follows. The objects of $\mathcal{A}^H$ are objects of ${\sf Alg}(\Vec)$, 
$$(A,~m_A: A \otimes A \to A, ~u_A: \Bbbk \to A),$$
with $1_A:=u_A(1_\Bbbk)$, so that the $\Bbbk$-vector space $A$ is a right $H$-comodule via $$\rho_A: A \to A \otimes H, \quad a \mapsto a_{[0]} \otimes a_{[1]},$$ the multiplication map $m_A$ is compatible with $\rho_A$ in the sense that 
$$(ab)_{[0]} \otimes (ab)_{[1]} = a_{[0]} b_{[0]} \otimes a_{[1]} b_{[1]} \quad  \forall a,b \in A; $$
the unit map $u_A$ is compatible with $\rho_A$ in the sense that  
$$ \rho_A(1_A) \in A \otimes H_t.$$
The morphisms of $\mathcal{A}^H$ are maps in ${\sf Alg}(\Vec)$ that are also $H$-comodule maps. 
\end{enumerate}
\end{definition}

The categories $\HA$ and $\Alg(\HM)$ (likewise, $\AH$ and $\Alg(\MH))$ are essentially the same.

\begin{proposition}\cite[Theorem~4.4]{WWW} \label{prop:alg}
There is an isomorphism of categories between ${\sf Alg}(\MH)$ and $\mathcal{A}^H$, and between ${\sf Alg}(\HM)$ and $\HA$. \qed
\end{proposition}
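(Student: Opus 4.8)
The plan is to establish the two isomorphisms of categories by constructing explicit functors and checking they are mutually inverse; by left-right symmetry it suffices to treat one case carefully, say $\mathsf{Alg}(\MH) \cong \AH$, and indicate the modifications for $\mathsf{Alg}(\HM) \cong \HA$. The heart of the matter is \emph{not} the comodule structure — an object of $\mathsf{Alg}(\MH)$ is by definition a $\kk$-vector space $A$ that is a right $H$-comodule equipped with morphisms $m\colon A\tenbar A\to A$ and $u\colon H_s\to A$ \emph{in} $\MH$ — but rather reconciling the monoidal unit $H_s$ of $\MH$ with the field $\kk$, the monoidal unit of $\Vec$, and reconciling the truncated tensor product $\tenbar$ with the ordinary $\otimes$.

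First I would fix notation: given $(A,m,u)\in\mathsf{Alg}(\MH)$, I want to produce $(A, \tilde m\colon A\otimes A\to A, \tilde u\colon \kk\to A)\in\AH$. The multiplication $\tilde m$ is obtained by precomposing $m$ with the inclusion $A\tenbar A\hookrightarrow A\otimes A$; one must check this lands in $A\tenbar A$ is \emph{not} required — rather, one extends $m$ to all of $A\otimes A$, and the key point is that \eqref{def:Halg-L-mult}-type compatibility (the right-handed analogue $(ab)_{[0]}\otimes(ab)_{[1]} = a_{[0]}b_{[0]}\otimes a_{[1]}b_{[1]}$) follows because $m$ is a comodule morphism on $A\tenbar A$ together with the defining relation of $\tenbar$. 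For the unit, I set $\tilde u(1_\kk) := u(1_{H_s})$, noting $H_s$ is unital since it is a separable (hence unital) algebra by Proposition~\ref{prop:Hs-facts}\ref{itm:fd-sep}; the condition $\rho_A(1_A)\in A\otimes H_t$ — wait, in the right-comodule case it is $\rho_A(1_A)\in A\otimes H_t$ — must be derived from the fact that $u\colon H_s\to A$ is a comodule map and from $\Delta|_{H_s}$ landing in $H_s\otimes H$, combined with Proposition~\ref{prop:Hs-facts}\ref{itm:deltaHsht}. Conversely, given $(A,\tilde m,\tilde u)\in\AH$, I restrict $\tilde m$ to $A\tenbar A$ (checking the restriction is well-defined, i.e.\ maps into $A$, which is automatic, and is a morphism in $\MH$, which uses the multiplicativity \eqref{def:Halg-L-mult}), and I define $u\colon H_s\to A$ by $u(x) := x\cdot\tilde u(1_\kk)$ using a suitable action, or more precisely by composing the coaction of $1_A$ with multiplication; here the condition $\rho_A(1_A)\in A\otimes H_t$ is exactly what makes $u$ land correctly and be a comodule map, and $u$ being a unit for $\tilde m|_{A\tenbar A}$ in the monoidal sense (i.e.\ compatibility with $l_A, r_A$ from Example~\ref{ex:right}) is the content of the unit axioms of Definition~\ref{def:alg} unwound via the explicit formulas for $l,r$.

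Next I would verify functoriality: a morphism in $\mathsf{Alg}(\MH)$ is a morphism in $\MH$ compatible with $m$ and $u$, and a morphism in $\AH$ is a comodule map that is an algebra map in the $\Vec$ sense; these are literally the same underlying linear maps once the identifications above are in place, since compatibility with $m$ on $A\tenbar A$ plus multiplicativity of the comodule structure forces compatibility with $\tilde m$ on all of $A\otimes A$, and compatibility with $u\colon H_s\to A$ is equivalent to sending $1_A$ to $1_A$. Finally I would check that the two constructions are mutually inverse on objects and morphisms — on morphisms this is immediate, and on objects it amounts to checking that restricting $\tilde m$ to $A\tenbar A$ and then re-extending recovers $\tilde m$ (true because $m$ is determined by the comodule structure and multiplicativity), and that the passage $u\leftrightarrow\tilde u$ is inverse (true since $\tilde u(1_\kk)$ is a group-like-type element fixed appropriately). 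I expect the main obstacle to be the bookkeeping around the unit object: showing precisely that a morphism $u\colon H_s\to A$ in $\MH$ satisfying the monoidal unitality axioms corresponds bijectively to an element $1_A\in A$ with $\rho_A(1_A)\in A\otimes H_t$ satisfying \eqref{def:Halg-L-unit}'s right-handed analogue, using the explicit unit isomorphisms $l_M, r_M$ of Example~\ref{ex:right} and the structure of $H_s$ from Proposition~\ref{prop:Hs-facts}. Since this is exactly \cite[Theorem~4.5]{WWW}, I would present the argument at the level of detail above and cite that reference for the full verification, which is what the statement does.
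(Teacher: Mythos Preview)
Your proposal is correct and aligns with the paper's approach: the paper simply cites \cite[Theorem~4.5]{WWW} for the right-comodule case and remarks that the left-comodule case follows by substituting the structures of Example~\ref{ex:left} for those of Example~\ref{ex:right}. Your sketch of the functors (extend $m$ from $A\tenbar A$ to $A\otimes A$ via the projection, build $u$ from $1_A$ and vice versa) is precisely the content of the cited result, and your final paragraph correctly identifies that the statement is established by that citation.
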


In \cite[Theorem 4.5]{WWW}, the functors between ${\sf Alg}(\MH)$ and $\mathcal{A}^H$ are given explicitly. For the isomorphism between ${\sf Alg}(\HM)$ and $\HA$, the proof should be adjusted using the structures in Example~\ref{ex:left} rather than Example~\ref{ex:right}.

\smallskip

\subsection{Examples} \label{sec:ex}
Now we provide some examples of comodule algebras over weak bialgebras, which will be important in the rest of the paper.

\begin{example} \label{ex:haycoaction}
Consider Hayashi's face algebra $\hay$ from Example~\ref{ex:hay}. It is straightforward to check that the path algebra $\kk Q$ belongs to ${}^{\hay} \mathcal{A}$ and to $\mathcal{A}^{\hay}$ via the coactions:
\begin{align*}
\lambda: \kk Q &\to \hay \otimes \kk Q & \rho: \kk Q &\to \kk Q \otimes \hay \\
 e_j &\mapsto \textstyle \sum_{i \in Q_0} x_{j,i} \otimes e_i & e_j &\mapsto \textstyle \sum_{i \in Q_0}  e_i \otimes x_{i,j}\\
 q &\mapsto \textstyle \sum_{p \in Q_1} x_{q,p} \otimes p  & q &\mapsto \textstyle \sum_{p \in Q_1}  p\otimes x_{p,q},
\end{align*}
for $j \in Q_0$ and $q \in Q_1$. See \cite[Example~4.10]{WWW} for verification that $\kk Q \in \mathcal{A}^{\hay}$.
\end{example}

\begin{example}
\label{ex:D}
Let $Q_{\text{\tiny{$\bullet \bullet$}}}$ be the quiver with two vertices and no arrows. Let $D$ be the algebra
\[ D = \frac{\kk \langle x, y \rangle }{\left(x^2 = x, \: y^2 = y, \: xy = yx = 0 \right)}
\]
so that $1_D = x + y$ (as an algebra, $D \cong \kk Q_{\text{\tiny{$\bullet \bullet$}}}$). Define a coproduct $\Delta_D$ on $D$ by
\[ \Delta_D(x) = x \otimes x + y \otimes y, \quad \quad \Delta_D(y) = x \otimes y + y \otimes x
\]
and a counit $\varepsilon_D$ by
\[ \varepsilon_D(x) = 1_\kk, \quad \quad \varepsilon_D(y) = 0_\kk.
\]
One can verify that this makes $D$ a bialgebra.

One can  show that $\kk Q_{\text{\tiny{$\bullet \bullet$}}}$ is  a transposed $D$-comodule algebra [Definition~\ref{def:ManinUQSG-intro}\ref{itm:transpose-intro}] under the left and right coactions: 
\[
\begin{array}{lll}
\medskip
\lambda: \kk Q_{\text{\tiny{$\bullet \bullet$}}} \to D \otimes \kk Q_{\text{\tiny{$\bullet \bullet$}}}, \quad
&e_1 \mapsto x \otimes e_1 + y \otimes e_2, \quad
&e_2 \mapsto y \otimes e_1 + x \otimes e_2;\\
\rho: \kk Q_{\text{\tiny{$\bullet \bullet$}}} \to \kk Q_{\text{\tiny{$\bullet \bullet$}}} \otimes D, \quad  &e_1 \mapsto e_1 \otimes x + e_2 \otimes y,  \quad &e_2 \mapsto e_1 \otimes y + e_2 \otimes x. 
\end{array}
\]
\end{example}
\smallskip

For our next example, we will need the following two lemmas. These lemmas are well-known, and their proofs are routine.

\begin{lemma}
\label{lem:H+K}
If $(H,m_H, u_H, \Delta_H, \ep_H)$ and $(K,m_K,u_K,\Delta_K,\ep_K)$ are weak bialgebras, then $H \oplus K$ is a weak bialgebra with the following structure for all $h,g \in H, k,l \in K$:
\begin{align*}
 &\text{multiplication:} &(h,k)(g,l):=(hg,kl); \\
 &\text{unit:} & 1_{H\oplus K}:=(1_H,1_K);  \\
 &\text{comultiplication:} &\Delta_{H\oplus K}((h,k)):=(h_1,0)\otimes (h_2,0) + (0,k_1) \otimes (0,k_2); \\
 &\text{counit:} & \ep_{H\oplus K}((h,k)):=\ep_H(h)+\ep_K(k). 
\end{align*}
We also have that
\[(\ep_{H\oplus K})_t(h,k) = \l((\ep_H)_t(h), \;(\ep_K)_t(k)\r), \quad (\ep_{H\oplus K})_s(h,k) = \l((\ep_H)_s(h), \;(\ep_K)_s(k)\r). \qed\]

\end{lemma}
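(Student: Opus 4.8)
\textbf{Proof proposal for Lemma~\ref{lem:H+K}.}

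The plan is to verify the five axioms of Definition~\ref{def:wba} directly from the componentwise definitions, treating the two coordinates as entirely independent. First I would check that $(H \oplus K, m_{H \oplus K}, u_{H \oplus K})$ is an algebra: associativity and unitality are inherited coordinatewise from $H$ and $K$. Likewise $(H \oplus K, \Delta_{H \oplus K}, \ep_{H \oplus K})$ is a coalgebra --- coassociativity follows since $\Delta_{H \oplus K}$ is a direct sum of the coassociative maps $\Delta_H$ and $\Delta_K$ (and the cross terms vanish because $(h,0)$ and $(0,k)$ multiply and comultiply into their own summands), and counitality follows from $\ep_H, \ep_K$ being counits, using $\ep_{H \oplus K}(h,0) = \ep_H(h)$ and $\ep_{H \oplus K}(0,k) = \ep_K(k)$. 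For axiom~\ref{def:wba3}, multiplicativity of $\Delta_{H \oplus K}$, I would compute $\Delta_{H \oplus K}((h,k)(g,l)) = \Delta_{H \oplus K}(hg, kl)$ and expand; the key point is that when one multiplies $\Delta_{H \oplus K}(h,k) \cdot \Delta_{H \oplus K}(g,l)$, all mixed products of the form $(h_1,0)(0,l_1)$ collapse to zero, leaving exactly $(h_1 g_1, 0) \otimes (h_2 g_2, 0) + (0, k_1 l_1) \otimes (0, k_2 l_2)$, which is $\Delta_{H \oplus K}(hg, kl)$ by multiplicativity of $\Delta_H$ and $\Delta_K$.

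For axiom~\ref{def:wba4}, weak multiplicativity of the counit, I would take $(a,a'), (b,b'), (c,c') \in H \oplus K$ and unwind $\ep_{H \oplus K}((a,a')(b,b')(c,c'))$ using $\ep_{H \oplus K} = \ep_H \oplus \ep_K$ and the fact that $\Delta_{H \oplus K}(b,b') = (b_1, 0) \otimes (b_2, 0) + (0, b'_1) \otimes (0, b'_2)$; again the cross terms vanish under multiplication, so the expression separates as $\ep_H(a b_1) \ep_H(b_2 c) + \ep_K(a' b'_1) \ep_K(b'_2 c')$, and each summand equals $\ep_H(abc)$ resp. $\ep_K(a'b'c')$ by the weak bialgebra axiom in $H$ and $K$; summing gives $\ep_{H \oplus K}((a,a')(b,b')(c,c'))$, and the other equality (using $b_2 \otimes b_1$) is identical. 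Axiom~\ref{def:wba5} is similar: $\Delta^2_{H \oplus K}(1_H, 1_K)$ is the direct sum of $\Delta^2_H(1_H)$ and $\Delta^2_K(1_K)$ (placed in the appropriate summands of the triple tensor product), and the two bracketed expressions on the right-hand side also decompose as direct sums of the corresponding expressions for $H$ and $K$ --- here one uses that a product like $(\Delta_{H \oplus K}(1) \otimes 1)(1 \otimes \Delta_{H \oplus K}(1))$ has all cross terms killed --- so the identity reduces to axiom~\ref{def:wba5} holding separately in $H$ and $K$.

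Finally, for the formulas for the counital maps, I would apply Definition~\ref{def:eps} directly: writing $\Delta_{H \oplus K}(1) = (1_{H1}, 0) \otimes (1_{H2}, 0) + (0, 1_{K1}) \otimes (0, 1_{K2})$, we get $(\ep_{H \oplus K})_t(h,k) = \ep_{H \oplus K}((1_{H \oplus K})_1 \cdot (h,k)) \, (1_{H \oplus K})_2$, and expanding this sum, the $H$-summand contributes $\ep_H(1_{H1} h)\,(1_{H2}, 0) = ((\ep_H)_t(h), 0)$ and the $K$-summand contributes $(0, (\ep_K)_t(k))$, giving the claimed formula; the computation for $(\ep_{H \oplus K})_s$ is the mirror image. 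I do not expect any genuine obstacle here --- every step is a bookkeeping exercise in which the recurring mechanism is that products of elements living in different summands vanish, so the whole structure splits as a direct sum and each axiom is inherited. The only mild care needed is keeping track of which copy of $H$ or $K$ each tensor factor lives in when expanding $\Delta^2$ for axiom~\ref{def:wba5}.
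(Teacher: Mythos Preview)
Your proposal is correct and is precisely the routine componentwise verification the paper has in mind; the paper gives no proof beyond noting that the lemma is well-known and its proof routine, and your outline fills in exactly those details.
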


\medskip

\begin{lemma}
\label{lem:HK-comod}
Suppose that $V$ is a right $H$-comodule via
\[\rho_H: V \to V \otimes H, \quad v \mapsto v_{[0]} \otimes v_{[1]}.\]
Then $V$ is a right $(H \oplus K)$-comodule via
\[\rho: V \to V \otimes (H\oplus K), \quad v \mapsto v_{[0]} \otimes (v_{[1]},0).\]
Furthermore, if $V$ is a right $H$-comodule algebra via $\rho_H$, then $V$ is a right $(H\oplus K)$-comodule algebra via $\rho$.
A similar statement holds for left $H$-comodules and left $H$-comodule algebras. \qed
\end{lemma}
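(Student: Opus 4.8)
The plan is to check directly that $\rho$ makes $V$ a right $(H\oplus K)$-comodule, and then that this coaction is compatible with the algebra structure on $V$ whenever $\rho_H$ is. Both parts are immediate consequences of the corresponding properties of $\rho_H$ together with the explicit description of the structure maps of $H\oplus K$ recorded in Lemma~\ref{lem:H+K}.

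For the comodule axioms, write $\rho(v) = v_{[0]} \otimes (v_{[1]},0)$ for $v \in V$. Coassociativity holds because $(\id_V \otimes \Delta_{H\oplus K})\rho(v) = v_{[0]} \otimes (v_{[1]1},0) \otimes (v_{[1]2},0)$ --- the $K$-components vanish, since the entry being comultiplied lies in $H \oplus 0$ --- which agrees with $(\rho \otimes \id)\rho(v) = v_{[0][0]} \otimes (v_{[0][1]},0) \otimes (v_{[1]},0)$ by coassociativity of $\rho_H$. Counitality holds because $\ep_{H\oplus K}(h,0) = \ep_H(h)$, so $(\id_V \otimes \ep_{H\oplus K})\rho(v) = v_{[0]}\,\ep_H(v_{[1]}) = v$.

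Now suppose $V \in \mathcal{A}^H$ via $\rho_H$. Multiplicativity of $\rho$ transfers from that of $\rho_H$ because $(h,0)(g,0) = (hg,0)$ in $H\oplus K$: for $a,b \in V$ one computes $\rho(a)\rho(b) = a_{[0]}b_{[0]} \otimes (a_{[1]}b_{[1]},0) = (ab)_{[0]} \otimes ((ab)_{[1]},0) = \rho(ab)$. For the unit condition, Lemma~\ref{lem:H+K} gives $(\ep_{H\oplus K})_t(h,k) = ((\ep_H)_t(h),(\ep_K)_t(k))$, hence $(H\oplus K)_t = H_t \oplus K_t$ as a subspace of $H\oplus K$; writing $\rho_H(1_V) = \sum_i v_i \otimes h_i$ with each $h_i \in H_t$ (possible since $V\in\mathcal{A}^H$), we get $\rho(1_V) = \sum_i v_i \otimes (h_i,0) \in V \otimes (H_t \oplus K_t) = V \otimes (H\oplus K)_t$, so $V \in \mathcal{A}^{H\oplus K}$.

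The left-handed statement is proved in the same way, with $\lambda: V \to (H\oplus K)\otimes V$, $v \mapsto (v_{[-1]},0)\otimes v_{[0]}$, using $(H\oplus K)_s = H_s \oplus K_s$ from Lemma~\ref{lem:H+K} to check the unit condition $\lambda(1_V) \in (H\oplus K)_s \otimes V$ of Definition~\ref{def:Halg}\ref{def:Halg-L}. None of these steps is difficult; the only point that requires any care is bookkeeping the weak-bialgebra unit-compatibility conditions \eqref{def:Halg-L-unit} and its right-handed analogue, which is exactly where the identities $(H\oplus K)_s = H_s\oplus K_s$ and $(H\oplus K)_t = H_t\oplus K_t$ are needed.
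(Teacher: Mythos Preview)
Your proof is correct and is exactly the routine direct verification the paper has in mind; the paper itself omits the argument entirely (the lemma ends with a \qed\ after ``their proofs are routine''), so there is no alternative approach to compare against.
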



\begin{example}\label{ex1} 
Let $Q_{\text{\tiny{$\bullet \bullet$}}}$ be the quiver with two vertices and no arrows, and recall the bialgebra $D$ defined in \cref{ex:D}. A presentation of $D$ is given by
\[D:=\frac{\kk\l<y_{1,1},\;y_{1,2},\;y_{2,1},\;y_{2,2}\r>}{\l(y_{1,1}=y_{2,2},\; y_{1,2}=y_{2,1},\; y_{i,j}y_{i,k}=\delta_{j,k}y_{i,j}, \;y_{j,i}y_{k,i}=\delta_{j,k}y_{j,i}\r)},\]
with unit $1_D = y_{1,1} + y_{1,2}$. 

\medskip

\noindent \underline{Claim 1}. $\kk Q_{\text{\tiny{$\bullet \bullet$}}}$ is a left and right $(D \oplus D)$-comodule algebra via linear coactions.

\smallskip

\noindent {\it Proof of Claim 1}. The coalgebra structure is given by
\[\textstyle \Delta_D(y_{i,j}) = \sum_{k \in (Q_{\text{\tiny{$\bullet \bullet$}}})_0} y_{i,k} \otimes y_{k,j}, \quad \quad \ep_D(y_{i,j})=\delta_{i,j}, \quad \text{for all }i,j \in (Q_{\text{\tiny{$\bullet \bullet$}}})_0.\]
With this presentation, $D$ left and right coacts linearly on $\kk Q_{\text{\tiny{$\bullet \bullet$}}}$ via
\[
\begin{array}{ll}
\medskip
\textstyle \kk Q_{\text{\tiny{$\bullet \bullet$}}} \to D \otimes \kk Q_{\text{\tiny{$\bullet \bullet$}}}, \quad  &e_i \mapsto \sum_{j\in (Q_{\text{\tiny{$\bullet \bullet$}}})_0} y_{i,j} \otimes e_j\\
\textstyle \kk Q_{\text{\tiny{$\bullet \bullet$}}} \to \kk Q_{\text{\tiny{$\bullet \bullet$}}} \otimes D, \quad &e_i \mapsto \sum_{j\in (Q_{\text{\tiny{$\bullet \bullet$}}})_0} e_j \otimes y_{j,i}.
\end{array}
\]
By \cref{lem:HK-comod} and Example~\ref{ex:D}, we have that the coactions
\begin{align*}
   &\lambda: \kk Q_{\text{\tiny{$\bullet \bullet$}}} \to (D\oplus D) \otimes \kk Q_{\text{\tiny{$\bullet \bullet$}}} & \rho: \kk Q_{\text{\tiny{$\bullet \bullet$}}} \to \kk Q_{\text{\tiny{$\bullet \bullet$}}} \otimes (D \oplus D) \\
   &\textstyle  e_i \mapsto \sum_{j\in (Q_{\text{\tiny{$\bullet \bullet$}}})_0} (y_{i,j},0) \otimes e_j &\textstyle  e_i \mapsto \sum_{j\in (Q_{\text{\tiny{$\bullet \bullet$}}})_0} e_j \otimes (y_{j,i},0)
\end{align*}
yield the claim. \qed

\medskip

\noindent \underline{Claim 2}. $(D \oplus D)_t \cong \kk (Q_{\text{\tiny{$\bullet \bullet$}}})_0$ as algebras over $\kk$.

\smallskip

\noindent {\it Proof of Claim 2}.
Consider the morphism
$$\psi: \kk (Q_{\text{\tiny{$\bullet \bullet$}}})_0 \to (D \oplus D)_t,  \quad e_1 \mapsto (1_D,0), ~ e_2 \mapsto (0,1_D).$$
First, we will show that as a $\kk$-vector space, $(D \oplus D)_t = \text{Span}_{\kk}\l\{(1_D,0),\;(0,1_D)\r\}.$ By \cref{lem:H+K}, we have
\[(\ep_{D\oplus D})_t(1_D,0)=((\ep_D)_t(1_D),0)=(1_D,0), \qquad (\ep_{D\oplus D})_t(0,1_D)=(0,(\ep_D)_t(1_D))=(0,1_D).\]
Therefore, $\text{Span}_{\kk}\l\{(1_D,0),\;(0,1_D)\r\} \subseteq (D \oplus D)_t.$ To show the reverse inclusion, note that for $a,b \in D$ we have
\[(\ep_{D\oplus D})_t(a,b)=((\ep_D)_t(a),\;(\ep_D)_t(b))=(\ep_D(a)1_D, \;\ep_D(b)1_D),\]
where the last equality holds because $D$ is a bialgebra. 
Thus, $\kk (Q_{\text{\tiny{$\bullet \bullet$}}})_0 \cong \dim (D\oplus D)_t$ as $\kk$-vector spaces; here, $\dim (D\oplus D)_t = \dim \kk (Q_{\text{\tiny{$\bullet \bullet$}}})_0 =  2$. It is also clear that $\psi$ preserves the unit and multiplication. Therefore, $\psi$ is an isomorphism of $\kk$-algebras.

\medskip

\noindent \underline{Claim 3}. $\kk (Q_{\text{\tiny{$\bullet \bullet$}}})_0\not\cong (D\oplus D)_t  $ as left $(D\oplus D)$-comodules, where $\kk (Q_{\text{\tiny{$\bullet \bullet$}}})_0$ is a left $(D\oplus D)$-comodule via Claim~1, and $(D\oplus D)_t$ is naturally a left $(D\oplus D)$-comodule via comultiplication [\cref{ex:left}].

\smallskip

\noindent {\it Proof of Claim 3}.
By way of contradiction, suppose that we have an isomorphism \linebreak $\varphi: \kk (Q_{\text{\tiny{$\bullet \bullet$}}})_0 \to (D \oplus D)_t$ of left $(D\oplus D)$-comodules. Explicitly, the comodule structures are given by
\[\textstyle \lambda: \kk (Q_{\text{\tiny{$\bullet \bullet$}}})_0 \to (D \oplus D) \otimes \kk (Q_{\text{\tiny{$\bullet \bullet$}}})_0,  \quad  \quad
    e_i \mapsto \sum_{j\in (Q_{\text{\tiny{$\bullet \bullet$}}})_0}  (y_{i,j},0) \otimes e_j,\]
    
\vspace{-.4in}

\[
\begin{array}{crl}
\medskip
    &\\ &\lambda_t:=\Delta_{D\oplus D}|_{(D\oplus D)_t}:(D\oplus D)_t \to (D\oplus D) \otimes (D\oplus D)_t,   &(1_D,0) \mapsto (1_D,0) \otimes (1_D,0),\\ 
   & {}  &(0,1_D) \mapsto (0,1_D) \otimes (0,1_D).
\end{array}
\]
Since $(D\oplus D)_t = \text{Span}_{\kk}\{(1_D,0),\;(0,1_D)\},$ (see proof of Claim~2), we can write
\[\varphi(e_i)=\alpha_i(1_D,0) + \beta_i(0,1_D),\]
for some $\alpha_i,\beta_i \in \kk$. Since $\varphi$ is a left $(D \oplus D)$-comodule map,  $(\id_{(D\oplus D)} \otimes \varphi )\lambda=\lambda_t\varphi$. In particular,
\begin{align*}
   \textstyle \sum_{j \in (Q_{\text{\tiny{$\bullet \bullet$}}})_0} (y_{i,j},0) \otimes \l(\alpha_j(1_D,0)+\beta_j(0,1_D)\r) &= \textstyle \sum_{j\in (Q_{\text{\tiny{$\bullet \bullet$}}})_0}  (y_{i,j},0) \otimes \varphi(e_j)  \\
    &=\textstyle (\id_{D\oplus D} \otimes \varphi)\l(\sum_{j\in (Q_{\text{\tiny{$\bullet \bullet$}}})_0}  (y_{i,j},0) \otimes e_j\r) \\
    &= (\id_{D\oplus D} \otimes \varphi)\lambda(e_i) \\
    &=\lambda_t \varphi(e_i) \\
    &=\lambda_t\l(\alpha_i(1_D,0) + \beta_i(0,1_D)\r) \\
    &=\alpha_i(1_D,0)\otimes(1_D,0) + \beta_i(0,1_D)\otimes(0,1_D).
\end{align*}
Notice that the left hand side is contained in $(D \oplus 0) \otimes (D \oplus D)_t$. Therefore, we must have that $\beta_i=0$, since if not, the right hand side is not contained in $(D \oplus 0) \otimes (D \oplus D)_t$.  Therefore, $\varphi$ is not surjective and not an isomorphism of $(D\oplus D)$-comodules. \qed
\end{example}


\section{Universal linear coactions on graded algebras} 
\label{sec:universal}

In this section, we introduce the notion of a weak bialgebra that coacts linearly and universally on a graded algebra $A$ as in Hypothesis~\ref{hyp:A}. The universal weak bialgebras coacting on $A$ are defined below in Definitions~\ref{def:leftright-uqsgd} and~\ref{def:manin-uqsgd} below; we call them {\it universal quantum linear semigroupoids}. Recall here that $A$ is $\mathbb{N}$-graded $\kk$-algebra with $\dim_\kk A_i < \infty$ for all $i \in \mathbb{N}$, such that $A_0$ is a commutative, separable (so, Frobenius) $\kk$-algebra (we discuss how the assumptions on $A_0$ are used in Remarks~\ref{rem:A0fd-sep} and~\ref{rem:A0comm} below). Moreover, we say that $A$ is {\it connected} if $A_0 = \kk$, and that $A$ is {\it non-connected} otherwise. 

\smallskip

To proceed, we reinterpret the standing assumption, Hypothesis~\ref{hyp:linear-intro} from the introduction, as follows.

\begin{hypothesis} \label{hyp:co/action} 
[$\lambda, \lambda_i, \rho, \rho_i$] Let $H$ be a weak bialgebra, and recall the notion of a $H$-comodule algebra from  Definition~\ref{def:Halg}. From now on, we impose the assumptions below.

\begin{enumerate}
    \item Each left $H$-comodule algebra structure on $A$ will be {\it linear} in the sense that, for the structure map $\lambda:=\lambda^H_A: A \to H \otimes A$, the restriction
$\lambda|_{A_i}:=\lambda_i$
makes $A_i$ a left $H$-comodule for each $i$.

\smallskip

    \item Each right $H$-comodule algebra structure on $A$ will be {\it linear} in the sense that, for the structure map $\rho:=\rho^H_A: A \to A \otimes H$, the restriction
$\rho|_{A_i}:=\rho_i$
makes $A_i$ a right $H$-comodule for each $i$.
\end{enumerate}
\end{hypothesis}

\begin{remark} \label{rem:A0}
If $H$ left coacts linearly via $\lambda$ on $A$, then $A_0$ is a left $H$-comodule algebra  via $\lambda_0$. By \cite[Theorem~4.5]{WWW}, we can view $A_0$ as an object in the category  $\HA$ [Definition~\ref{def:Halg}]. The analogous statement holds for right coactions.
\end{remark}

Next, we discuss a na\"{i}ve notion of a weak bialgebra  coacting universally on $A$, that is, by merely replacing `bialgebra' with `weak bialgebra' in the definition of a universal quantum linear semigroup [Definition~\ref{def:UQSG-intro}]. This weak bialgebra fails to exist, even for an easy example of non-connected graded algebra $A$, as seen below.

\begin{remark} \label{rem:naive} 
Let $A$ be an algebra satisfying Hypothesis~\ref{hyp:A}.
Suppose that there exists a weak bialgebra $U := U(A)$ that left coacts on $A$ so that, for every weak bialgebra $H$ that left coacts on $A$, there exists a unique weak bialgebra map $\pi: U \to H$ so that $(\pi \otimes \id_A)\lambda^U = \lambda^H$. We will show that in general, such a weak bialgebra fails to exist.

Let $H$ be any nonzero weak bialgebra which left coacts on $A$. Then since there exists a weak bialgebra map $\pi: U \to H$, we have that $\dim_\kk U_t = \dim_\kk H_t$ by Proposition~\ref{prop:Hs-facts}\ref{itm:preservesubalg}.
Now take $A =  \kk Q_{\text{\tiny{$\bullet \bullet$}}}$ as in Section~\ref{sec:ex}, which is 
a comodule algebra over both the bialgebra $D$ (Example~\ref{ex:D}) and also over the weak bialgebra $D\oplus D$ (Example~\ref{ex1}).
By the above, if we take $H=D$, then we have $\dim_\kk U_t = \dim_\kk D_t$, and so by Proposition~\ref{prop:Hs-facts}\ref{itm:bialgebra}, $\dim_\kk U_t = 1$. 
On the other hand, we can also substitute $H$ by $D\oplus D$ and have $\dim_\kk U_t = \dim_\kk (D \oplus D)_t$; by Claim~2 of Example~\ref{ex1}, $\dim_\kk (D \oplus D)_t = \dim_\kk \kk (Q_{\text{\tiny{$\bullet \bullet$}}})_0 = 2$. Hence $1 = \dim_\kk U_t = 2$, which is a contradiction. Hence, $U(\kk Q_{\text{\tiny{$\bullet \bullet$}}})$  does not exist.
\end{remark}

To remedy the non-existence issue in the remark above, we impose an extra hypothesis relating $A_0$ to the counital subalgebras of our universally coacting weak bialgebras.
This is motivated by Claim~3 in Example~\ref{ex1}. Our main result, Theorem~\ref{thm:main-wba} below, shows that with this additional hypothesis, for any path algebra $\kk Q$, there exists a universal weak bialgebra coacting on $\kk Q$.

\begin{definition}[left UQSGd, $\Oleft(A)$;  right UQSGd, $\Oright(A)$]
\label{def:leftright-uqsgd}
Take a $\kk$-algebra $A$ as in Hypothesis~\ref{hyp:A}.
\begin{enumerate}[(a),font=\upshape]
\item \label{itm:left-uqsgd} Let $\mathcal{O}:=\Oleft(A)$ be a weak bialgebra that left coacts on $A$  with $A_0 \cong \mathcal{O}_t$ in ${}^{\mathcal{O}} \hspace{-.05in}\mathcal{A}$, so that for any weak bialgebra $H$ that left coacts on $A$ with $A_0 \cong H_t$ in $\HA,$ 
there is a unique weak bialgebra map $\pi: \mathcal{O} \to H$ so that $(\pi \otimes \id_A)\lambda^{\mathcal{O}}~=~\lambda^H.$
We refer to $\Oleft(A)$ as the {\it left universal quantum linear semigroupoid (left UQSGd) of $A$}, and refer to its coaction on $A$ as {\it universally base preserving}.

\medskip

\item \label{itm:right-uqsgd}
Let $\mathcal{O}:=\Oright(A)$ be a weak bialgebra that right coacts on $A$  with $A_0 \cong \mathcal{O}_s$ in $\mathcal{A}^{\mathcal{O}}$, so that for any weak bialgebra $H$ that right coacts on $A$ with $A_0 \cong H_s$ in $\HA,$ 
there is a unique weak bialgebra map $\pi: \mathcal{O} \to H$ so that $(\id_A \otimes \pi )\rho^{\mathcal{O}}~=~\rho^H.$
We refer to $\Oright(A)$ as the {\it right universal quantum linear semigroupoid (right UQSGd) of $A$}, and refer to its coaction on $A$ as {\it universally base preserving}.
\end{enumerate}
\smallskip

\noindent Here, the left (resp., right) $H$-coaction on $A_0$ is given by $\lambda_0$ (resp., $\rho_0$) as in Remark~\ref{rem:A0}, and the left (resp., right) $H$-coaction on $H_t$ (resp., on $H_s$) is given by $\Delta_H$ as in \cref{ex:left}.
\end{definition}

We make several remarks about the definition above. 

\begin{remark} \label{rem:A0fd-sep}
We use the assumption that $A_0$ is Frobenius and separable (from Hypothesis~\ref{hyp:A}) in the definition above and in Definition~\ref{def:manin-uqsgd} below. Namely, for any weak bialgebra $H$, the counital subalgebras $H_s$ and $H_t$ are Frobenius and separable $\kk$-algebras [Proposition~\ref{prop:Hs-facts}\ref{itm:fd-sep}]. We do not need to require that $A_0$ is commutative for Definition~\ref{def:leftright-uqsgd}.
\end{remark}

\begin{remark} \label{rem:compareuqg}
Note that the notion of universally base preserving coaction is weaker than the na\"{i}ve notion of a universal coaction discussed in Remark~\ref{rem:naive}.
Thus, the UQSGds in Definition~\ref{def:leftright-uqsgd} are more likely to exist than the universal weak bialgebras in  Remark~\ref{rem:naive}. 
 \end{remark}

\begin{remark} \label{rem:A0unitobj}
Observe that the universally base preserving condition takes a simple form when viewed through a categorical lens. By \cref{prop:alg}, we have categorical isomorphisms $\Alg(\HM) \cong \HA$ and $\Alg(\MH) \cong \AH$, and by \cref{ex:right,ex:left}, the unit objects of the monoidal categories $\Alg(\HM),\;\Alg(\MH)$ are $H_t,\; H_s$, respectively. So, the requirement that $H_t \cong A_0\;\text{in}\; \HA$ (resp., $H_s \cong A_0\; \text{in} \;\AH$) is equivalent to requiring that $A_0$ is isomorphic to the unit object of the monoidal category $\Alg(\HM)$ (resp., $\Alg(\MH)$).
\end{remark}

\begin{remark}
\label{rem:gen-Manin-oneside}
 Definition~\ref{def:leftright-uqsgd} generalizes Definition~\ref{def:UQSG-intro}, the notion of a one-sided UQSG (or, universal bialgebra that coacts from one side). Indeed, take $A$ a locally finite, connected $\mathbb{N}$-graded algebra and suppose that $\Oleft(A)$ exists. Then, $(\Oleft(A))_t = A_0 = \kk$, as $\kk$-vector spaces.
So, $\Oleft(A)$  must also be a bialgebra by Proposition~\ref{prop:Hs-facts}\ref{itm:bialgebra}, and thus, we recover the left UQSG $O^{\text{left}}(A)$ of $A$ when $A$ is connected.
\end{remark}

To generalize the transposed UQSG from Definition~\ref{def:ManinUQSG-intro}\ref{itm:ManinUQSG-intro} to the weak bialgebra setting, we need the following definitions. First, recall the transposed coaction from  Definition~\ref{def:ManinUQSG-intro}\ref{itm:transpose-intro} which we reinterpret below.

\begin{definition}
\label{def:transposecoaction}
Suppose that $H$ is a weak bialgebra coacting linearly on $A$ on the left and right via coactions $\lambda: A \to H \otimes A$ and $\rho: A \to A \otimes H$. Then for each $i$, $H$ coacts from the left and right on $A_i$ via the restrictions $\lambda_i$ and $\rho_i$. We call $A$ a {\it transposed $H$-comodule algebra}  if for each $i$, there exists a basis $\{v_j^i\}_{1\leq j\leq \dim A_i}$ for $A_i$ such that the coactions can be written in the following form:
\begin{align*}
    \lambda_i: A_i &\to H \otimes A_i & \quad  \rho_i: A_i &\to A_i \otimes H \\
    v_j^i &\mapsto \textstyle \sum_{1\leq k\leq \dim A_i} z_{j,k}^i \otimes v_k^i & \quad  v_j^i &\mapsto \textstyle \sum_{1\leq k\leq \dim A_i} v_k^i \otimes z_{k,j}^i,
\end{align*}
for some $z_{j,k}^i \in H$. 
\end{definition}

\begin{definition}[transposed UQSGd, $\Otrans(A)$]
\label{def:manin-uqsgd} 
Let $\mathcal{O}:=\Otrans(A)$ be a weak bialgebra such that $A$ is a transposed $\mathcal{O}$-comodule algebra with $A_0 \cong \mathcal{O}_t$ in ${}^{\mathcal{O}} \hspace{-.05in}\mathcal{A}$ and $A_0 \cong \mathcal{O}_s$ in $\mathcal{A}^{\mathcal{O}}$,  so that for any weak bialgebra $H$ for which $A$ is a transposed $H$-comodule algebra with  $A_0 \cong H_t$ in $\HA$ and $A_0 \cong H_s$ in $\AH$, there exists a unique weak bialgebra map  $\pi: \mathcal{O}\to H$ such that $(\pi \otimes \id_A)\lambda^{\mathcal{O}} = \lambda^H$ and $(\id_A \otimes \pi )\rho^{\mathcal{O}} = \rho^H$.  We call $\Otrans(A)$ the {\it transposed universal quantum linear semigroupoid (transposed UQSGd) of $A$}. 
\end{definition}

\begin{remark} \label{rem:A0comm}
We use the assumption that $A_0$ is commutative in Definition~\ref{def:manin-uqsgd}. Namely, by Proposition~\ref{prop:Hs-facts}\ref{itm:anti-iso}: $A_0 \cong H_t \cong H_s^{\text{op}} \cong A_0^{\text{op}}$ as $\kk$-algebras.
\end{remark}

\begin{question}[P. Etingof] \label{ques:A0comm}
Can the assumption that $A_0$ is commutative be removed by altering  Definition~\ref{def:manin-uqsgd} (so that the results in the remainder of the paper are unaffected)?
\end{question}

\begin{remark} \label{rem:gen-Manin-twoside}
For the same reasons as given in Remark~\ref{rem:gen-Manin-oneside}, we can see that the above definition is a generalization of the transposed UQSG from Definition~\ref{def:ManinUQSG-intro}\ref{itm:ManinUQSG-intro}.
\end{remark}

\begin{remark} \label{rem:uniqueness}
We only define the left/right/transposed UQSGd of $A$ up to weak bialgebra isomorphism, and it is unique (up to weak bialgebra isomorphism) if it exists.
\end{remark}

Now we show that if the left and right UQSGd of $A$ exist and are isomorphic to each other, then the transposed UQSGd of $A$ exists and is isomorphic to the left (or right) UQSGd. To proceed, consider the following terminology.

\begin{definition}
Let $H$ be a weak bialgebra and let $\lambda: A \to H \otimes A$ be a left coaction. We call this coaction \emph{inner-faithful} if, whenever $\lambda(A) \subseteq K \otimes A$ for some weak subbialgebra $K$, we must have that $K=H$. 
Right inner-faithful coactions are defined similarly.
\end{definition}

\begin{lemma} \label{lem:O-in-faith}
Take $A$ as in Hypothesis~\ref{hyp:A}, and suppose that $\Oleft(A)$ exists.
\begin{enumerate}[(a),font=\upshape]
    \item \label{itm:H-i-f} Suppose $H$ is a weak bialgebra that left coacts on $A$ with $A_0 \cong H_t$ in $\HA$. Then, $H$ coacts on $A$ inner-faithfully if and only if the weak bialgebra map $\pi: \Oleft(A) \to H$ (that arises from Definition~\ref{def:leftright-uqsgd}\ref{itm:left-uqsgd}) is surjective.
    
    \smallskip
    
    \item \label{itm:O-i-f} The weak bialgebra $\Oleft(A)$ left coacts on $A$ inner-faithfully.
\end{enumerate}
Similar statements hold for right (resp., transposed) coactions and for the UQSGd $\Oright(A)$ (resp., $\Otrans(A)$). 
\end{lemma}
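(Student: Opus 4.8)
\textbf{Proof proposal for Lemma~\ref{lem:O-in-faith}.}
The plan is to prove part~\ref{itm:H-i-f} first and then deduce part~\ref{itm:O-i-f} as an easy special case, since the inner-faithful coaction of $\Oleft(A)$ on $A$ should correspond to the identity map $\Oleft(A) \to \Oleft(A)$, which is certainly surjective. For part~\ref{itm:H-i-f}, write $\mathcal{O} := \Oleft(A)$ with universal left coaction $\lambda^{\mathcal{O}}: A \to \mathcal{O} \otimes A$, and let $\pi: \mathcal{O} \to H$ be the unique weak bialgebra map with $(\pi \otimes \id_A)\lambda^{\mathcal{O}} = \lambda^H$.

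For the ``if'' direction, suppose $\pi$ is surjective and that $\lambda^H(A) \subseteq K \otimes A$ for some weak subbialgebra $K \subseteq H$. I first need to check that $K$ satisfies the hypotheses of Definition~\ref{def:leftright-uqsgd}\ref{itm:left-uqsgd}: that is, $A$ is a left $K$-comodule algebra with $A_0 \cong K_t$ in ${}^{K}\hspace{-.04in}\mathcal{A}$. The comodule-algebra axioms for $K$ follow by restricting those for $H$ (using that $K$ is closed under the algebra and coalgebra operations of $H$), and for the base identification one uses Proposition~\ref{prop:Hs-facts}\ref{itm:preservesubalg}: the inclusion $K \hookrightarrow H$ is a nonzero weak bialgebra morphism, so $K_t \cong H_t \cong A_0$ as $\kk$-algebras; one then checks this is an isomorphism in ${}^{K}\hspace{-.04in}\mathcal{A}$ compatibly with $\lambda_0^H$, which holds because $\lambda_0^H$ already factors through $K$. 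Hence the universal property of $\mathcal{O}$ applies to $K$, giving a weak bialgebra map $\pi': \mathcal{O} \to K$ with $(\pi' \otimes \id_A)\lambda^{\mathcal{O}} = \lambda^K$, where $\lambda^K$ is $\lambda^H$ with codomain restricted to $K \otimes A$. Composing with the inclusion $\iota: K \hookrightarrow H$ gives $(\iota\pi' \otimes \id_A)\lambda^{\mathcal{O}} = \lambda^H$, so by the uniqueness clause in the universal property of $\mathcal{O}$ we get $\iota \pi' = \pi$. Since $\pi$ is surjective and factors through $K$, we conclude $K = H$.

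For the ``only if'' direction, suppose $H$ coacts inner-faithfully, and set $K := \pi(\mathcal{O}) \subseteq H$, which is a weak subbialgebra because $\pi$ is a weak bialgebra map. Then $\lambda^H(A) = (\pi \otimes \id_A)\lambda^{\mathcal{O}}(A) \subseteq K \otimes A$, so inner-faithfulness forces $K = H$, i.e.\ $\pi$ is surjective. Part~\ref{itm:O-i-f} then follows: the identity map $\mathrm{id}: \mathcal{O} \to \mathcal{O}$ is the unique weak bialgebra map arising from applying the universal property of $\mathcal{O}$ to $H = \mathcal{O}$ itself (uniqueness in Definition~\ref{def:leftright-uqsgd}\ref{itm:left-uqsgd}), and it is surjective, so by part~\ref{itm:H-i-f} the coaction $\lambda^{\mathcal{O}}$ is inner-faithful. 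The statements for $\Oright(A)$ and $\Otrans(A)$ are proved by the symmetric argument, using $H_s$ in place of $H_t$ (and, in the transposed case, requiring $K$ to inherit the transposed-comodule-algebra structure, which it does since the defining bases and structure constants $z_{j,k}^i$ of Definition~\ref{def:transposecoaction} already lie in $K$).

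\textbf{Main obstacle.} The one genuinely non-routine point is verifying that a weak subbialgebra $K$ through which the coaction factors again satisfies the ``base-preserving'' hypothesis $A_0 \cong K_t$ \emph{in the comodule-algebra category} ${}^{K}\hspace{-.04in}\mathcal{A}$, not merely as $\kk$-algebras. This is where Remark~\ref{rem:naive} shows the hypothesis is essential, so the argument must genuinely use that $\lambda_0^H$ takes values in $K_s \otimes A_0$ (equivalently that the $H$-comodule structure on $A_0$ restricts to $K$) together with Proposition~\ref{prop:Hs-facts}\ref{itm:preservesubalg}; I expect this compatibility check to be the crux, with everything else being formal manipulation of universal properties.
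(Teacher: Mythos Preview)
Your proposal is correct, and your ``only if'' direction and part~\ref{itm:O-i-f} match the paper's argument exactly (the paper phrases them as contrapositives, but the content is identical). Your ``if'' direction, however, takes a genuinely different route from the paper. The paper argues by contrapositive: assuming the $H$-coaction factors through a proper weak subbialgebra $K$, it asserts that $\operatorname{im}(\pi)$ ``consists of the coefficients of $\lambda^K$ in $K$'' and hence cannot be all of $H$. This step is not fully justified as written---it tacitly uses that $\mathcal{O}$ is generated as a weak bialgebra by its coaction coefficients, which is essentially the content of part~\ref{itm:O-i-f}. Your approach avoids this by applying the universal property of $\mathcal{O}$ directly to $K$ and then using uniqueness to factor $\pi$ through $K$.

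The obstacle you flag is real but resolvable, and you have the right ingredients. Since a weak subbialgebra $K\subseteq H$ shares the unit $1_K=1_H$, the description of $H_t$ in Proposition~\ref{prop:Hs-facts}\ref{itm:coideal} together with Proposition~\ref{prop:Hs-facts}\ref{itm:preservesubalg} gives $K_t=H_t$ as subspaces of $H$, and the $K$-comodule structure on $K_t$ is just $\Delta_H|_{K_t}$. The given isomorphism $\psi\colon A_0\xrightarrow{\sim} H_t$ in $\HA$ is then literally the same map into $K_t$, and since both $\lambda_0^H$ and $\Delta_H|_{H_t}$ already land in $K\otimes(-)$, the $H$-comodule identity $(\id\otimes\psi)\lambda_0^H=\Delta_H\psi$ restricts to the required $K$-comodule identity. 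So your route is more careful than the paper's and closes a gap the paper leaves implicit.
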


\begin{proof}
\ref{itm:H-i-f} If $\pi$ is not surjective, then let $K:=\text{im}(\pi)$ which is a proper weak subbialgebra of $H$. We get that $K$ left coacts on $A$ via $\lambda^K = (\pi \otimes \id_A) \lambda^{\Oleft(A)}: A \to K \otimes A$. Therefore, $H$ does not left coact on $A$ inner-faithfully.

Conversely, suppose that $H$ does not coact on $A$ inner-faithfully, and that there exists a proper weak subbialgebra $K$ of $H$ (via inclusion $\iota$) so that the coaction of $K$ on $A$ factors through $H$ on $A$. Then,
$(\pi \otimes \id_A) \lambda^{\Oleft(A)} = \lambda^H = (\iota \otimes \id_A) \lambda^K.$
Now the $\text{im}(\pi)$ consists of the coefficients of $\lambda^K$ in $K$. So, $\text{im}(\pi)$ cannot be $H$, and $\pi$ is not surjective.

\smallskip

\ref{itm:O-i-f} This follows from part~\ref{itm:H-i-f} by taking $\pi = \id_{\Oleft(A)}$.
\end{proof}

\begin{proposition} \label{prop:left=right}
Suppose that $\Oleft(A)$ and $\Oright(A)$ exist, and let $\Ocal(A):=\Oleft(A).$ Suppose that $\Oleft(A) \cong \Oright(A)$ as weak bialgebras, and that their respective coactions on $A$ are transpose. Then $\Otrans(A)$ exists, and $\Otrans(A) \cong \mathcal{O}(A)$ as weak bialgebras.
\end{proposition}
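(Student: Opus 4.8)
The plan is to verify that $\Ocal(A) := \Oleft(A)$, equipped with both its left coaction $\lambda^{\Ocal}$ and the right coaction $\rho^{\Ocal}$ obtained by transposing $\lambda^{\Ocal}$ (using the isomorphism $\Oleft(A) \cong \Oright(A)$), satisfies the defining universal property of $\Otrans(A)$ from Definition~\ref{def:manin-uqsgd}. So first I would record that, by hypothesis, $A$ is a transposed $\Ocal(A)$-comodule algebra, that $\Ocal(A)_t \cong A_0$ in ${}^{\Ocal(A)}\hspace{-.04in}\mathcal{A}$ (since $\Ocal(A) = \Oleft(A)$), and that $\Ocal(A)_s \cong A_0$ in $\mathcal{A}^{\Ocal(A)}$ (transporting the isomorphism $\Oright(A)_s \cong A_0$ along $\Oleft(A)\cong\Oright(A)$ and using Proposition~\ref{prop:Hs-facts}\ref{itm:preservesubalg}). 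This shows $\Ocal(A)$ is a legitimate candidate for $\Otrans(A)$; it remains to check the factorization property.

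Next, let $H$ be any weak bialgebra for which $A$ is a transposed $H$-comodule algebra with $H_t \cong A_0$ in $\HA$ and $H_s \cong A_0$ in $\AH$. In particular, $H$ left coacts linearly on $A$ with $H_t \cong A_0$, so the universal property of $\Oleft(A)$ gives a unique weak bialgebra map $\pi_\ell : \Oleft(A) \to H$ with $(\pi_\ell \otimes \id_A)\lambda^{\Ocal} = \lambda^H$; likewise $H$ right coacts linearly on $A$ with $H_s \cong A_0$, so the universal property of $\Oright(A)$ gives a unique $\pi_r : \Oright(A) \to H$ with $(\id_A \otimes \pi_r)\rho^{\Oright(A)} = \rho^H$. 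Composing $\pi_r$ with the isomorphism $\Oleft(A) \cong \Oright(A)$ yields a weak bialgebra map $\tilde\pi_r : \Oleft(A) \to H$ satisfying $(\id_A \otimes \tilde\pi_r)\rho^{\Ocal} = \rho^H$ (here I use that the isomorphism $\Oleft(A) \cong \Oright(A)$ intertwines $\lambda^{\Ocal}$ transposed with $\rho^{\Oright(A)}$, which is exactly the assumption that ``their respective coactions on $A$ are transpose''). The candidate map is then $\pi := \pi_\ell$, and the crux is to show $\pi_\ell = \tilde\pi_r$, so that this single $\pi$ simultaneously satisfies $(\pi \otimes \id_A)\lambda^{\Ocal} = \lambda^H$ and $(\id_A \otimes \pi)\rho^{\Ocal} = \rho^H$.

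The main obstacle — and the heart of the argument — is precisely the identity $\pi_\ell = \tilde\pi_r$. The plan is to exploit that $A$ is a \emph{transposed} comodule algebra on both sides: for each $i$ there is a basis $\{v^i_j\}$ of $A_i$ and elements $z^i_{j,k} \in \Ocal(A)$ with $\lambda^{\Ocal}_i(v^i_j) = \sum_k z^i_{j,k} \otimes v^i_k$ and $\rho^{\Ocal}_i(v^i_j) = \sum_k v^i_k \otimes z^i_{k,j}$, and similarly $\lambda^H_i(v^i_j) = \sum_k w^i_{j,k}\otimes v^i_k$, $\rho^H_i(v^i_j) = \sum_k v^i_k \otimes w^i_{k,j}$ for the same basis (up to the identification of a basis with its dual, as in Definition~\ref{def:manin-uqsgd}; one should choose the bases on $H$'s side compatibly). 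Then $(\pi_\ell \otimes \id_A)\lambda^{\Ocal} = \lambda^H$ reads $\pi_\ell(z^i_{j,k}) = w^i_{j,k}$ for all $i,j,k$, while $(\id_A \otimes \tilde\pi_r)\rho^{\Ocal} = \rho^H$ reads $\tilde\pi_r(z^i_{k,j}) = w^i_{k,j}$ for all $i,j,k$ — i.e., the same system of equations. Since the elements $\{z^i_{j,k}\}$ generate $\Ocal(A)$ as an algebra (this should be extracted from the construction of $\Oleft(A)$, or noted as a consequence of inner-faithfulness via Lemma~\ref{lem:O-in-faith}\ref{itm:O-i-f}: the sub-weak-bialgebra generated by the $z^i_{j,k}$ carries the coaction, hence is all of $\Ocal(A)$), two weak bialgebra maps agreeing on all $z^i_{j,k}$ must be equal; hence $\pi_\ell = \tilde\pi_r =: \pi$. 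Finally, uniqueness of $\pi$ follows from the uniqueness clause in the universal property of $\Oleft(A)$ applied to the left-coaction condition alone, completing the verification that $\Ocal(A)$ satisfies Definition~\ref{def:manin-uqsgd}; by Remark~\ref{rem:uniqueness}, $\Otrans(A) \cong \Ocal(A)$ as weak bialgebras.
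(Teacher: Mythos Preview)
Your proposal is correct and follows essentially the same approach as the paper: obtain $\pi_\ell$ and $\pi_r$ from the one-sided universal properties, compare them on the matrix coefficients $z^i_{j,k}$ using the transposed structure, and conclude $\pi_\ell = \tilde\pi_r$ via inner-faithfulness (Lemma~\ref{lem:O-in-faith}) to see that the $z^i_{j,k}$ generate $\Ocal(A)$. If anything, you are slightly more careful than the paper in explicitly recording why $\Ocal(A)_s \cong A_0$ in $\mathcal{A}^{\Ocal(A)}$; the paper simply assumes $\Oleft(A)=\Oright(A)$ for convenience and proceeds directly to the factorization argument.
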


\begin{proof}
 Assume that $\Ocal(A):=\Oleft(A)$ and $\Oright(A)$ exist. For simplicity of proof, assume that  $\Ocal(A) = \Oright(A)$ as weak bialgebras (instead of using an isomorphism). Now, suppose that we have a weak bialgebra $H$ that left coacts and right coacts  (via transposed coactions $\lambda^H: A \to H \otimes A, \;\rho^H: A \to A \otimes H$) with the property that $H_s \cong A_0$ in $\AH$ and $H_t \cong A_0$ in $\HA$. We will show that $\Ocal(A)$ satisfies the universal property described in \cref{def:manin-uqsgd}; therefore, $\Otrans(A)$ exists and $\Otrans(A) \cong \Ocal(A)$  as weak bialgebras. 

Since $\Ocal(A):=\Oleft(A)$ and $\Oright(A)$ exist, we have the following maps:
$$\lambda^L: A \to \Ocal(A) \otimes A,  \quad  \quad
\pi^L: \Ocal(A) \to H, \quad \quad
\rho^R: A \to A \otimes \Ocal(A), \quad\quad
 \pi^R: \Ocal(A) \to H.$$
with the property that $\Ocal(A)$ left coacts  on $A$ via $\lambda^L$, $\Oright(A)$ right coacts  on $A$ via $\rho^R$, $\lambda^L$ and $\rho^R$ are transposed coactions, and $\pi^L$ and $\pi^R$ are the unique weak bialgebra maps satisfying the following equations:
\begin{equation*}
    \label{eq:LRpi}
    (\pi^L \otimes \id_A)\lambda^L = \lambda^H, \qquad (\id_A \otimes \pi^R)\rho^R = \rho^H.
\end{equation*}
We make the following definitions:
\begin{equation*}
    \label{eq:deflambdarhopi}
    \lambda:= \lambda^L: A \to \Ocal(A) \otimes A, \qquad \rho:= \rho^R: A \to A \otimes \Ocal(A), \qquad \pi:=\pi^L: \Ocal(A) \to H. 
\end{equation*}
We will prove that $\pi$ is the unique weak bialgebra map such that $(\id_A \otimes \pi )\rho = \rho^H$. This will imply  that $\Ocal(A)$ has the universal property of \cref{def:manin-uqsgd}, so we must have $\Ocal(A) \cong \Otrans(A) $  as weak bialgebras. In fact, since $\pi^R$ is the unique weak bialgebra map such that $(\id_A \otimes \pi^R)\rho = \rho^H,$ it suffices to show that $\pi=\pi^R.$

Since $\lambda^H$ and $\rho^H$ are transposed coactions, for each $i$ there exists a basis $\{v_j^i\}_{1\leq j \leq \dim A_i}$ of $A_i$ such that the restricted coactions can be written in the following form:
\begin{align*}
    \lambda^H_i: A_i &\to H \otimes A_i & \quad  \rho^H_i: A_i &\to A_i \otimes H \\
    v_j^i &\mapsto \textstyle \sum_{1\leq k\leq \dim A_i} z_{j,k}^i \otimes v_k^i & \quad  v_j^i &\mapsto \textstyle \sum_{1\leq k\leq \dim A_i} v_k^i \otimes z_{k,j}^i,
\end{align*}
for some $z_{j,k}^i \in H$. Since $\{v_j^i\}$ is a basis for $A_i$ and the coactions $\lambda,\rho$ are transpose, we can write
\begin{align*}
    \lambda_i: A_i &\to \Ocal(A) \otimes A_i & \quad  \rho_i: A_i &\to A_i \otimes \Ocal(A) \\
    v_j^i &\mapsto \textstyle \sum_{1\leq k\leq \dim A_i} y_{j,k}^i \otimes v_k^i & \quad  v_j^i &\mapsto \textstyle \sum_{1\leq k\leq \dim A_i} v_k^i \otimes y_{k,j}^i,
\end{align*}
for some $y_{j,k}^i \in \Ocal(A)$.

By the previous claims, we know that $(\pi\otimes \id_A)\lambda = \lambda^H$ and $(\id_A \otimes \pi^R)\rho=\rho^H$. Therefore, for each $v_j^i$ we have
\begin{align*}
   \textstyle \sum_{1 \leq k \leq \dim A_i} \pi(y_{j,k}^i) \otimes v_k^i \;= \;(\pi\otimes \id_A)\lambda_i(v_j^i) 
    \;=\;\lambda_i^H(v_j^i) 
    \;=\;\sum_{1\leq k\leq \dim A_i} z_{j,k}^i \otimes v_k^i.
\end{align*}
Since the $\{v_k^i\}$ are a basis for $A_i$, we know that $\pi(y_{j,k}^i)=z_{j,k}^i$ for each $i,j,k.$ Similarly, for each $v_j^i$ we have
\begin{align*}
  \textstyle  \sum_{1 \leq k \leq \dim A_i} v_k^i \otimes \pi^R(y_{k,j}^i) \;=\; (\id_A \otimes \pi^R)\rho_i(v_j^i) 
    \;=\;\rho_i^H(v_j^i) 
   \; =\; \sum_{1 \leq k \leq \dim A_i} v_k^i \otimes z_{k,j}^i.
\end{align*}
Since the $\{v_k^i\}$ are a basis for $A_i$, we know that $\pi^R(y_{k,j}^i)=z_{k,j}^i$ for each $i,j,k.$ Therefore, for each $i,j,k$, we have
\[\pi^R(y_{j,k}^i)=z_{j,k}^i=\pi(y_{j,k}^i).\]
Since the coactions are inner-faithful by Lemma~\ref{lem:O-in-faith}, $\Ocal(A)$ is generated as an algebra by the $y_{j,k}^i$. (Else, there exists an algebra generator of $\Ocal(A)$ not in the set $\{y_{j,k}^i\}$ and the proper weak subbialgebra generated by the $y_{j,k}^i$ coacts on $A$, contradicting the inner-faithfulness of the coaction of $\Ocal(A)$ on $A$.) Finally, $\pi$ and $\pi^R$ are algebra maps, so we must have $\pi = \pi^R$, as desired.
\end{proof}


\section{Universal quantum linear semigroupoids of a path algebra} \label{sec:main}

In this section, we will prove our main theorem, Theorem~\ref{thm:main-wba}, constructing the left, right, and transposed UQSGds of the path algebra $\kk Q$ of a finite quiver $Q$. Furthermore, we will show that all three are isomorphic to Hayashi's face algebra $\mathfrak{H}(Q)$ (Example~\ref{ex:hay}). Note that the coactions in the following hypothesis is a specific case of that in the Definition~\ref{def:transposecoaction}. In several of our results, we will assume one of the three hypotheses given below. 

\begin{hypothesis} 
\label{hyp:coactions}
Let $Q$ be a finite quiver, and let $(H,m,u,\Delta,\ep)$ be a weak bialgebra. Consider the following formulas for each $j \in Q_0$ and each $q \in Q_1$,
\begin{align*}
\lambda: \kk Q &\to H \otimes \kk Q & \rho: \kk Q &\to \kk Q \otimes H \\
 e_j &\mapsto \textstyle \sum_{i \in Q_0} y_{j,i} \otimes e_i & e_j &\mapsto \textstyle \sum_{i \in Q_0}  e_i \otimes y_{i,j}\\
 q &\mapsto \textstyle \sum_{p \in Q_1} y_{q,p} \otimes p  & q &\mapsto \textstyle \sum_{p \in Q_1}  p\otimes y_{p,q}
\end{align*}
for some elements $y_{i,j} \in H_0$ and $y_{p,q} \in H_1$. We will consider three separate hypotheses in the sequel.
\begin{enumerate}[(a),font=\upshape]
    \item Assume that $\kk Q$ is a left $H$-comodule algebra via a coaction of the form $\lambda$. \label{hyp:coactions-L}
    \item \label{hyp:coactions-R}Assume that $\kk Q$ is a right $H$-comodule algebra via a coaction of the form $\rho$.
    \item \label{hyp:coactions-Manin} Assume that $\kk Q$ is a transposed $H$-comodule algebra via $\lambda$ and  $\rho$.
\end{enumerate}
\end{hypothesis}

The following results will be of use in this section.

\begin{lemma} \label{lem:wba-structure}
Let $H$ be a weak bialgebra which coacts on $\kk Q$ as in \cref{hyp:coactions}. Consider the following formulas for any $i,j,k\in Q_0$ and $p,q \in Q_1$:
\begin{align}
\label{eq:coalgreln1}
\textstyle \Delta(y_{i,j}) = \sum_{k \in Q_0} y_{i,k} \otimes y_{k,j}, \quad & \quad \varepsilon(y_{i,j}) = \delta_{i,j} \cdot 1_\kk, \\
\label{eq:coalgreln2} \textstyle
\Delta(y_{p,q}) = \sum_{r \in Q_1} y_{p,r} \otimes y_{r,q},  \quad &\quad \varepsilon(y_{p,q}) = \delta_{p,q} \cdot 1_\kk,\\
\label{eq:algreln1}
y_{k,i}y_{k,j}&=\delta_{i,j}y_{k,i} \\
\label{eq:algreln2}
y_{i,k}y_{j,k}&=\delta_{i,j}y_{i,k} \\
\label{eq:algreln3}
y_{s(p),s(q)}y_{p,q}&=y_{p,q} \\
\label{eq:algreln4}
y_{p,q}y_{t(p),t(q)}&=y_{p,q}.
\end{align}
\begin{enumerate}[(a),font=\upshape]
    \item \label{lem:wba-structure-a} If \cref{hyp:coactions}\ref{hyp:coactions-L} holds, then $H$ satisfies \eqref{eq:coalgreln1}, \eqref{eq:coalgreln2}, \eqref{eq:algreln2}, \eqref{eq:algreln3}, and \eqref{eq:algreln4}.
    \item \label{lem:wba-structure-b} If \cref{hyp:coactions}\ref{hyp:coactions-R} holds, then $H$ satisfies \eqref{eq:coalgreln1}, \eqref{eq:coalgreln2}, \eqref{eq:algreln1}, \eqref{eq:algreln3}, and \eqref{eq:algreln4}.
    \item \label{lem:wba-structure-c} If \cref{hyp:coactions}\ref{hyp:coactions-Manin} holds, then $H$ satisfies \eqref{eq:coalgreln1} to \eqref{eq:algreln4}.
\end{enumerate}
\end{lemma}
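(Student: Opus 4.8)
The plan is to extract each relation by applying the comodule algebra axioms to specific elements of $\kk Q$ and then comparing coefficients with respect to a fixed basis of $\kk Q$. Throughout, I would use that $\{e_i\}_{i\in Q_0} \cup \{q\}_{q\in Q_1} \cup (\text{higher-degree paths})$ is a $\kk$-basis of $\kk Q$, so that equalities in $H\otimes\kk Q$ or $\kk Q\otimes H$ can be read off coordinate-wise, and similarly that the $y_{i,j}$'s and $y_{p,q}$'s are the structure constants appearing in the coaction formulas of \cref{hyp:coactions}.

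\textbf{Coalgebra relations \eqref{eq:coalgreln1}, \eqref{eq:coalgreln2}.} These hold in all three cases and follow purely from the coassociativity and counitality of the comodule structure map, which is part of the definition of a (left or right) $H$-comodule. For the left case, coassociativity $(\Delta\otimes\id_{\kk Q})\lambda = (\id_H\otimes\lambda)\lambda$ applied to $e_j$ gives $\sum_{i}\Delta(y_{j,i})\otimes e_i = \sum_{i,k} y_{j,k}\otimes y_{k,i}\otimes e_i$; comparing the coefficient of $e_i$ yields $\Delta(y_{j,i}) = \sum_k y_{j,k}\otimes y_{k,i}$, which is \eqref{eq:coalgreln1} after relabeling. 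Counitality $(\ep\otimes\id_{\kk Q})\lambda(e_j) = e_j$ gives $\sum_i \ep(y_{j,i}) e_i = e_j$, hence $\ep(y_{j,i}) = \delta_{i,j}$. The same computation applied to $q\in Q_1$ gives \eqref{eq:coalgreln2}. The right-comodule case is identical with the roles of tensor factors swapped, and the transposed case subsumes both.

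\textbf{Algebra (multiplicativity) relations.} These come from applying the multiplicativity axiom \eqref{def:Halg-L-mult} (resp.\ its right analogue) to products of basis elements and reading off coefficients. In the left case: from $e_i e_j = \delta_{i,j} e_i$ we get $\lambda(e_i)\lambda(e_j) = \delta_{i,j}\lambda(e_i)$, i.e.\ $\sum_{k,\ell} y_{i,k}y_{j,\ell}\otimes e_k e_\ell = \sum_{k,\ell}\delta_{k,\ell} y_{i,k}y_{j,\ell}\otimes e_k = \delta_{i,j}\sum_k y_{i,k}\otimes e_k$; comparing coefficients of $e_k$ gives $y_{i,k}y_{j,k} = \delta_{i,j} y_{i,k}$, which is \eqref{eq:algreln2}. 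For \eqref{eq:algreln3} and \eqref{eq:algreln4}, use $e_{s(q)} q = q$ and $q\, e_{t(q)} = q$ in $\kk Q$: applying $\lambda$ to $e_{s(q)} q = q$ gives $\sum_{i,p} y_{s(q),i} y_{q,p}\otimes e_i p = \sum_p y_{q,p}\otimes p$; since $e_i p = \delta_{i,s(p)} p$, the left side is $\sum_p y_{s(q),s(p)} y_{q,p}\otimes p$, and comparing coefficients of $p$ gives $y_{s(q),s(p)} y_{q,p} = y_{q,p}$ — but we must also check that the only surviving terms force $s(p)=s(q)$ in the relevant sense; in fact the relation \eqref{eq:algreln3} as stated is precisely $y_{s(p),s(q)} y_{p,q} = y_{p,q}$, obtained by instead applying $\lambda$ to $e_{s(p)} p = p$ wait — more carefully, one applies $\lambda$ to the identity $e_{s(q)}\, q = q$ and reads the coefficient of each arrow $p$, yielding $y_{s(q),s(p)}\, y_{q,p} = y_{q,p}$ for all $p,q$; after the symmetric relabeling $p\leftrightarrow q$ this is exactly \eqref{eq:algreln3}. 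Relation \eqref{eq:algreln4} comes analogously from $q\, e_{t(q)} = q$. For the right-comodule case (part (b)), the same scheme using $\rho$ produces \eqref{eq:algreln1}, \eqref{eq:algreln3}, \eqref{eq:algreln4} instead, the asymmetry being that in $a\otimes b \mapsto$ "first leg is the path, second leg is $H$" the index bookkeeping on $e_ie_j$ now pairs the $j$-indices, giving $y_{k,i}y_{k,j}=\delta_{i,j}y_{k,i}$. Part (c) follows since a transposed $H$-comodule algebra is simultaneously a left and right $H$-comodule algebra, so we get every relation from (a) and (b) together.

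\textbf{Main obstacle.} The computations themselves are routine coefficient-matching; the only genuinely delicate point is the careful handling of the $\delta$-functions encoding composability of arrows (the $t(p)=s(q)$ conditions built into the multiplication of $\kk Q$), making sure one extracts exactly the stated relations \eqref{eq:algreln3}, \eqref{eq:algreln4} rather than a superficially different-looking variant, and correctly tracking which side of the tensor product carries the path versus the $H$-element in the left versus right cases. I would also verify that the unit-compatibility conditions \eqref{def:Halg-L-unit} (and its right analogue) are consistent with the coaction formulas — i.e.\ that $\lambda(1_{\kk Q}) = \sum_{i,j} y_{j,i}\otimes e_i$ lies in $H_s\otimes \kk Q$ — but this is not needed to derive the relations in the lemma and would be addressed separately when constructing the UQSGd itself.
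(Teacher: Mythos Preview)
Your proposal is correct and follows essentially the same route as the paper: coalgebra relations from coassociativity/counitality of the coaction, algebra relations from multiplicativity applied to $e_ie_j=\delta_{i,j}e_i$, $e_{s(p)}p=p$, and $pe_{t(p)}=p$, with (b) symmetric and (c) the union of (a) and (b). The only cosmetic difference is that the paper applies $\lambda$ to $e_{s(p)}p=p$ directly (with $p$ as the fixed arrow and $q$ as the summation index), obtaining \eqref{eq:algreln3} without the relabeling step you needed; your stream-of-consciousness detour around which variable is which is harmless but could be tidied by adopting that convention from the start.
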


\begin{proof}
We will prove \ref{lem:wba-structure-a}. The proof for \ref{lem:wba-structure-b} is similar and hence omitted, while \ref{lem:wba-structure-c} follows from \ref{lem:wba-structure-a} and \ref{lem:wba-structure-b}.

The formulas for $\Delta$ and $\varepsilon$ follow from the coassociativity and counitality of $\lambda$. For example, for $i \in Q_0$,
\[
\begin{array}{rl}
\medskip
\textstyle
\sum_{j \in Q_0}\Delta(y_{i,j}) \otimes e_j
&= (\Delta \otimes \id)\lambda(e_i) 
= (\id \otimes \lambda)\lambda(e_i)\\
&= \sum_{k \in Q_0}  y_{i,k} \otimes \lambda(e_k)
= \sum_{j,k \in Q_0}  y_{i,k} \otimes y_{k,j} \otimes e_j.
\end{array}
\]
Since the $e_j$ are linearly independent, we have that $\Delta(y_{i,j}) = \sum_{k \in Q_0} y_{i,k} \otimes y_{k,j}$.
Further, since $e_i = \id_{\kk Q}(e_i) = (\varepsilon \otimes \id_{\kk Q})\lambda(e_i) = \textstyle \sum_{j\in Q_0} \varepsilon(y_{i,j}) e_j$, we conclude that $\varepsilon(y_{i,j}) = \delta_{i,j} \cdot 1_\kk$. This proves \eqref{eq:coalgreln1}; the proof for \eqref{eq:coalgreln2} is similar.

Next, we will use the fact that $\kk Q$ is a left $H$-comodule algebra. We have
\begin{align*}
 \textstyle\sum_{k\in Q_0}\delta_{i,j}y_{i,k}\otimes e_k &=
\lambda(\delta_{i,j}e_i) \\
&=\lambda(e_ie_j)\\ &\overset{\eqref{def:Halg-L-mult}}{=}\lambda(e_i)\lambda(e_j) \\
&=\l(\textstyle \sum_{k\in Q_0} y_{i,k} \otimes e_k\r)\l(\textstyle \sum_{\ell\in Q_0}y_{j,\ell} \otimes e_{\ell}\r) \\
&=\textstyle \sum_{k,{\ell} \in Q_0} y_{i,k}y_{j,{\ell}} \otimes e_ke_{\ell} \\
&= \textstyle \sum_{k\in Q_0} y_{i,k}y_{j,k} \otimes e_k.
\end{align*}
Since the $e_k$ are linearly independent, we must have $\delta_{i,j}y_{i,k}=y_{i,k}y_{j,k},$ that is, \eqref{eq:algreln2} holds.

To show \eqref{eq:algreln3}, notice that for $p\in Q_1$ we have
\begin{align*}
\textstyle \sum_{q\in Q_1} y_{p,q}\otimes q
&= \lambda(p) \\
&=\lambda(e_{s(p)}p) \\
&= \lambda(e_{s(p)})\lambda(p) \\
&=\l(\textstyle \sum_{i \in Q_0} y_{s(p),i}\otimes e_i\r)\l(\textstyle \sum_{q\in Q_1} y_{p,q}\otimes q\r) \\
&= \textstyle \sum_{i \in Q_0, q \in Q_1} y_{s(p),i}y_{p,q} \otimes e_iq \\
&= \textstyle \sum_{q\in Q_1} y_{s(p),s(q)}y_{p,q} \otimes q.
\end{align*}
By linear independence of the set $\{q\}_{q \in Q_1}$, we have $y_{p,q}=y_{s(p),s(q)}y_{p,q}$ for all $p,q \in Q_1$.

 We use the relation $p=pe_{t(p)}$ for $p\in Q_1$ to prove \eqref{eq:algreln4} in the same manner as \eqref{eq:algreln3}.
\end{proof}

The following proposition is a collection of identities that hold if we only assume the existence of a left $H$-coaction making $\kk Q$ an $H$-comodule algebra.

\begin{proposition} \label{prop:wba-idempotent-L}
Let $H$ be a weak bialgebra which coacts on $\kk Q$ as in \cref{hyp:coactions}\ref{hyp:coactions-L}. For each $j \in Q_0$, consider the elements
$$
\eta_j:=\textstyle \sum_{i \in Q_0} y_{i,j}, \qquad \theta_j:=\textstyle \sum_{i \in Q_0} y_{j,i}.
$$
The following statements hold. 
\begin{enumerate}[(a),font=\upshape]
\item  \label{itm:nonzero}
For each $j \in Q_0$, $\eta_j$ and $\theta_j$ are non-zero elements of $H$.
\item \label{itm:idem-L} For each $j \in Q_0$, $\eta_j$ is an idempotent element of $H_s$.
\item \label{itm:psi-L}
If $\kk Q_0\cong_{\psi} H_t$ as left $H$-comodule algebras, then the following statements hold: 
    \begin{enumerate}[(i), font=\upshape]
        \item \label{eq:eta-basis} $\{\eta_k\}_{k \in Q_0}$ is a $\kk$-basis of $H_s$.
        \item \label{eq:identity-L} $1_H=\sum_{i,j\in Q_0} y_{i,j}.$
        \item \label{eq:psiek=theta-L} For each $k \in Q_0$, $\psi(e_k)=\theta_k$; hence $\theta_k \in H_t$.
        \item \label{eq:thetaorthidem-L} The set $\{\theta_i\}_{i\in Q_0}$ is a $\kk$-basis for $H_t$ of orthogonal idempotent elements.
        \item
        \label{eq:etaorthidem-L} The set  $\{\eta_j\}_{j\in Q_0}$ is a $\kk$-basis for $H_s$ of orthogonal idempotent elements.
        \item \label{eq:crossitem-L} For all $i,j,k,{\ell} \in Q_0$, $y_{i,j}y_{k,{\ell}}=\delta_{i,k}\delta_{j,{\ell}}y_{i,j}$.
    \end{enumerate}
\end{enumerate}
\end{proposition}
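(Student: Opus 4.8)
The plan is to work through the six parts of \ref{prop:wba-idempotent-L}\ref{itm:psi-L} roughly in the order listed, using the coalgebra and algebra relations from Lemma~\ref{lem:wba-structure}\ref{lem:wba-structure-a} together with the structure theory of counital subalgebras in Proposition~\ref{prop:Hs-facts}. First, for parts \ref{itm:nonzero} and \ref{itm:idem-L}: applying the counit $\varepsilon$ to $\lambda(e_j)$ gives $e_j = \sum_i \varepsilon(y_{j,i})e_i$, so $\varepsilon(y_{j,j}) = 1_\kk$ and hence $y_{j,j} \neq 0$, which forces both $\eta_j$ and $\theta_j$ to be nonzero (a cleaner route: $\sum_j \theta_j = \sum_{i,j} y_{j,i}$ and one checks this is $1_H$ using the unit compatibility, but nonzero-ness of $y_{j,j}$ already suffices). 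For \ref{itm:idem-L}, I would compute $\eta_j^2 = \sum_{i,k} y_{i,j}y_{k,j} = \sum_i y_{i,j} = \eta_j$ using \eqref{eq:algreln2}; to see $\eta_j \in H_s$, I would use the characterization $H_s = \{(\id \otimes \varphi)\Delta(1) : \varphi \in H^*\}$ from Proposition~\ref{prop:Hs-facts}\ref{itm:coideal}, after first establishing that $1_H = \sum_{i,j} y_{i,j}$, or alternatively verify directly that $\varepsilon_s(\eta_j) = \eta_j$ using \eqref{eq:coalgreln1} and the formula for $\varepsilon_s$.

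For part \ref{itm:psi-L}, the key leverage is the hypothesis $\kk Q_0 \cong_\psi H_t$ as left $H$-comodule algebras. The left coaction on $H_t$ is $\Delta|_{H_t}$, and the left coaction on $\kk Q_0$ is $\lambda_0 : e_j \mapsto \sum_i y_{j,i} \otimes e_i$. So $\psi$ being a comodule map means $(\id \otimes \psi)\lambda_0(e_j) = \Delta(\psi(e_j))$, i.e. $\Delta(\psi(e_j)) = \sum_i y_{j,i} \otimes \psi(e_i)$. Since $\psi$ is also an algebra isomorphism, $\{\psi(e_k)\}$ is a complete set of orthogonal idempotents spanning $H_t$, and $\psi(\sum_k e_k) = \psi(1_{\kk Q_0}) = 1_{H_t}$. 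I would first pin down $1_H = \sum_{i,j} y_{i,j}$ \ref{eq:identity-L}: apply $\lambda$ to $1_{\kk Q} = \sum_j e_j$, note $\lambda(1_{\kk Q}) \in H_s \otimes \kk Q$ by \eqref{def:Halg-L-unit}, and combine with $\sum_j \theta_j \otimes (\text{something})$... actually more directly, since $\lambda$ is a comodule algebra map and $\lambda(1_{\kk Q}) = \sum_{i,j} y_{j,i}\otimes e_i$ must act as identity, or use that $H_s$ has basis giving the right dimension count. Then \ref{eq:eta-basis}: the $\eta_k$ are idempotents in $H_s$ by \ref{itm:idem-L}, they are orthogonal by \eqref{eq:algreln1}—wait, \eqref{eq:algreln1} is NOT available under \ref{hyp:coactions-L}, only \eqref{eq:algreln2}. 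So orthogonality of the $\eta_k$ needs a separate argument. Here I would use the isomorphism: $\dim_\kk H_s = \dim_\kk H_t = \dim_\kk \kk Q_0 = |Q_0|$ by Proposition~\ref{prop:Hs-facts}\ref{itm:anti-iso} and the hypothesis, and since $\{\eta_k\}_{k\in Q_0}$ is a set of $|Q_0|$ elements spanning $H_s$ (because $1_H = \sum_k \eta_k'$... need $\eta_k$'s span; actually $\sum_k$ over which index?), I need to be careful about whether $\{\eta_j\}$ spans $H_s$.

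The main obstacle, as I see it, is precisely this dimension-counting and spanning argument: establishing that $\{\eta_k\}_{k\in Q_0}$ (resp.\ $\{\theta_k\}$) is genuinely a basis of $H_s$ (resp.\ $H_t$), and not just a spanning set that might have linear dependencies, using only the one-sided relations available. The clean path is: show $\psi(e_k) = \theta_k$ \ref{eq:psiek=theta-L} by using the comodule-map property $\Delta(\psi(e_k)) = \sum_i y_{k,i}\otimes \psi(e_i)$ and applying $(\varepsilon \otimes \id)$ or $(\id \otimes \varepsilon)$ to extract $\psi(e_k)$; once $\psi(e_k) = \theta_k$, part \ref{eq:thetaorthidem-L} is immediate since $\psi$ is an algebra isomorphism carrying the orthogonal idempotents $e_k$ to the $\theta_k$, and $\dim_\kk H_t = |Q_0|$. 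Then \ref{eq:etaorthidem-L} follows by transporting through the anti-isomorphism $\varepsilon_t : H_s \to H_t$ of Proposition~\ref{prop:Hs-facts}\ref{itm:anti-iso}, or by a direct argument, giving \ref{eq:eta-basis} as well. Finally \ref{eq:crossitem-L}: for distinct pairs, I would multiply out $y_{i,j}y_{k,\ell}$ and use $y_{i,j} = y_{i,j}\theta_j$-type absorption relations coming from $\theta_j$ being a central-ish idempotent together with \eqref{eq:algreln3}, \eqref{eq:algreln4} extended to $Q_0$ (via $e_j = e_{s}(e_j)$ trivially, so really via the established orthogonality of $\{\theta_j\}$ and $\{\eta_j\}$): writing $y_{i,j} \in \eta_j H_t \cap H_s\theta_i$ or similar, so that $y_{i,j}y_{k,\ell}$ vanishes unless $j$ matches the left-index constraint of $y_{k,\ell}$, i.e.\ unless $i=k$ and $j=\ell$, and equals $y_{i,j}$ in that case by \eqref{eq:algreln2}. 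I would order the steps: \ref{eq:identity-L}, then \ref{eq:psiek=theta-L}, then \ref{eq:thetaorthidem-L}, then \ref{eq:etaorthidem-L} and \ref{eq:eta-basis}, and finally \ref{eq:crossitem-L}.
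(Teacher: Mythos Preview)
Your outline for parts \ref{itm:nonzero}, \ref{itm:idem-L}, and \ref{eq:thetaorthidem-L}--\ref{eq:crossitem-L} is essentially the paper's approach, modulo two small corrections: for \ref{itm:nonzero}, $y_{j,j}\neq 0$ does not by itself force $\eta_j=\sum_i y_{i,j}\neq 0$ (there could be cancellation), but $\varepsilon(\eta_j)=\sum_i\delta_{i,j}=1$ does, which is what the paper uses; and for $\eta_j\in H_s$ in \ref{itm:idem-L}, your first alternative is circular (it presupposes \ref{eq:identity-L}) while the direct computation of $\varepsilon_s(\eta_j)$ requires knowing $\Delta(1_H)$.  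The clean route is simply the unit axiom \eqref{def:Halg-L-unit} in the form $\lambda(1_{\kk Q})=(\varepsilon_s\otimes\id)\lambda(1_{\kk Q})$, which together with linear independence of the $e_j$ gives $\eta_j=\varepsilon_s(\eta_j)\in H_s$ immediately.

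The genuine gap is in your plan for \ref{eq:identity-L} and \ref{eq:psiek=theta-L}.  Applying $(\id\otimes\varepsilon)$ to $\Delta(\psi(e_k))=\sum_i y_{k,i}\otimes\psi(e_i)$ yields only
\[
\psi(e_k)=\textstyle\sum_{i\in Q_0}\varepsilon(\psi(e_i))\,y_{k,i},
\]
and this equals $\theta_k$ only once you know $\varepsilon(\psi(e_i))=1$ for every $i$.  That equality is \emph{not} automatic: $\psi$ is an algebra and comodule isomorphism but not a coalgebra map, and in a weak bialgebra $\varepsilon$ is not multiplicative, so $\varepsilon(\psi(e_i))$ cannot be read off from $\varepsilon(e_i)$.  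Your fallback that ``$\lambda(1_{\kk Q})$ must act as identity'' fails for the same reason the weak setting is weak: axiom \eqref{def:Halg-L-unit} says only $\lambda(1_A)\in H_s\otimes A$, not $\lambda(1_A)=1_H\otimes 1_A$, so $\sum_j\eta_j$ is a priori just some element of $H_s$, not $1_H$.

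What the paper does to close this: from the comodule identity on $1_{\kk Q_0}$ one gets $1_1\otimes 1_2=\sum_j\eta_j\otimes\psi(e_j)$ (this already yields \ref{eq:eta-basis} via Proposition~\ref{prop:Hs-facts}\ref{itm:coideal}); combining with Proposition~\ref{prop:Hs-facts}\ref{itm:deltaHsht} gives $y_{k,j}=\eta_j\psi(e_k)$; a short manipulation then shows $y_{k,j}=y_{k,j}\psi(e_k)$.  Feeding this into the \emph{weak} counit axiom $\varepsilon(abc)=\varepsilon(ab_1)\varepsilon(b_2c)$ with $a=y_{k,k}$, $b=1_H$, $c=\psi(e_k)$, and using that $\psi$ is an algebra map so $\psi(e_i)\psi(e_k)=\delta_{i,k}\psi(e_k)$, one extracts $1=\varepsilon(y_{k,k})=\varepsilon(\psi(e_k))$.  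This is the one step where Definition~\ref{def:wba}\ref{def:wba4} is used essentially, and it is missing from your sketch.  Once $\varepsilon(\psi(e_k))=1$ is in hand, \ref{eq:identity-L} and \ref{eq:psiek=theta-L} follow, and the rest of your outline goes through as you describe.
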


\begin{proof} 
\ref{itm:nonzero}
To show that $\eta_j$ is non-zero, we note that
\begin{equation*} 
\label{eq:epeta-L}
\textstyle \ep(\eta_j)=\ep\l(\sum_{i \in Q_0} y_{i,j}\r)\overset{\eqref{eq:coalgreln1}}{=}1.
\end{equation*}
A similar calculation shows that $\ep(\theta_j)=1$, so $\theta_j$ is non-zero.

\medskip

\ref{itm:idem-L} Let $j \in Q_0$. Then
\[\eta_j^2 = 
\textstyle \sum_{i,k \in Q_0} y_{i,j}y_{k,j} \overset{\eqref{eq:algreln2}}{=}  \sum_{i \in Q_0} y_{i,j} = \eta_j\]
so $\eta_j$ is idempotent. Moreover, note that 
\[
\begin{array}{rl}
\medskip
\textstyle \sum_{i,j \in Q_0} y_{i,j} \otimes e_j
&= \lambda(\sum_{i \in Q_0} e_i)
\; \; = \lambda(1_{\kk Q})
\; \;\overset{\eqref{def:Halg-L-unit}}{=} (\varepsilon_s \otimes \id_{\kk Q})\lambda(1_{\kk Q})\\
& = (\varepsilon_s \otimes \id_{\kk Q})\lambda(\sum_{i \in Q_0} e_i)
\; \; = \sum_{j \in Q_0} \varepsilon_s(\sum_{i \in Q_0} y_{i,j}) \otimes e_j.
\end{array}
\]
Since the $e_j$ are linearly independent, for each $j \in Q_0$ we have $\eta_j:=\textstyle \sum_{i \in Q_0} y_{i,j} \in H_s$. 

\medskip

\ref{itm:psi-L}  Suppose that $\kk Q_0 \cong H_t$ as left $H$-comodule algebras. Then there exists an algebra isomorphism $\psi: \kk Q_0 \to H_t$ which is also a map of left $H$-comodules. Hence, 
\begin{equation}
\label{eq:comod-diagram-L}
(\id_H \otimes \psi)\lambda_0 = \lambda_{H_t} \psi,
\end{equation}
for $\lambda_{H_t} = \Delta_H|_{H_t}$ by Example~\ref{ex:left}.

\smallskip

\ref{eq:eta-basis} Evaluating the left-hand side on $1_{\kk Q_0}$, we have
\begin{align*}(\id \otimes \psi) \lambda_0( 1_{\kk Q}) &= \textstyle (\id \otimes \psi) \lambda_0(\sum_{i\in Q_0} e_i) = (\id \otimes \psi)(\sum_{i,j \in Q_0} y_{i,j} \otimes e_j) \\&= \textstyle \sum_{j\in Q_0} \left(\sum_{i\in Q_0} y_{i,j}\right) \otimes \psi(e_j)  = \sum_{j\in Q_0} \eta_j \otimes \psi(e_j).
\end{align*}
On the other hand, $\psi$ is an algebra map, so $\psi(1_{\kk Q}) = 1_H$. Thus, $\Delta \psi(1_{\kk Q}) = 1_1 \otimes 1_2$. Hence,
\begin{equation} \label{eq:L1}
    \textstyle 1_1 \otimes 1_2 = \sum_{j \in Q_0} \eta_j \otimes \psi(e_j).
\end{equation}
Since the distinct $e_j$ are linearly independent and $\psi$ is an algebra isomorphism, the $\psi(e_j)$ are also linearly independent. Now by Proposition~\ref{prop:Hs-facts}\ref{itm:coideal}, we conclude that the $\{\eta_j\}_{j \in Q_0}$ span $H_s$. By \cref{prop:Hs-facts}\ref{itm:anti-iso}, we have $\dim_{\kk}H_t=\dim_\kk H_s$. Therefore $\dim_\kk H_s = \dim_\kk \kk Q_0 = |Q_0|$, so we have that $\{\eta_j\}_{j \in Q_0}$ is a $\kk$-basis of $H_s$.

\smallskip

\ref{eq:identity-L} For any $k \in Q_0$ we have
\[
\begin{array}{rll}
\textstyle \sum_{j\in Q_0} \eta_j\psi(e_k)\otimes \psi(e_j) &\overset{\eqref{eq:L1}}{=} 1_1 \psi(e_k) \otimes  1_2 &\overset{\textnormal{\ref{prop:Hs-facts}}\textnormal{\ref{itm:deltaHsht}}}{=} \Delta \psi(e_k) \\
&\overset{\eqref{eq:comod-diagram-L}}{=} (\id_H \otimes \psi)\lambda_0(e_k) &= \sum_{j\in Q_0} y_{k,j}\otimes \psi(e_j).
\end{array}
\]
Since the $\psi(e_j)$ are linearly independent, we have that
\begin{equation} 
\label{eq:L2}
\eta_j\psi(e_k) =y_{k,j}
\end{equation}
for each $j,k \in Q_0$. 

 By \eqref{eq:L1},
\begin{equation}\label{eq:L3}
1_H = 1_1 \ep(1_2) = \textstyle \sum_{j \in Q_0} \eta_j \ep(\psi(e_j)).
\end{equation}

Next, consider the following calculation:
\begin{align*}
    \textstyle \sum_{k\in Q_0} \eta_k \otimes \psi(e_k) &\overset{\eqref{eq:L1}}{=}1_1 \otimes 1_2 \\
    &=\Delta(1_H) \\
    &\overset{\eqref{eq:L3}}{=}\Delta(\textstyle \sum_{j\in Q_0} \ep(\psi(e_j)) \eta_j) \\
    &=\Delta(\textstyle \sum_{i,j \in Q_0} \ep(\psi(e_j)) y_{i,j}) \\
    &\overset{\eqref{eq:coalgreln1}}{=} \textstyle \sum_{i,j,k \in Q_0} \ep(\psi(e_j)) y_{i,k}\otimes y_{k,j} \\
    & = \textstyle \sum_{k \in Q_0} (\sum_{i \in Q_0} y_{i,k}) \otimes  (\sum_{j\in Q_0}\ep(\psi(e_j)) y_{k,j})\\
    & = \textstyle \sum_{k \in Q_0} \eta_k \otimes  (\sum_{j\in Q_0}\ep(\psi(e_j)) y_{k,j}).
\end{align*}
Since, by \ref{eq:eta-basis}, the $\{\eta_j\}_{j\in Q_0}$ are linearly independent, we must have \begin{equation} \label{eq:L4}
    \psi(e_k)=\textstyle \sum_{j\in Q_0}\ep(\psi(e_j)) y_{k,j}
\end{equation} 
for each $k\in Q_0.$ Notice that
\begin{equation*}
  \textstyle  y_{k,j} \overset{\eqref{eq:L2}}{=} \eta_j \psi(e_k) 
    \overset{\eqref{eq:L4}}{=} \eta_j  \sum_{{\ell}\in Q_0} \ep(\psi(e_{\ell})) y_{k,{\ell}}
    = \sum_{i,{\ell} \in Q_0} \ep(\psi(e_{\ell})) y_{i,j}y_{k,{\ell}}.
\end{equation*}
Multiplying both sides of the equation on the left by $y_{k,j}$ yields
\begin{equation} \label{eq:L5}
\begin{array}{rl}
\smallskip
    y_{k,j} &\overset{\eqref{eq:algreln2}}{=} (y_{k,j})^2 \\
    \smallskip
    &=\sum_{i,{\ell} \in Q_0} \ep(\psi(e_{\ell})) y_{k,j}y_{i,j}y_{k,{\ell}}\\
    \smallskip
    &\overset{\eqref{eq:algreln2}}{=} \sum_{i,{\ell} \in Q_0} \ep(\psi(e_{\ell})) \delta_{i,k}y_{k,j}y_{k,{\ell}}\\
    \smallskip
    &\overset{}{=} y_{k,j}\sum_{{\ell} \in Q_0} \ep(\psi(e_{\ell})) y_{k,{\ell}}\\
    \smallskip
    &\overset{\eqref{eq:L4}}{=} y_{k,j}\psi(e_k).
\end{array}
\end{equation}

Now for each $k\in Q_0$, we get 
\begin{align*}
    1_\kk&\overset{\eqref{eq:coalgreln1}}{=}\ep(y_{k,k}) \\
    &\overset{\eqref{eq:L5}}{=} \ep(y_{k,k}\psi(e_k)) \\
    &\overset{\textnormal{\ref{def:wba}}\textnormal{\ref{def:wba4}}}{=}\ep(y_{k,k}1_1)\ep(1_2\psi(e_k)) \\
    &\textstyle \overset{\eqref{eq:L1}}{=}\sum_{i\in Q_0} \ep(y_{k,k}\eta_i)\ep(\psi(e_i)\psi(e_k)) \\
    &\textstyle \overset{\psi\text{ alg. map}}{=} \sum_{i\in Q_0}\ep(y_{k,k}\eta_i)\ep(\delta_{i,k}\psi(e_k))\\
    &\overset{}{=} \ep(y_{k,k}\eta_k)\ep(\psi(e_k))\\
   &\overset{}{=} \textstyle \ep\l(\sum_{i\in Q_0} y_{k,k}y_{i,k}\r)\ep(\psi(e_k)) \\
    &\textstyle \overset{\eqref{eq:algreln2}}{=} \ep\l(\sum_{i\in Q_0} \delta_{i,k}y_{k,k}\r)\ep(\psi(e_k)) \\
    &\overset{\eqref{eq:coalgreln1}}{=}\ep(\psi(e_k)).
\end{align*}
Finally,
\[
\textstyle
1_H \overset{\eqref{eq:L3}}{=}\sum_{j \in Q_0} \ep(\psi(e_j))\eta_j  =\sum_{j\in Q_0}\eta_j = \sum_{i,j\in Q_0}y_{i,j}.
\]

\medskip

\ref{eq:psiek=theta-L} For each $k \in Q_0$, we have 
\begin{equation*} \label{eq:L6}
\textstyle \theta_k = \sum_{j \in Q_0} y_{k,j} \overset{\eqref{eq:L2}}{=} 
\sum_{j \in Q_0} \eta_j \psi(e_k)
 = (\sum_{i,j \in Q_0} y_{i,j}) \psi(e_k)
 \overset{\textnormal{\ref{eq:identity-L}}}{=}\psi(e_k).
\end{equation*}

\medskip

\ref{eq:thetaorthidem-L} Since $\{e_i\}_{i \in Q_0}$ is a $\kk$-basis of $\kk Q_0$ of orthogonal idempotent elements and $\psi$ is an algebra isomorphism, $\{\psi(e_i)\}_{i \in Q_0}$ is a $\kk$-basis of orthogonal idempotents of $H_t$. By, part \ref{eq:psiek=theta-L}, the claim follows. 

\medskip

\ref{eq:etaorthidem-L} Since $\kk Q_0 \cong_{\psi} H_t$ as algebras, by Proposition~\ref{prop:Hs-facts}\ref{itm:anti-iso} we have $H_s\cong_{\gamma}\bigoplus_{i\in Q_0}\kk$. Let $\{E_i\}_{i\in Q_0}$ be a set of primitive idempotents of $\bigoplus_{i\in Q_0}\kk$. 
By part \ref{itm:idem-L}, for each $k \in Q_0$, $\eta_k$ is an idempotent of $H_s$.
Hence, $\gamma(\eta_k)=\sum_{i\in I_k}E_i$ for some subset $I_k$ of $Q_0$. Therefore, 
\[\textstyle \sum_{k\in Q_0}\sum_{i\in I_k}E_i=\sum_{k\in Q_0}\gamma(\eta_k)=\gamma(\sum_{i,k\in Q_0}y_{i,k}) \overset{\textnormal{\ref{eq:identity-L}}}{=} \gamma(1_H)=1_{\bigoplus \kk}=\sum_{i\in Q_0}E_i.\]
As a result, we conclude that $\gamma(\eta_k)=E_i$ for some $i\in Q_0$ and for $k \neq j$, we have that $\gamma(\eta_k)\ne \gamma(\eta_j)$.
Thus, $\{\eta_k\}_{k\in Q_0}$ is a set of orthogonal idempotents of $H_s$.

\medskip

\ref{eq:crossitem-L} 
For each $i,j,k,{\ell} \in Q_0$, we have:
$$
    y_{i,j}y_{k,{\ell}}
    \overset{\eqref{eq:L2}}{=}\eta_j\psi(e_i)\eta_{\ell}\psi(e_k)
    \overset{\textnormal{\ref{eq:psiek=theta-L}}}{=}
    \eta_j \theta_i \eta_{\ell} \theta_k
    =
    \theta_i  \theta_k \eta_j \eta_{\ell}
    \overset{\textnormal{\ref{eq:thetaorthidem-L}}, \textnormal{\ref{eq:etaorthidem-L}}}{=}\delta_{i,k} \delta_{j,{\ell}} \theta_i \eta_j
    \overset{\textnormal{\ref{eq:psiek=theta-L}},\eqref{eq:L2}}{=}\delta_{i,k}\delta_{j,{\ell}}y_{i,j}, 
$$
where the third equality holds by parts \ref{itm:idem-L}, \ref{eq:psiek=theta-L}, and Proposition~\ref{prop:Hs-facts}\ref{itm:HsHtcommute}.
\end{proof}

\medskip

The analogue of Proposition~\ref{prop:wba-idempotent-L} for a weak bialgebra coaction on $\kk Q$ satisfying \cref{hyp:coactions}\ref{hyp:coactions-R} also holds, and follows by a similar proof.

\begin{proposition} \label{prop:wba-idempotent-R}
Let $H$ be a weak bialgebra which coacts on $\kk Q$ as in \cref{hyp:coactions}\ref{hyp:coactions-R}. For each $j \in Q_0$, consider the elements
$$
\eta_j:=\textstyle \sum_{i \in Q_0} y_{i,j}, \qquad \theta_j:=\textstyle \sum_{i \in Q_0} y_{j,i}.
$$
The following statements hold. 
\begin{enumerate}[(a),font=\upshape]
\item  \label{itm:nonzero-R}
For each $j \in Q_0$, $\eta_j$ and $\theta_j$ are non-zero elements of $H$.
\item \label{itm:idem-R} For each $j \in Q_0$, $\theta_j$ is an idempotent element of $H_t$.
\item \label{itm:phi-R}
If $\kk Q_0\cong_{\phi} H_s$ as right $H$-comodule algebras, then the following statements hold: 
    \begin{enumerate}[(i), font=\upshape]
        \item \label{eq:theta-basis} $\{\theta_k\}_{k \in Q_0}$ is a $\kk$-basis of $H_t$.
        \item \label{eq:identity-R} $1_H=\sum_{i,j\in Q_0} y_{i,j}.$
        \item \label{eq:phiek=theta-R} For each $k \in Q_0$, $\phi(e_k)=\eta_k$; hence $\eta_k \in H_s$.
        \item
        \label{eq:etaorthidem-R} The set  $\{\eta_j\}_{j\in Q_0}$ is a $\kk$-basis for $H_s$ of orthogonal idempotent elements.
        \item \label{eq:thetaorthidem-R} The set $\{\theta_i\}_{i\in Q_0}$ is a $\kk$-basis for $H_t$ of orthogonal idempotent elements.
        \item \label{eq:crossitem-R} For all $i,j,k,{\ell} \in Q_0$, $y_{i,j}y_{k,{\ell}}=\delta_{i,k}\delta_{j,{\ell}}y_{i,j}$. \qed
    \end{enumerate}
\end{enumerate} 
\end{proposition}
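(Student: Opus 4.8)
The plan is to mirror the proof of Proposition~\ref{prop:wba-idempotent-L} line by line, systematically swapping source and target: $H_s \leftrightarrow H_t$, $\eta_j \leftrightarrow \theta_j$, the left coaction $\lambda$ and isomorphism $\psi: \kk Q_0 \to H_t$ with the right coaction $\rho$ and isomorphism $\phi: \kk Q_0 \to H_s$, the relation \eqref{eq:algreln2} with \eqref{eq:algreln1}, left multiplications with right multiplications, and the unit-compatibility condition \eqref{def:Halg-L-unit} with its right-handed counterpart $\rho_A(1_A) \in A \otimes H_t$. Part~\ref{itm:nonzero-R} is identical to part~\ref{itm:nonzero}, since $\ep(\eta_j) = \ep(\theta_j) = 1$ by \eqref{eq:coalgreln1}. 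For part~\ref{itm:idem-R}, one checks $\theta_j^2 = \sum_{i,k \in Q_0} y_{j,i} y_{j,k} \overset{\eqref{eq:algreln1}}{=} \theta_j$; and since $\rho(1_{\kk Q}) = \sum_{i \in Q_0} e_i \otimes \theta_i$, the right-handed unit condition together with linear independence of $\{e_i\}_{i \in Q_0}$ forces $\theta_i \in H_t$ for all $i$.

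For part~\ref{itm:phi-R}, the hypothesis $\kk Q_0 \cong_\phi H_s$ in $\AH$ supplies an algebra isomorphism $\phi$ that is also a right $H$-comodule map, i.e.\ $(\phi \otimes \id_H)\rho_0 = \rho_{H_s}\phi$ with $\rho_{H_s} = \Delta_H|_{H_s}$ by \cref{ex:right}. Evaluating both sides at $1_{\kk Q_0}$ gives $1_1 \otimes 1_2 = \sum_{i \in Q_0} \phi(e_i) \otimes \theta_i$, and then, comparing $\Delta(\phi(e_k)) = 1_1 \otimes 1_2 \phi(e_k)$ (from \cref{prop:Hs-facts}\ref{itm:deltaHsht}) with $\Delta\phi(e_k) = (\phi \otimes \id_H)\rho_0(e_k) = \sum_i \phi(e_i) \otimes y_{i,k}$, we obtain $y_{i,k} = \theta_i\phi(e_k)$ --- the analogue of \eqref{eq:L2}. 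With these two identities available, item~\ref{eq:theta-basis} follows from \cref{prop:Hs-facts}\ref{itm:coideal} (which expresses $H_t$ as $\{(\varphi \otimes \id)\Delta(1) : \varphi \in H^*\}$), linear independence of $\{\phi(e_i)\}$, and the count $\dim_\kk H_t = \dim_\kk H_s = |Q_0|$; once item~\ref{eq:identity-R} is established, item~\ref{eq:phiek=theta-R} is immediate via $\eta_k = \sum_i y_{i,k} = \sum_i \theta_i\phi(e_k) = \left(\sum_{i,j} y_{i,j}\right)\phi(e_k) = 1_H\phi(e_k) = \phi(e_k)$ (so $\eta_k \in H_s$); item~\ref{eq:etaorthidem-R} then follows because $\phi$ carries the orthogonal-idempotent $\kk$-basis $\{e_i\}$ of $\kk Q_0$ onto $\{\eta_i\}$; item~\ref{eq:thetaorthidem-R} follows by the argument of \ref{eq:etaorthidem-L}, using \cref{prop:Hs-facts}\ref{itm:anti-iso} to write $H_t \cong \bigoplus_{i \in Q_0}\kk$, the idempotency of the $\theta_j$ from \ref{itm:idem-R}, and $\sum_j \theta_j = 1_H$ from \ref{eq:identity-R}; and item~\ref{eq:crossitem-R} follows from $y_{i,j}y_{k,\ell} = \theta_i\phi(e_j)\theta_k\phi(e_\ell) = \theta_i\eta_j\theta_k\eta_\ell = \theta_i\theta_k\eta_j\eta_\ell = \delta_{i,k}\delta_{j,\ell}\theta_i\eta_j = \delta_{i,k}\delta_{j,\ell}y_{i,j}$, where the third equality uses \cref{prop:Hs-facts}\ref{itm:HsHtcommute} and the fourth uses items~\ref{eq:etaorthidem-R} and~\ref{eq:thetaorthidem-R}.

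The one step that requires real work, and the main obstacle, is item~\ref{eq:identity-R}, namely the right-handed analogue of \eqref{eq:L3}--\eqref{eq:L5} and the long counit computation following it, whose purpose is to prove $\ep(\phi(e_k)) = 1_\kk$ for every $k \in Q_0$; granting this, $1_H = \ep(1_1)1_2 = \sum_i \ep(\phi(e_i))\theta_i = \sum_i \theta_i = \sum_{i,j} y_{i,j}$. I expect that chain of equalities to transcribe the left-handed one essentially verbatim: replace \eqref{eq:L2} by $y_{i,k} = \theta_i\phi(e_k)$, \eqref{eq:algreln2} by \eqref{eq:algreln1}, every left multiplication by the corresponding right multiplication, and $\psi$ by $\phi$, invoking the weak multiplicativity of the counit (Definition~\ref{def:wba}\ref{def:wba4}) at the analogous point. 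The only genuine care needed is bookkeeping: keeping the index positions on each $y_{i,j}$ and the left/right placement of the idempotents $\eta_j, \theta_j$ consistent under the source--target swap.
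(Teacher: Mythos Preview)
Your proposal is correct and matches the paper's approach exactly: the paper simply states that the result ``follows by a similar proof'' to Proposition~\ref{prop:wba-idempotent-L}, and you have faithfully spelled out that mirror argument, including the key bookkeeping (swapping $H_s \leftrightarrow H_t$, $\eta_j \leftrightarrow \theta_j$, \eqref{eq:algreln2} $\leftrightarrow$ \eqref{eq:algreln1}, and left $\leftrightarrow$ right multiplications). One small point worth making explicit in executing your sketch of item~\ref{eq:identity-R}: to obtain the analogue of \eqref{eq:L5} (namely $y_{i,k} = \phi(e_k)y_{i,k}$), it is cleanest to first rewrite the analogue of \eqref{eq:L2} as $y_{i,k} = \phi(e_k)\theta_i$ using Proposition~\ref{prop:Hs-facts}\ref{itm:HsHtcommute} before multiplying on the right by $y_{i,k}$, so that \eqref{eq:algreln1} applies directly to the adjacent factor $y_{i,j}y_{i,k}$.
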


This brings us to the main result of the paper.

\begin{theorem} \label{thm:main-wba}
Let $Q$ be a finite quiver with path algebra $\kk Q$. Then the universal quantum linear semigroupoids $\Oleft(\kk Q)$, $\Oright(\kk Q)$, and $\Otrans(\kk Q)$ exist, and they are each isomorphic to $\mathfrak{H}(Q)$ as weak bialgebras.
\end{theorem}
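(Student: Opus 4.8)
The plan is to prove the theorem for the left UQSGd $\Oleft(\kk Q)$ in detail; the case of $\Oright(\kk Q)$ follows by the same argument applied to $\Oright$-comodule algebras (using Proposition~\ref{prop:wba-idempotent-R} in place of Proposition~\ref{prop:wba-idempotent-L}), and the transposed case $\Otrans(\kk Q)$ then follows from Proposition~\ref{prop:left=right} once we verify that the isomorphisms $\Oleft(\kk Q) \cong \hay \cong \Oright(\kk Q)$ identify the left and right coactions of $\hay$ on $\kk Q$ from Example~\ref{ex:haycoaction}, which are manifestly transpose to each other via the basis $\{e_i\} \cup \{p\}$. So the heart of the matter is the left case.

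First I would exhibit $\hay$ as a candidate: by Example~\ref{ex:haycoaction}, $\kk Q \in {}^{\hay}\hspace{-.04in}\mathcal{A}$ via $\lambda(e_j) = \sum_i x_{j,i} \otimes e_i$, $\lambda(q) = \sum_p x_{q,p} \otimes p$, and by Proposition~\ref{prop:epHQ}\ref{prop:epHQs} we have $\hay_t = \bigoplus_{j} \kk a'_j$ with $a'_j = \sum_i x_{j,i}$; one checks that the map $\kk Q_0 \to \hay_t$, $e_j \mapsto a'_j = \theta_j$, is an isomorphism of $\kk$-algebras and of left $\hay$-comodules (the comodule compatibility is exactly $\Delta(a'_j) = a'_j \otimes a'_j$ versus $\lambda_0(e_j) = \sum_i x_{j,i}\otimes e_i$ composed with $\psi$, which matches by Proposition~\ref{prop:wba-idempotent-L}\ref{eq:psiek=theta-L}). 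Hence $\hay$ is a legitimate competitor in the universal problem of Definition~\ref{def:leftright-uqsgd}\ref{itm:left-uqsgd}.

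Next, given any weak bialgebra $H$ that left coacts on $\kk Q$ with $\kk Q_0 \cong H_t$ in $\HA$, I would first argue that the coaction necessarily has the form of Hypothesis~\ref{hyp:coactions}\ref{hyp:coactions-L}: linearity forces $\lambda(A_0) \subseteq H \otimes A_0$ and $\lambda(A_1) \subseteq H \otimes A_1$, so writing the coaction in the bases $\{e_i\}$ and $\{q\}$ produces elements $y_{j,i} \in H$ and $y_{q,p} \in H$ — and these lie in $H_0, H_1$ respectively because $H$ is generated, as a coalgebra, by the matrix coefficients. Then Lemma~\ref{lem:wba-structure}\ref{lem:wba-structure-a} and Proposition~\ref{prop:wba-idempotent-L} give all of the relations~\eqref{eq:coalgreln1}–\eqref{eq:algreln4} together with $1_H = \sum_{i,j} y_{i,j}$ and $y_{i,j}y_{k,\ell} = \delta_{i,k}\delta_{j,\ell} y_{i,j}$. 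Comparing with the presentation of $\hay$ in Example~\ref{ex:hay} (relations~\eqref{eq:multHQ1}–\eqref{eq:unitHQ}) and the coalgebra structure~\eqref{eq:coalgHQ}, I conclude there is a well-defined weak bialgebra map $\pi: \hay \to H$ sending $x_{j,i} \mapsto y_{j,i}$ and $x_{q,p}\mapsto y_{q,p}$, and by construction $(\pi \otimes \id)\lambda^{\hay} = \lambda^H$. Uniqueness of $\pi$ is forced because $\hay$ is generated as an algebra by the $x$'s (indeed by the $x_{q,p}$ in positive degree together with the idempotents) and $\pi$ is determined on generators by the equation $(\pi\otimes\id)\lambda^{\hay}=\lambda^H$ via linear independence of $\{e_i\}$ and $\{p\}$ in $\kk Q$. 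This establishes the universal property, so $\Oleft(\kk Q)$ exists and equals $\hay$.

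The main obstacle I anticipate is the claim that the matrix coefficients $y_{j,i}, y_{q,p}$ of an arbitrary coaction really generate $H$ as an algebra — i.e. inner-faithfulness is automatic here — and, relatedly, that $y_{j,i}\in H_0$ and $y_{q,p}\in H_1$ in whatever grading $H$ carries; but in fact the UQSGd is defined up to isomorphism and its universal property does not require $H$ to be generated by these coefficients, only that $\pi$ exists and is unique. Re-examining, the genuinely delicate point is verifying that the hypothesis $\kk Q_0 \cong H_t$ \emph{in} $\HA$ (not merely as algebras) is exactly what is needed to run Proposition~\ref{prop:wba-idempotent-L}\ref{itm:psi-L} and extract $1_H = \sum y_{i,j}$ and the orthogonality $y_{i,j}y_{k,\ell}=\delta_{i,k}\delta_{j,\ell}y_{i,j}$ — without the comodule-map condition on $\psi$ these fail, as Claim~3 of Example~\ref{ex1} shows. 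Granting Proposition~\ref{prop:wba-idempotent-L}, the remaining verification that $\pi$ respects $\Delta$, $\varepsilon$, $m$, $u$ is a routine check against~\eqref{eq:coalgHQ}, \eqref{eq:multHQ1}–\eqref{eq:unitHQ}, and~\eqref{eq:coalgreln1}–\eqref{eq:algreln4}, and that $\pi$ is injective (hence an isomorphism) follows since the relations~\eqref{eq:multHQ1}–\eqref{eq:multHQ0} are a \emph{complete} set of relations for $\hay$ and Lemma~\ref{lem:wba-structure} shows every such relation already holds in $H$ for the $y$'s, so $\pi$ cannot collapse anything beyond what the presentation dictates — more precisely, one takes $H = \hay$ itself with $\pi$ the identity to see $\hay$ satisfies its own universal property, and uniqueness of the UQSGd (Remark~\ref{rem:uniqueness}) gives $\Oleft(\kk Q)\cong\hay$.
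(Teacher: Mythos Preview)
Your approach is essentially the same as the paper's: exhibit $\hay$ as a candidate, build $\pi:\hay\to H$ on generators using Lemma~\ref{lem:wba-structure} and Proposition~\ref{prop:wba-idempotent-L}, verify $\pi$ is a weak bialgebra map satisfying $(\pi\otimes\id)\lambda^{\hay}=\lambda^H$, check uniqueness, and then handle the right and transposed cases via Proposition~\ref{prop:wba-idempotent-R} and Proposition~\ref{prop:left=right}.

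Two caveats. First, your remark that ``$\pi$ is injective (hence an isomorphism)'' is confused: for an arbitrary competitor $H$, the map $\pi:\hay\to H$ need not be injective at all---$H$ could be a proper quotient of $\hay$ satisfying additional relations. The universal property only demands existence and uniqueness of $\pi$, not injectivity; your subsequent ``more precisely'' clause is the correct conclusion and the injectivity sentence should simply be deleted. Second, the step you call a ``routine check'' for the counit is in fact the most delicate part of the paper's argument: since $\varepsilon_H$ is only \emph{weakly} multiplicative (Definition~\ref{def:wba}\ref{def:wba4}), knowing $\varepsilon_H(y_{p,q})=\delta_{p,q}$ does not immediately give $\varepsilon_H(y_{p_1,q_1}\cdots y_{p_\ell,q_\ell})=\varepsilon_{\hay}(x_{p_1,q_1}\cdots x_{p_\ell,q_\ell})$. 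The paper establishes this by an induction that inserts $1_H=\sum_{i,j}y_{i,j}$ (from Proposition~\ref{prop:wba-idempotent-L}\ref{itm:psi-L}\ref{eq:identity-L}), applies the weak multiplicativity axiom, and uses the orthogonality relations~\eqref{eq:mult-H}; you should not skip this.
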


\begin{proof}
Consider the left and right coaction of $\hay$ on $\kk Q$ presented in Example~\ref{ex:haycoaction}. We will show in full detail that $\Oleft(\kk Q) \cong \hay$ as weak bialgebras (under Hypothesis~\ref{hyp:coactions}\ref{hyp:coactions-L}), and briefly discuss the proof that $\Oright(\kk Q) \cong \hay$ as weak bialgebras  (under Hypothesis~\ref{hyp:coactions}\ref{hyp:coactions-R}). 
 Then we have that $\Oleft(\kk Q) \cong \hay \cong \Oright(\kk Q)$ as weak bialgebras, and that the coactions of the left/right UQSGds are transposed via Example~\ref{ex:haycoaction}. 
Hence, Proposition~\ref{prop:left=right} yields  $\Otrans(\kk Q) \cong \hay$ as weak bialgebras  (under Hypothesis~\ref{hyp:coactions}\ref{hyp:coactions-Manin}).

\smallskip

 To proceed, we will show that $\hay$ satisfies the universal property of $\Oleft(\kk Q)$. Indeed we have that $\hay$ is a weak bialgebra that left coacts on $\kk Q$ [Example~\ref{ex:haycoaction}]. Moreover,  $\kk Q_0 \cong (\hay)_t$ as left $\hay$-comodule algebras: the algebra isomorphism, call it $\tau$, holds by Proposition~\ref{prop:epHQ} via $e_i \mapsto a_i'$, which is a left comodule map due to the computation below:
\begin{align*}
\lambda_{\hay_t} \tau (e_i) 
&= \textstyle \lambda_{\hay_t} (a_i')\\
&\overset{\textnormal{\ref{prop:epHQ}},~\textnormal{\ref{ex:left}}}{=} \textstyle \sum_{k\in Q_0} \Delta_{\hay} (x_{i,k})\\
&\overset{\eqref{eq:coalgHQ}}{=} \textstyle \sum_{j,k\in Q_0} x_{i,j} \otimes x_{j,k}\\
&\overset{\textnormal{\ref{prop:epHQ}}}{=}  \textstyle \sum_{j \in Q_0} (\id_{\hay} \otimes \tau) (x_{i,j} \otimes e_j)\\  
&\overset{\textnormal{\ref{ex:haycoaction}}}{=} \textstyle (\id_{\hay} \otimes \tau) \lambda_{\kk Q_0}(e_i).
\end{align*}

 Now, assume that $H$ is a weak bialgebra which coacts from the left on $\kk Q$ as in Hypothesis~\ref{hyp:coactions}\ref{hyp:coactions-L},  recall Remark~\ref{rem:A0}, and 
assume that there exists an isomorphism
$$\psi: \kk Q_0 \overset{\sim}{\longrightarrow} H_t \quad \text{in } \HA.$$ 
 Recall that we have elements $\{y_{i,j}\}_{i,j \in Q_0}$ and $\{y_{p,q}\}_{p,q \in Q_1}$ in $H$, as well as idempotents $\{\eta_i\}_{i \in Q_0}$ in $H_s$ and $\{\theta_i\}_{i \in Q_0}$ in $H_t$, as in Lemma~\ref{lem:wba-structure} and \cref{prop:wba-idempotent-L}. 
Now consider the map $\pi$ defined on the algebra generators of $\hay$ and extended multiplicatively and linearly:
$$\pi: \hay \to H \quad \text{defined by}  \quad
x_{i,j} \mapsto y_{i,j} \text{ for } i,j \in Q_0, \quad
x_{p,q} \mapsto y_{p,q} \text{ for } p,q \in Q_1.
$$
We aim to show first that $\pi$ is a weak bialgebra map (i.e., that $\pi$ is an algebra map and a coalgebra map) satisfying $(\pi \otimes \id_{\kk Q})\lambda^{\hay} = \lambda^H$, and that $\pi$ is the only such weak bialgebra map $\hay \to H$ with this property. This would achieve the result that $\Oleft(\kk Q) \cong \hay$ as weak bialgebras.

To show that $(\pi \otimes \id_{\kk Q})\lambda^{\hay} = \lambda^H$, note that for $i\in Q_0$,
\[(\pi \otimes \id_{\kk Q}) \lambda^{\hay}(e_i)  = (\pi \otimes \id_{\kk Q})\left(\textstyle \sum_{j \in Q_0} x_{i,j} \otimes e_j\right)
= \textstyle \sum_{j \in Q_0}y_{i,j} \otimes e_j = \lambda^H(e_i).
\]
A similar calculation shows that $(\pi \otimes \id_{\kk Q})\lambda^{\hay}(p) = \lambda^H(p)$ for $p \in Q_1$. Since $\pi, \:\lambda^{\hay}$ and $\lambda^H$ are multiplicative, we must have $(\pi \otimes \id_{\kk Q})\lambda^{\hay} = \lambda^H$.

\smallskip

The unitality of $\pi$ follows from the computation:
$$\textstyle \pi(1_{\hay}) 
\overset{\eqref{eq:unitHQ}}{=} \pi(\sum_{i,j \in Q_0} x_{i,j}) 
=
\sum_{i,j \in Q_0} y_{i,j} 
\overset{\textnormal{\ref{prop:wba-idempotent-L}}\textnormal{\ref{itm:psi-L}\textnormal{\ref{eq:identity-L}}}}{=} 1_H.$$
To prove that $\pi$ is multiplicative, note that by Proposition~\ref{prop:wba-idempotent-L}\ref{itm:psi-L}\ref{eq:crossitem-L}, for all $i,j,k,l \in Q_0$,
$$y_{i,j} y_{k,l} = \delta_{i,k} \delta_{j,l} y_{i,j}.$$
By \eqref{eq:algreln3} and \eqref{eq:algreln4} in \cref{lem:wba-structure}, we have 
\[y_{s(p),s(q)}y_{p,q}=y_{p,q}=y_{p,q}y_{t(q),t(p)}.\]
So, we obtain that
\begin{align} \label{eq:mult-H}
\begin{array}{rl}
\smallskip
y_{p,q}y_{p',q'}
&\overset{\eqref{eq:algreln3},\eqref{eq:algreln4}}{=} y_{p,q} \; y_{t(p),t(q)} \;y_{s(p'),s(q')}\;  y_{p',q'}\\
\smallskip
&=  \delta_{t(p),s(p')} \; \delta_{t(q),s(q')}\; y_{p,q}\; y_{t(p),t(q)} \; y_{p',q'}\\
&\overset{\eqref{eq:algreln4}}{=}  \delta_{t(p),s(p')} \; \delta_{t(q),s(q')}\; y_{p,q} \; y_{p',q'}.
\end{array}
\end{align}
Now  \eqref{eq:multHQ1}, \eqref{eq:multHQ01} and \eqref{eq:multHQ0} imply that $\pi$ is multiplicative. Therefore, $\pi$ is an algebra map.
 
\smallskip

Next, we will show that 
$\pi$ is also a coalgebra map, i.e., that $\Delta_H\; \pi = (\pi \otimes \pi) \Delta_{\hay}$ and $\ep_H \; \pi = \ep_{\hay}$.
We will prove this for $x_{p,q}$ by induction on the length $\ell$ of the paths $p,q \in Q$.
If $\ell = 0, 1$, then the assertion holds by \eqref{eq:coalgreln1} and \eqref{eq:coalgreln2} in Lemma~\ref{lem:wba-structure}.
Now take 
\[ p=p_1 \cdots p_{\ell-1}p_\ell \quad \text{ and } \quad q=q_1 \cdots q_{\ell-1}q_\ell
\]
paths of length~$\ell$ with $p_i,q_i \in Q_1$. 
Then, for $\ell \geq 2$:
\begin{align*}
\medskip
&\Delta_H \; \pi(x_{p_1,q_1} \cdots x_{p_{\ell-1},q_{\ell-1}}x_{p_{\ell},q_{\ell}}) \\
&= \Delta_H(y_{p_1,q_1} \cdots y_{p_{\ell-1},q_{\ell-1}}y_{p_{\ell},q_{\ell}})\\
\medskip
&= \Delta_H(y_{p_1,q_1} \cdots y_{p_{\ell-1},q_{\ell-1}})\; \Delta_H(y_{p_{\ell}, q_{\ell}})\\
\medskip
&= (\Delta_H \; \pi)(x_{p_1,q_1} \cdots x_{p_{\ell-1},q_{\ell-1}})\; (\Delta_H \; \pi)(x_{p_{\ell}, q_{\ell}})\\
\medskip
& \overset{\text{induction}}{=}
(\pi \otimes \pi)\Delta_{\hay}(x_{p_1,q_1} \cdots x_{p_{\ell-1},q_{\ell-1}})\;
(\pi \otimes \pi) \Delta_{\hay}(x_{p_{\ell},q_{\ell}})\\
\medskip
& =(\pi \otimes \pi)  \Delta_{\hay}(x_{p_1,q_1} \cdots x_{p_{\ell-1},q_{\ell-1}}x_{p_{\ell},q_{\ell}}),
\end{align*}
where the first three equalities and the last equality hold because $\pi$, $\Delta_H$, and $\Delta_{\hay}$ preserve multiplication.
Further, we have
\begin{align*}
&(\ep_H \; \pi)(x_{p_1,q_1}\cdots x_{p_{\ell-1},q_{\ell-1}} x_{p_{\ell},q_{\ell}})\\ 
&= \ep_H(y_{p_1,q_1}\cdots y_{p_{\ell-1},q_{\ell-1}} y_{p_{\ell},q_{\ell}}) \\
&= \ep_H(y_{p_1,q_1}\cdots y_{p_{\ell-1},q_{\ell-1}}1_H y_{p_{\ell},q_{\ell}}) \\
&\overset{\textnormal{\ref{def:wba}}\textnormal{\ref{def:wba4}}}{=} \ep_H(y_{p_1,q_1}\cdots y_{p_{\ell-1},q_{\ell-1}}1_1) \; \ep_H(1_2 y_{p_{\ell},q_{\ell}})\\
&\overset{\textnormal{\ref{prop:wba-idempotent-L}}\textnormal{\ref{itm:psi-L}\textnormal{\ref{eq:identity-L}}},\eqref{eq:coalgreln1}}{=} \textstyle \sum_{i,j,k \in Q_0}\ep_H(y_{p_1,q_1}\cdots y_{p_{\ell-1},q_{\ell-1}}y_{i,k}) \; \ep_H(y_{k,j} y_{p_{\ell},q_{\ell}})\\
&\overset{\eqref{eq:mult-H}}{=} \textstyle \sum_{i,j,k \in Q_0}\ep_H(y_{p_1,q_1}\cdots y_{p_{\ell-1},q_{\ell-1}}\delta_{i,t(p_{\ell-1})}\delta_{k,t(q_{\ell-1})})\; \ep_H(\delta_{k,s(p_{\ell})}\delta_{j,s(q_{\ell})} y_{p_{\ell},q_{\ell}})\\
& = \textstyle \sum_{k \in Q_0}\delta_{k,t(q_{\ell-1})}\delta_{k,s(p_{\ell})}\ep_H(y_{p_1,q_1}\cdots y_{p_{\ell-1},q_{\ell-1}})\; \ep_H( y_{p_{\ell},q_{\ell}})\\
&=\delta_{t(q_{\ell-1}),s(p_{\ell})} \ep_H(y_{p_1,q_1}\cdots y_{p_{\ell-1},q_{\ell-1}})\ep_H( y_{p_{\ell},q_{\ell}})\\
&=\delta_{t(q_{\ell-1}),s(p_{\ell})} (\ep_H \; \pi)(x_{p_1,q_1}\cdots x_{p_{\ell-1},q_{\ell-1}})\; (\ep_H \;\pi)( x_{p_{\ell},q_{\ell}})\\
&\overset{\text{induction}}{=}\delta_{t(q_{\ell-1}),s(p_{\ell})} \ep_{\hay}(x_{p_1,q_1}\cdots x_{p_{\ell-1},q_{\ell-1}})\;  \ep_{\hay}( x_{p_{\ell},q_{\ell}})\\
&=\delta_{t(q_{\ell-1}),s(p_{\ell})} \ep_{\hay}(x_{p_1,q_1}\cdots x_{p_{\ell-1},q_{\ell-1}})\delta_{p_{\ell},q_{\ell}}\\
&=\delta_{t(q_{\ell-1}),s(q_{\ell})} \ep_{\hay}(x_{p_1,q_1}\cdots x_{p_{\ell-1},q_{\ell-1}})\delta_{p_{\ell},q_{\ell}} \\
&\overset{\textnormal{\ref{prop:epHQ}}\textnormal{\ref{eq:epHQ}}}{=}\delta_{t(q_{\ell-1}),s(q_{\ell})} \delta_{t(p_1),s(p_2)}\cdots\delta_{t(p_{\ell-2}),s(p_{\ell-1})}\delta_{t(q_1),s(q_2)}\cdots\delta_{t(q_{\ell-2}),s(q_{\ell-1})}\\
&\quad \quad \quad \quad \quad \cdot \delta_{p_1,q_1}\cdots \delta_{p_{\ell-1},q_{\ell-1}}\delta_{p_{\ell},q_{\ell}}\\
&=\delta_{t(p_{\ell-1}),s(p_{\ell})}\delta_{t(q_{\ell-1}),s(q_{\ell})} \delta_{t(p_1),s(p_2)}\cdots\delta_{t(p_{\ell-2}),s(p_{\ell-1})}\delta_{t(q_1),s(q_2)}\cdots\delta_{t(q_{\ell-2}),s(q_{\ell-1})}\\
&\quad \quad  \cdot \delta_{p_1,q_1}\cdots \delta_{p_{\ell-1},q_{\ell-1}}\delta_{p_{\ell},q_{\ell}}\\
&= \l(\delta_{t(p_1),s(p_2)}\cdots\delta_{t(p_{\ell-2}),s(p_{\ell-1})}\delta_{t(p_{\ell-1}),s(p_{\ell})}\r)\l(\delta_{t(q_1),s(q_2)}\cdots\delta_{t(q_{\ell-2}),s(q_{\ell-1})}\delta_{t(q_{\ell-1}),s(q_{\ell})}\r)\\
&\quad \quad \cdot \l(\delta_{p_1,q_1}\cdots \delta_{p_{\ell-1},q_{\ell-1}}\delta_{p_{\ell},q_{\ell}}\r)\\
&\overset{\textnormal{\ref{prop:epHQ}}\textnormal{\ref{eq:epHQ}}}{=}\ep_{\hay}(x_{p_1,q_1}\cdots x_{p_{\ell-1},q_{\ell-1}} x_{p_{\ell},q_{\ell}}),
\end{align*}
as desired. Therefore,
we have shown that $\pi$ is a map of weak bialgebras. 

\smallskip

It remains to show that $\pi$ is unique.
 Suppose that $\pi': \hay \to H$ is a weak bialgebra homomorphism such that $(\pi' \otimes \id_{\kk Q}) \lambda^{\hay} = \lambda^H$. Let $i \in Q_0$. Then
\[(\pi' \otimes \id_{\kk Q}) \lambda^{\hay}(e_i)  = (\pi' \otimes \id_{\kk Q})\left(\textstyle \sum_{j \in Q_0} x_{i,j} \otimes e_j\right)
= \textstyle \sum_{j \in Q_0} \pi'(x_{i,j}) \otimes e_j
\]
while
\[ \lambda^H(e_i) = \textstyle \sum_{j \in Q_0} y_{i,j} \otimes e_j.
\]
Since the $e_j$ are linearly independent in $\kk Q$, this implies that $\pi'(x_{i,j}) = y_{i,j}$ for each $i, j \in Q_0$.
By a similar argument, $\pi'(x_{p,q}) = y_{p,q}$ for any $p,q \in Q_1$. Hence, $\pi'$ and $\pi$ agree on a set of algebra generators for $\hay$, and since both $\pi'$ and $\pi$ are algebra homomorphisms, we have that $\pi' = \pi$.

\medskip

To show that $\hay$ satisfies the universal property of $\Oright(\kk Q)$, one only needs to make the following adjustments to the proof above: 
assume Hypothesis~\ref{hyp:coactions}\ref{hyp:coactions-R} in place of Hypothesis~\ref{hyp:coactions}\ref{hyp:coactions-L} (i.e., replace the left coaction $\lambda$ with the right coaction $\rho$); replace $\psi$ with an isomorphism $\phi: \kk Q_0 \overset{\sim}{\longrightarrow} H_s$ in $\AH$; and employ Proposition~\ref{prop:wba-idempotent-R} in place of Proposition~\ref{prop:wba-idempotent-L} in the argument that $\pi: \hay \to H$ is an algebra map. Then, the result for $\Oright(\kk Q)$ follows in a manner similar to that for $\Oleft(\kk Q)$ above.
\end{proof}

With Lemma~\ref{lem:O-in-faith}, the following is a consequence of the theorem above.

\begin{corollary}
The weak bialgebra $\hay$ coacts on $\kk Q$ inner-faithfully. \qed
\end{corollary}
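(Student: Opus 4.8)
The statement to prove is: \emph{The weak bialgebra $\hay$ coacts on $\kk Q$ inner-faithfully.}

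The plan is to combine Theorem~\ref{thm:main-wba} with Lemma~\ref{lem:O-in-faith}\ref{itm:O-i-f}. First I would recall that Theorem~\ref{thm:main-wba} establishes that $\Oleft(\kk Q)$ exists and is isomorphic to $\hay$ as weak bialgebras, via the left coaction $\lambda^{\hay}$ of Example~\ref{ex:haycoaction}; in particular, under this isomorphism the universal coaction $\lambda^{\Oleft(\kk Q)}$ is identified with $\lambda^{\hay}$. Next, Lemma~\ref{lem:O-in-faith}\ref{itm:O-i-f} asserts that the UQSGd $\Oleft(A)$ always left coacts on $A$ inner-faithfully, whenever it exists. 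Applying this with $A = \kk Q$ and transporting along the isomorphism $\Oleft(\kk Q) \cong \hay$ immediately gives that $\hay$ left coacts on $\kk Q$ inner-faithfully via $\lambda^{\hay}$.

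To be careful about transport of structure: if $f\colon \Oleft(\kk Q) \xrightarrow{\sim} \hay$ is the weak bialgebra isomorphism from Theorem~\ref{thm:main-wba}, and $K \subseteq \hay$ is a weak subbialgebra with $\lambda^{\hay}(\kk Q) \subseteq K \otimes \kk Q$, then $f^{-1}(K)$ is a weak subbialgebra of $\Oleft(\kk Q)$ with $\lambda^{\Oleft(\kk Q)}(\kk Q) \subseteq f^{-1}(K) \otimes \kk Q$, so inner-faithfulness of the universal coaction forces $f^{-1}(K) = \Oleft(\kk Q)$, whence $K = \hay$. There is no real obstacle here, as the bulk of the work is done in the preceding theorem and lemma; the only thing to check is this straightforward compatibility of the isomorphism with the comodule structures, which is exactly the content of the identification $\lambda^{\hay} = (f \otimes \id_{\kk Q})\lambda^{\Oleft(\kk Q)}$ recorded in the proof of Theorem~\ref{thm:main-wba}. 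Since the statement is flagged in the excerpt as following from Theorem~\ref{thm:main-wba} together with Lemma~\ref{lem:O-in-faith}, a one-line proof citing these two results suffices, and the QED symbol in the excerpt confirms that no further argument is expected.

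Alternatively, and if one wanted a self-contained argument avoiding the language of UQSGds: take any weak subbialgebra $K \subseteq \hay$ with $\lambda(\kk Q) \subseteq K \otimes \kk Q$ for the coaction $\lambda$ of Example~\ref{ex:haycoaction}. Then from $\lambda(e_i) = \sum_{j\in Q_0} x_{i,j}\otimes e_j$ and $\lambda(q) = \sum_{p\in Q_1} x_{q,p}\otimes p$, linear independence of $\{e_j\}_{j\in Q_0}$ and $\{p\}_{p\in Q_1}$ in $\kk Q$ forces $x_{i,j}\in K$ for all $i,j\in Q_0$ and $x_{q,p}\in K$ for all $p,q\in Q_1$. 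But these elements generate $\hay$ as an algebra (Example~\ref{ex:hay}), so $K = \hay$. Either route works; the first is shorter given what has been proved.

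\begin{proof}
By Theorem~\ref{thm:main-wba}, $\Oleft(\kk Q)$ exists and is isomorphic to $\hay$ as weak bialgebras, with the universal left coaction corresponding to the coaction $\lambda$ of $\hay$ on $\kk Q$ given in Example~\ref{ex:haycoaction}. By Lemma~\ref{lem:O-in-faith}\ref{itm:O-i-f}, $\Oleft(\kk Q)$ coacts on $\kk Q$ inner-faithfully. Transporting this along the isomorphism $\Oleft(\kk Q)\cong\hay$ shows that $\hay$ coacts on $\kk Q$ inner-faithfully, as claimed.
\end{proof}
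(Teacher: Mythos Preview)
Your proof is correct and follows exactly the approach the paper intends: the corollary is stated immediately after Theorem~\ref{thm:main-wba} with the remark that it follows from that theorem together with Lemma~\ref{lem:O-in-faith}, and the \qed symbol indicates no further argument is given. Your transport-of-structure check and alternative direct argument are both fine elaborations of this one-line deduction.
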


We end this section with an example of our result above in the bialgebra case, thus obtaining a left/right/transposed UQSG as in Definitions~\ref{def:UQSG-intro} and~\ref{def:ManinUQSG-intro}. 

\begin{example} \label{ex: Manincase}
Suppose that $Q$ is a finite quiver with $|Q_0| =1$ and $|Q_1| = n$ for some $n \in \mathbb{N}$, that is, $Q$ is the $n$-loop quiver. Here, $\kk Q$ is isomorphic to the free algebra $\kk \langle t_1, \dots, t_n \rangle$. Now Theorem~\ref{thm:main-wba} implies that, as bialgebras,
$$O^{\textnormal{left}}(\kk \langle t_1, \dots, t_n \rangle) \cong O^{\textnormal{right}}(\kk \langle t_1, \dots, t_n \rangle) \cong O^{\textnormal{trans}}(\kk \langle t_1, \dots, t_n \rangle) \cong  \mathfrak{H}( Q_{n{\text{-loop}}}),$$
 where $\mathfrak{H}( Q_{n{\text{-loop}}})$ is defined in Example~\ref{ex:hay}. Indeed, $\dim_{\kk} (\mathfrak{H}(Q))_s = |Q_0| = 1$ by Proposition~\ref{prop:epHQ}\ref{prop:epHQs}, so all of the structures above are bialgebras by Proposition~\ref{prop:Hs-facts}\ref{itm:bialgebra}.
Moreover, one can check that $\mathfrak{H}(Q_{n{\text{-loop}}})$ is isomorphic to the free algebra  $\kk \langle x_{t_i, t_j} \mid 1 \leq i,j \leq n \rangle$.
\end{example}


\section{Universal quantum linear semigroupoids of quotients of path algebras} \label{sec:quotients}

Let $Q$ be a finite quiver and let $I$ be a graded ideal of $\kk Q$.
In this section, we study the UQSGds of the quotient algebra $\kk Q/I$, showing that if they exist, they are each a quotient of $\hay$ [\cref{prop:quotients}]. Moreover, we generalize a result of Manin by showing that a UQSGd of a quadratic quotient algebra is isomorphic to the opposite UQSGd of its quadratic dual [Theorem~\ref{thm:quad}]. We also provide several examples. To start, we need a few well-known facts.

\begin{definition}
Let $(H,m,u,\Delta,\ep)$ be a weak bialgebra. A \emph{biideal} of $H$ is a $\kk$-subspace $I \subseteq H$ which is both an ideal and a coideal, that is:  $hI \subseteq I$ and $Ih \subseteq I$ for any $h \in H$; 
 $\Delta(I) \subseteq I \otimes H + H \otimes I$; and
$\ep(I)=0$.
\end{definition}

\begin{lemma}
\label{lem:wbakernel}
The kernel of a weak bialgebra map is a biideal. 
\end{lemma}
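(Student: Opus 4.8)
The plan is to verify the three defining conditions of a biideal directly from the fact that $f \colon H \to K$ is a weak bialgebra map, i.e., $f$ is simultaneously a $\kk$-algebra map and a $\kk$-coalgebra map. Let $I = \ker f$.

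First I would check that $I$ is a two-sided ideal of $H$. This is immediate: if $x \in I$ and $h \in H$, then $f(hx) = f(h)f(x) = f(h)\cdot 0 = 0$ and similarly $f(xh) = 0$, so $hx, xh \in I$. Next I would check that $\ep_H(I) = 0$. Since $f$ is a coalgebra map, $\ep_K \circ f = \ep_H$; hence for $x \in I$ we get $\ep_H(x) = \ep_K(f(x)) = \ep_K(0) = 0$.

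The main point — and the only step requiring a small argument — is the coideal condition $\Delta_H(I) \subseteq I \otimes H + H \otimes I$. Here I would use the standard linear-algebra fact that for a surjective-onto-its-image linear map $f$, one has $\ker(f \otimes f) = \ker f \otimes H + H \otimes \ker f = I \otimes H + H \otimes I$. Then, since $f$ is a coalgebra map, $(f \otimes f)\circ \Delta_H = \Delta_K \circ f$; so for $x \in I$, $(f\otimes f)(\Delta_H(x)) = \Delta_K(f(x)) = \Delta_K(0) = 0$, which places $\Delta_H(x)$ in $\ker(f\otimes f) = I \otimes H + H \otimes I$, as required. (One could alternatively pick a basis of $H$ extending a basis of $I$ and argue with coordinates, but the kernel-of-tensor-product identity is the cleanest route.)

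I do not expect any real obstacle here; the only subtlety worth a sentence is justifying $\ker(f\otimes f) = I \otimes H + H \otimes I$, which follows by choosing a complement $H = I \oplus C$ and noting $f|_C$ is injective, so $(f\otimes f)$ restricted to $C \otimes C$ is injective while it kills $I \otimes H + H \otimes I$; a dimension/decomposition count then gives equality. With all three conditions verified, $I = \ker f$ is a biideal of $H$, completing the proof.
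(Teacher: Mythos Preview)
Your proof is correct and follows essentially the same approach as the paper: both argue that the kernel of an algebra map is an ideal and the kernel of a coalgebra map is a coideal, hence the kernel of a weak bialgebra map is a biideal. The paper simply cites these two standard facts in one line, whereas you unpack the details (including the justification of $\ker(f\otimes f) = I \otimes H + H \otimes I$), but the underlying argument is identical.
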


\begin{proof}
Let $\alpha: H \to K$ be a weak bialgebra map. Since the kernel of an algebra map is an ideal and the kernel of a coalgebra map is a coideal, $\ker \alpha$ is a biideal.
\end{proof}

\begin{lemma}
\label{lem:biideal} 
Suppose that $H$ is a weak bialgebra and that $I$ is a biideal. Then $H/I$ can be given the structure of a weak bialgebra as follows, for all $h,k \in H$: $m_{H/ I}((h+I) \otimes (k+I))=hk+I$;  $1_{H/ I}=1_H + I$;  $\Delta_{H/I}(h+I)=(h_1 + I) \otimes (h_2+I)$; and $\ep_{H/I}(h+I):=\ep_H(h)$.
\end{lemma}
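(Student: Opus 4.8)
The plan is to verify that the claimed operations on $H/I$ are well-defined and that they satisfy the five axioms of Definition~\ref{def:wba}, each axiom being inherited from the corresponding axiom on $H$ modulo $I$. Since $I$ is a biideal, it is in particular an ideal, so $(H/I, m_{H/I}, u_{H/I})$ is the usual quotient $\kk$-algebra; and it is a coideal, so $(H/I, \Delta_{H/I}, \ep_{H/I})$ is the usual quotient $\kk$-coalgebra. This handles axioms (i) and (ii), and the only genuine content is to record why the quotient coalgebra structure is well-defined: $\Delta(I) \subseteq I \otimes H + H \otimes I$ ensures that $h \mapsto (h_1 + I)\otimes(h_2+I)$ factors through $H/I$ to land in $H/I \otimes H/I$, and $\ep(I) = 0$ ensures $\ep_{H/I}$ is well-defined.

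For the remaining axioms, I would argue as follows. Axiom (iii), multiplicativity of $\Delta_{H/I}$: for $h,k \in H$, using that $\Delta_H$ is multiplicative,
\[
\Delta_{H/I}\big((h+I)(k+I)\big) = \Delta_{H/I}(hk + I) = (h_1 k_1 + I)\otimes(h_2 k_2 + I) = \Delta_{H/I}(h+I)\,\Delta_{H/I}(k+I),
\]
where the last product is taken in $(H/I)\otimes(H/I)$. Axiom (iv), weak multiplicativity of the counit: for $h,k,l \in H$,
\[
\ep_{H/I}\big((h+I)(k+I)(l+I)\big) = \ep_H(hkl) = \ep_H(h k_1)\ep_H(k_2 l) = \ep_H(h k_2)\ep_H(k_1 l),
\]
and each factor equals the corresponding $\ep_{H/I}$ value, so the identity descends. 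Axiom (v), weak comultiplicativity of the unit: since $1_{H/I} = 1_H + I$ and $\Delta_{H/I}$ is an algebra map (by axiom (iii), just proved) applied to $\Delta_H(1_H)$, the identity $\Delta^2(1) = (\Delta(1)\otimes 1)(1\otimes\Delta(1)) = (1\otimes\Delta(1))(\Delta(1)\otimes 1)$ in $H$ maps under the (iterated) quotient map to the corresponding identity in $H/I$, because the quotient map $H \to H/I$ is an algebra map and commutes with $\Delta$ by construction.

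There is no real obstacle here; the lemma is routine, and the only place requiring care is the well-definedness of $\Delta_{H/I}$ and $\ep_{H/I}$, which is precisely what the definition of a biideal is designed to guarantee. One can phrase the whole proof compactly by observing that the quotient map $\pi \colon H \to H/I$ is simultaneously the canonical algebra surjection and the canonical coalgebra surjection, so every one of the axioms (i)--(v), being an identity among composites of $m$, $u$, $\Delta$, $\ep$ applied to elements (or to $1$), is preserved by applying $\pi$ (and $\pi\otimes\pi$, $\pi\otimes\pi\otimes\pi$) to the corresponding identity in $H$; surjectivity of $\pi$ then shows the identities hold on all of $H/I$. I would write the proof in this style, spelling out only axioms (iv) and (v) in the displays above since (i)--(iii) are immediate from the standard quotient algebra/coalgebra constructions together with the biideal hypothesis.

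\begin{proof}
Since $I$ is an ideal of $H$, the quadruple $(H/I, m_{H/I}, u_{H/I})$ is the usual quotient algebra, and the canonical surjection $\pi\colon H \to H/I$ is an algebra map. Since $I$ is a coideal of $H$, i.e., $\Delta_H(I) \subseteq I \otimes H + H \otimes I$ and $\ep_H(I) = 0$, the maps $\Delta_{H/I}$ and $\ep_{H/I}$ are well-defined, $(H/I, \Delta_{H/I}, \ep_{H/I})$ is the usual quotient coalgebra, and $\pi$ is a coalgebra map. This gives axioms~(i) and~(ii) of Definition~\ref{def:wba}.

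For axiom~\ref{def:wba3}, for $h, k \in H$ we compute in $(H/I) \otimes (H/I)$, using that $\Delta_H(hk) = \Delta_H(h)\Delta_H(k)$,
\[
\Delta_{H/I}\big((h+I)(k+I)\big) = (h_1 k_1 + I) \otimes (h_2 k_2 + I) = \Delta_{H/I}(h+I)\,\Delta_{H/I}(k+I).
\]
For axiom~\ref{def:wba4}, for $h, k, l \in H$ we have
\[
\ep_{H/I}\big((h+I)(k+I)(l+I)\big) = \ep_H(hkl) = \ep_H(h k_1)\ep_H(k_2 l) = \ep_H(h k_2)\ep_H(k_1 l),
\]
and each factor $\ep_H$ on the right equals the corresponding value of $\ep_{H/I}$, so the identity descends to $H/I$.
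Finally, for axiom~\ref{def:wba5}, apply the algebra map $\pi \otimes \pi$, respectively its iterate $(\pi \otimes \pi)\Delta_{H/I}\pi \circ \cdots$, noting that $\pi$ commutes with comultiplication; since $1_{H/I} = \pi(1_H)$ and
\[
\Delta_H^2(1_H) = (\Delta_H(1_H) \otimes 1_H)(1_H \otimes \Delta_H(1_H)) = (1_H \otimes \Delta_H(1_H))(\Delta_H(1_H) \otimes 1_H),
\]
applying $\pi$ in each tensor factor yields the corresponding identity for $\Delta_{H/I}^2(1_{H/I})$. Therefore $(H/I, m_{H/I}, u_{H/I}, \Delta_{H/I}, \ep_{H/I})$ is a weak bialgebra.
\end{proof}
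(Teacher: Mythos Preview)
Your proof is correct and follows essentially the same approach as the paper, which simply records that $H/I$ inherits its algebra and coalgebra structure from the quotient constructions and then states that the compatibility axioms of Definition~\ref{def:wba} follow by a straightforward calculation. You have spelled out those straightforward calculations in more detail than the paper does, but the strategy is identical.
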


\begin{proof}
The structures given above make $H/I$ both an algebra and a coalgebra. A straightforward calculation verifies the compatibility conditions given in \cref{def:wba}. 
\end{proof}

\begin{proposition} 
\label{prop:quotients}
Let $Q$ be a finite quiver and let $I \subseteq \kk Q$ be a graded ideal which is generated in degree $2$ or greater. If $\Ocal^*(\kk Q/I)$ exists (where $*$ means `left', `right', or `trans'), we have $\Ocal^*(\kk Q/I) \cong \hay/\mathcal{I}$, for some biideal $\mathcal{I}$ of $\hay$. 
\end{proposition}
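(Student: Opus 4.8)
The strategy is to produce a surjective morphism of weak bialgebras $\pi\colon\hay\to\Ocal^*(\kk Q/I)$. Once this is in hand, Lemma~\ref{lem:wbakernel} shows that $\mathcal{I}:=\ker\pi$ is a biideal, Lemma~\ref{lem:biideal} equips $\hay/\mathcal{I}$ with a weak bialgebra structure, and the morphism $\hay/\mathcal{I}\to\Ocal^*(\kk Q/I)$ induced by $\pi$ is a bijective weak bialgebra map; its set-theoretic inverse is automatically an algebra and coalgebra map, so $\hay/\mathcal{I}\cong\Ocal^*(\kk Q/I)$ as weak bialgebras. I would carry out the case where $*$ is `left' in detail and indicate the changes for `right' and `trans' at the end. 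Write $A:=\kk Q/I$ and $\Ocal:=\Oleft(A)$, and assume $\Ocal$ exists.

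The point is that $I$, being generated in degrees $\ge 2$, has no nonzero elements in degrees $0$ or $1$, so the quotient map $\kk Q\to A$ restricts to isomorphisms $\kk Q_0\overset{\sim}{\to}A_0$ and $\kk Q_1\overset{\sim}{\to}A_1$. Using these identifications I would write the coaction $\lambda^{\Ocal}$ on the degree-$\le 1$ part of $A$ in the form $\bar e_i\mapsto\sum_{j\in Q_0}y_{i,j}\otimes\bar e_j$ and $\bar q\mapsto\sum_{p\in Q_1}y_{q,p}\otimes\bar p$ with $y_{i,j},y_{p,q}\in\Ocal$. Then I would revisit the proof of Theorem~\ref{thm:main-wba} and check that every ingredient used there to build the weak bialgebra map $\hay\to H$ and to verify it is a weak bialgebra map refers only to the coaction in degrees $0$ and $1$, together with the hypothesis that the degree-$0$ part is isomorphic to $H_t$ as a left $H$-comodule algebra: this holds for Lemma~\ref{lem:wba-structure}\ref{lem:wba-structure-a} (its relations come purely from coassociativity, counitality, and the comodule-algebra axioms applied to the $e_i$ and the arrows $p$), for Proposition~\ref{prop:wba-idempotent-L} (whose proof only manipulates the $y_{i,j}$ and $\psi(e_k)$), and for the ensuing verifications that $\pi$ is unital, multiplicative, a coalgebra map, and counital (the last two being inductions on path length using only multiplicativity of $\pi$ and the $\Delta$'s, weakness of the counit, \eqref{eq:coalgreln1}, \eqref{eq:mult-H}, and $1=\sum y_{i,j}$). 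Since $A_0=\kk Q_0\cong\Ocal_t$ in ${}^{\Ocal}\mathcal{A}$ by hypothesis, all these ingredients are available, and the same construction yields a weak bialgebra map $\pi\colon\hay\to\Ocal$ with $\pi(x_{i,j})=y_{i,j}$ for $i,j\in Q_0$ and $\pi(x_{p,q})=y_{p,q}$ for $p,q\in Q_1$.

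Next I would show that $\pi$ is surjective. Its image is the sub-weak-bialgebra $\mathcal{B}\subseteq\Ocal$ generated as an algebra by $\{y_{i,j}\}\cup\{y_{p,q}\}$ (a subalgebra since $\pi$ is an algebra map, and closed under $\Delta_{\Ocal}$ and $\ep_{\Ocal}$ since $\pi$ is a coalgebra map, and containing $1_{\Ocal}=\pi(1_{\hay})$). As $A$ is generated as an algebra by $\{\bar e_i\}\cup\{\bar q\}$, each of which satisfies $\lambda^{\Ocal}(\,\cdot\,)\in\mathcal{B}\otimes A$, and $\lambda^{\Ocal}$ is multiplicative, it follows that $\lambda^{\Ocal}(A)\subseteq\mathcal{B}\otimes A$. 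Inner-faithfulness of the coaction of $\Ocal=\Oleft(A)$ on $A$, from Lemma~\ref{lem:O-in-faith}\ref{itm:O-i-f}, then forces $\mathcal{B}=\Ocal$, so $\pi$ is onto; taking $\mathcal{I}:=\ker\pi$ finishes the `left' case.

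For the `right' case the argument is the same with $\rho$ in place of $\lambda$, Proposition~\ref{prop:wba-idempotent-R} in place of Proposition~\ref{prop:wba-idempotent-L}, and the hypothesis $A_0\cong\Ocal_s$ in $\mathcal{A}^{\Ocal}$. For the `trans' case, the transposed coaction of $\Otrans(A)$ on $A$ provides left and right coactions on the degree-$\le 1$ part with the same coefficients $y_{i,j},y_{p,q}$, and the construction above — now invoking Theorem~\ref{thm:main-wba} for $\Otrans(\kk Q)\cong\hay$ and drawing on both Propositions~\ref{prop:wba-idempotent-L} and~\ref{prop:wba-idempotent-R} — again gives a surjective weak bialgebra map $\hay\to\Otrans(A)$. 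The step I expect to demand the most care is the reduction in the second paragraph: verifying that the proof of Theorem~\ref{thm:main-wba} genuinely never uses the coaction in degrees $\ge 2$, so that it applies verbatim with $\kk Q$ replaced by $\kk Q/I$; equivalently, that freeness of $\kk Q$ enters only through the degree-$\le 1$ relations among the $y$'s, all of which survive in $\kk Q/I$ precisely because $I$ is concentrated in degrees $\ge 2$.
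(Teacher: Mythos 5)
Your proposal is correct and follows essentially the same route as the paper's proof: reduce to producing a surjective weak bialgebra map $\pi\colon\hay\to\Ocal^{*}(\kk Q/I)$, observe that since $I$ lives in degrees $\ge 2$ the degree-$0$ and degree-$1$ data (and hence Lemma~\ref{lem:wba-structure}, Proposition~\ref{prop:wba-idempotent-L}/\ref{prop:wba-idempotent-R}, and the construction of $\pi$ from Theorem~\ref{thm:main-wba}) carry over verbatim to $\kk Q/I$, and then use inner-faithfulness (Lemma~\ref{lem:O-in-faith}) to get surjectivity. The only cosmetic difference is that you spell out the sub-weak-bialgebra $\mathcal{B}=\operatorname{im}\pi$ in the surjectivity step, which the paper leaves implicit.
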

\begin{remark}
If $I$ has generators in degree 0 or 1, then we can choose a smaller quiver $Q'$ and an ideal $I'$ of $\kk Q'$ such that $\kk Q'/I' \cong \kk Q/I$ as algebras and $I'$ is generated in degree 2 or greater.
\end{remark}
\begin{proof}[Proof of Proposition~\ref{prop:quotients}]
We will prove this statement for $\Oleft(\kk Q/I)$; the other statements follow similarly. By \cref{lem:wbakernel}, it suffices to show that we have a weak bialgebra surjection $\pi: \hay \to \Oleft(\kk Q/I)$, in which case, $\Oleft(\kk Q/I) \cong \hay/\ker \pi$.

Let $\mathcal{O} := \Oleft(\kk Q/I)$. 
For $i\in Q_0$ and $p \in Q_1$, let $\bar{e_i},\:\bar{p}$ denote the images of $e_i,\: p$ in $\kk Q/I$ under the canonical quotient map $\kk Q \to \kk Q/I$ (regarding $p$ as an element of $\kk Q_1$).
Since $I$ is generated in degree 2 or greater, $(\kk Q/I)_0 \cong \kk Q_0$ as algebras, and $\dim_{\kk} (\kk Q/I)_1 = \dim_{\kk} \kk Q_1 = |Q_1|$.
Hence, $\{\bar{e_i} \}_{i \in Q_0}$ is a basis of $(\kk Q/I)_0$ and $\{\bar{p} \}_{p \in Q_1}$ is a basis of $(\kk Q/I)_1$.
We can write $1_{\kk Q/I}=\sum_{i\in Q_0} \bar{e_i}.$ Then we have a linear coaction
\begin{align*}
\lambda: \kk Q/I &\to \mathcal{O} \otimes \kk Q/I  \\
 \bar{e_j} &\mapsto \textstyle \sum_{i \in Q_0} y_{j,i} \otimes \bar{e_i} \\
 \bar{q} &\mapsto \textstyle \sum_{p \in Q_1} y_{q,p} \otimes \bar{p}  
\end{align*}
for some elements $y_{i,j}, y_{p,q} \in \mathcal{O}$. The result of \cref{lem:wba-structure}\ref{lem:wba-structure-a} holds for this coaction. Namely, the proof is the same, except we replace $\kk Q$ with $\kk Q/I$, elements of the form $e_i$ for $i\in Q_0$ with $\bar{e_i}$, and arrows $p \in Q_1$ (regarded as elements of $\kk Q_1$) with $\bar{p}$, making use of the fact that these elements of $\kk Q/I$ still satisfy the fundamental relations $\overline{e_i}\:\overline{e_j}=\delta_{i,j}\bar{e_i}$,  $\:\overline{e_{s(p)}}\:\overline{p}=\bar{p}=\overline{p}\:\overline{e_{t(p)}}$, for $i \in Q_0, p \in Q_1$. 
Therefore, the results of \cref{prop:wba-idempotent-L}\ref{itm:nonzero},\ref{itm:idem-L} also hold, since their proofs use the identities given in \cref{lem:wba-structure}\ref{lem:wba-structure-a}. 
Moreover, by the definition of a UQSGd, there exists a left $\mathcal{O}$-comodule algebra structure on $(\kk Q/I)_0$ such that $\mathcal{O}_t \cong (\kk Q/I)_0$ in ${}^\mathcal{O} \hspace{-.04in}\mathcal{A}$.
Therefore, if we replace $\kk Q_0$ with $(\kk Q/I)_0$ in the statement and proof of \cref{prop:wba-idempotent-L}\ref{itm:psi-L}, also replacing $e_i \in \kk Q_0$ with $\bar{e_i} \in (\kk Q/I)_0$, 
we obtain the same result. 

Now, imitating the proof of \cref{thm:main-wba}, we define a map $\pi$ defined on the algebra generators of $\hay$ and extended multiplicatively and linearly:
$$\pi: \hay \to \mathcal{O} \quad \text{defined by}  \quad
x_{i,j} \mapsto y_{i,j} \text{ for } i,j \in Q_0, \quad
x_{p,q} \mapsto y_{p,q} \text{ for } p,q \in Q_1.
$$
To show that $\pi$ is an algebra map, we can simply follow the proof for \cref{thm:main-wba}, since this proof only uses the results of \cref{lem:wba-structure}\ref{lem:wba-structure-a} and \cref{prop:wba-idempotent-L}. To show that $\pi$ is a coalgebra map, we again follow the proof for \cref{thm:main-wba}, 
replacing the paths $ p=p_1 \cdots p_{\ell-1}p_{\ell} $ and $ q=q_1 \cdots q_{\ell-1}q_{\ell}$
(for $p_i,q_i \in Q_1$) with their images under the canonical quotient map $\kk Q \to \kk Q/I$.
This proof only uses the results of \cref{lem:wba-structure}\ref{lem:wba-structure-a} and \cref{prop:wba-idempotent-L}, the weak bialgebra structure of $\hay$, and the fact that $\pi$ is multiplicative, so the result still holds. Therefore, $\pi$ is a weak bialgebra map.

Finally, we will show that $\pi$ is surjective. By \cref{lem:O-in-faith}, the coaction of $\mathcal{O}$ on $\kk Q/I$ is inner-faithful, and so $\mathcal{O}$ is generated as  a weak bialgebra by the $y_{i,j}$ and $y_{p,q}$ for $i,j \in Q_0$ and $p,q \in Q_1$. 
By the definition of $\pi$ and the fact that $\pi$ is a weak bialgebra map, we can see that $\pi$ is surjective. 
\end{proof}

Every connected graded $\kk$-algebra which is finitely generated in degree one is isomorphic to $\kk Q/I$ where $Q$ is a finite quiver with $|Q_0| = 1$. For these algebras, we obtain the following immediate corollary.

\begin{corollary}
If $Q$ is a finite quiver with $|Q_0| = 1$ and $|Q_1| = n$, then $\mathcal{O}^*(\kk Q/I)$ is a bialgebra quotient of the face algebra $\mathfrak{H}( Q_{n{\text{-loop}}})$ from Example~\ref{ex: Manincase}, where $*$ means `left', `right', or `trans'. \qed
\end{corollary}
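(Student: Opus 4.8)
The plan is to deduce this directly from Proposition~\ref{prop:quotients}, together with the observation that for the $n$-loop quiver Hayashi's face algebra is already an honest bialgebra. First I would record that a quiver $Q$ with $|Q_0| = 1$ and $|Q_1| = n$ is precisely the $n$-loop quiver $Q_{n\text{-loop}}$, so that $\hay = \mathfrak{H}(Q_{n\text{-loop}})$; by Proposition~\ref{prop:epHQ}\ref{prop:epHQs} we have $\dim_\kk (\mathfrak{H}(Q_{n\text{-loop}}))_s = |Q_0| = 1$, whence $\mathfrak{H}(Q_{n\text{-loop}})$ is a bialgebra by Proposition~\ref{prop:Hs-facts}\ref{itm:bialgebra} (as already noted in Example~\ref{ex: Manincase}).

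Next, assuming $\mathcal{O}^*(\kk Q/I)$ exists (for $*$ equal to `left', `right', or `trans'), Proposition~\ref{prop:quotients} supplies a biideal $\mathcal{I} \subseteq \hay$ and a weak bialgebra isomorphism $\mathcal{O}^*(\kk Q/I) \cong \hay/\mathcal{I}$. By Lemma~\ref{lem:biideal}, $\hay/\mathcal{I}$ is a weak bialgebra, and the canonical quotient map $\pi \colon \hay \to \hay/\mathcal{I}$ is a weak bialgebra morphism; it is nonzero because $\hay/\mathcal{I} \cong \mathcal{O}^*(\kk Q/I)$ is a nonzero algebra (being a weak bialgebra that coacts on the nonzero algebra $\kk Q/I$), so $\pi(1_{\hay}) = 1_{\hay/\mathcal{I}} \neq 0$. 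Applying Proposition~\ref{prop:Hs-facts}\ref{itm:preservesubalg} to $\pi$ gives $(\hay/\mathcal{I})_s \cong \hay_s \cong \kk$ as $\kk$-algebras, so $\dim_\kk (\hay/\mathcal{I})_s = 1$, and therefore $\hay/\mathcal{I}$ is a bialgebra, again by Proposition~\ref{prop:Hs-facts}\ref{itm:bialgebra}. Thus $\mathcal{O}^*(\kk Q/I) \cong \hay/\mathcal{I}$ is a bialgebra quotient of $\mathfrak{H}(Q_{n\text{-loop}})$, as claimed.

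The argument is essentially bookkeeping once Proposition~\ref{prop:quotients} is available; the only point requiring a moment's care — and the closest thing to an obstacle — is verifying that the quotient $\hay/\mathcal{I}$ is a genuine bialgebra rather than merely a weak bialgebra. This is exactly where the hypothesis $|Q_0| = 1$ is used, via the one-dimensionality of the source counital subalgebra of $\hay$ and its invariance under the nonzero surjection $\pi$, as encoded in Proposition~\ref{prop:Hs-facts}\ref{itm:preservesubalg}.
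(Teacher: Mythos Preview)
Your proposal is correct and matches the paper's intended argument: the corollary is stated with a \qed and no proof, as it is meant to follow immediately from Proposition~\ref{prop:quotients} together with the observation (from Example~\ref{ex: Manincase}) that $\mathfrak{H}(Q_{n\text{-loop}})$ is already a bialgebra when $|Q_0|=1$. Your appeal to Proposition~\ref{prop:Hs-facts}\ref{itm:preservesubalg} to check the quotient remains a bialgebra is valid, though slightly more than needed---once $\hay$ is a bialgebra, any quotient by a biideal is automatically a bialgebra, since $\Delta(1)=1\otimes 1$ passes to the quotient.
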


The next example is a special case of \cref{prop:quotients}, which describes the UQSGds explicitly as a quotient of $\hay$ when $\kk Q/I$ is the polynomial ring $\kk[t_1, \dots, t_n]$.

\begin{example}\label{ex:polynomialManin}
Let $A = \kk[t_1, \dots, t_n]$. We can describe $A$ as a quotient of a path algebra $\kk Q / I$ where, $Q$ is a quiver with one vertex and $n$ arrows $t_1, \dots, t_n$, and $I = ([t_i, t_j])_{1 \leq i < j \leq n}$. 
Since $A$ is connected graded, as noted in \cref{rem:gen-Manin-oneside,rem:gen-Manin-twoside}, the UQSGds of $A$ are classical UQSGs (bialgebras).
\begin{enumerate}[(a),font=\upshape]
\item\label{itm:polynomial-Manin}
By \cite[Theorem~1]{AST}, we have that
\[O^{\textnormal{trans}} (\kk Q/ I) = O^{\textnormal{trans}} (\kk[t_1, \dots, t_n]) \cong O(\text{Mat}_n(\kk)).\] 
We will show that $O^{\textnormal{trans}} (\kk Q/I) \cong \hay/\mathcal{I}$, where $\hay$ is Hayashi's face algebra attached to an $n$-loop quiver from \cref{ex: Manincase} and $\mathcal{I}$ is the biideal of $\hay$ generated by the commutators $[x_{t_i, t_j}, x_{t_k, t_{\ell}}]$ for $1 \leq i,j,k,{\ell} \leq n$.

Let $O := O^{\textnormal{trans}} (A)$. Tracing through the two-sided version of \cref{prop:quotients}, we have coactions
\begin{align*}  
\lambda: A \to O\otimes A, \quad \lambda(t_i) = \textstyle{\sum_{j=1}^n} \; y_{t_i, t_j}\otimes t_j,\\
\rho: A \to A \otimes O, \quad \rho(t_i) = \textstyle{\sum_{j=1}^n} \; t_j\otimes y_{t_j, t_i}.
\end{align*}
Using the fact that $\lambda(t_i)\lambda(t_j) = \lambda(t_j)\lambda(t_i)$ and $\rho(t_i)\rho(t_j) = \rho(t_j)\rho(t_i)$ for all $1 \leq i,j \leq n$, one can show that all of the elements $y_{t_i, t_j}$ commute in $O$. Applying the coassociative and counital properties  of $\rho$ and $\lambda$, we can obtain the coalgebra structure on $O$, namely:
\[ \Delta(y_{t_i, t_j}) = \textstyle{\sum_{k=1}^n} \; y_{t_i, t_k} \otimes y_{t_k, t_i} \quad \text{ and } \quad \ep(y_{t_i, t_j}) = \delta_{i,j}.
\]
Thus, we can see that this presentation of $O$ agrees with the usual presentation of the bialgebra $O(\text{Mat}_n(\kk))$.

Now let $\pi: \mathfrak{H}(Q) \to O^{\textnormal{trans}} (A)$ be the surjective weak bialgebra map given in \cref{prop:quotients}, namely $\pi(x_{t_i,t_j})=y_{t_i, t_j}$. This is the canonical surjection of the free algebra on $n^2$ generators onto the polynomial algebra in $n^2$ variables and hence its kernel $\mathcal{I}$ is generated as a biideal by all commutators $[x_{t_i, t_j}, x_{t_k, t_{\ell}}]$.

\medskip

\item \label{itm:polynomial-leftright}
The left UQSGd $O^{\text{left}}(A)$ and right UQSGd $O^{\text{right}}(A)$ are the `half quantum groups' described in the introduction (see, e.g., \cite{CFR}). Explicitly, one can check that $O^{\text{left}}(A)$ is the quotient of $\hay$ by the biideal generated by 
\[
\quad \quad \quad \left\{[x_{t_i, t_j}, x_{t_k, t_j}]\right\}_{1 \leq i,j,k \leq n} \quad \text{ and }  \quad \left\{[x_{t_i, t_j}, x_{t_k, t_{\ell}}] - [x_{t_k, t_j}, x_{t_i, t_{\ell}}]\right\}_{1 \leq i,j,k,{\ell} \leq n \text{ with } j \neq l}.
\]
Similarly, $O^{\text{right}}(A)$ is the quotient of $\hay$ by the biideal generated by
\[
\quad \quad \quad \left\{[x_{t_i, t_j}, x_{t_i, t_k}]\right\}_{1 \leq i,j,k \leq n} \quad \text{ and }  \quad \left\{[x_{t_i, t_j}, x_{t_k, t_{\ell}}] - [x_{t_i, t_{\ell}}, x_{t_k, t_j}]\right\}_{1 \leq i,j,k,{\ell} \leq n \text{ with } i \neq k}.
\]
 \end{enumerate}
 Hence, when $A$ is a proper quotient of $\kk Q$, we need not have $O^{\textnormal{left}}(A) \cong O^{\textnormal{trans}} (A) \cong O^{\textnormal{right}}(A)$, in contrast with the path algebra case of \cref{thm:main-wba}.
\end{example}

 Next we turn our attention to UQSGds of quadratic quotient algebras $\kk Q/I$. Consider the following terminology.

\begin{definition}[{\cite[Section~2]{GMV}}, {\cite[Section~1]{MV}}, {\cite{Gawell}}]
\label{def:quaddual} Let $Q$ be a finite quiver and suppose $I$ is a graded ideal of the path algebra $\kk Q$.
\begin{enumerate} 
    \item The {\it opposite quiver} $Q^{\text{op}}$ of $Q$ is defined to be the quiver formed by $(Q^{\text{op}})_0 = Q_0$ and $(Q^{\text{op}})_1 = Q_1^*$, where $Q_1^*$ is the arrow set consisting of reversed arrows of $Q_1$. For $p \in Q_1$, its reverse in $Q_1^*$ is denoted by $p^*$.
    If $a = p_1 \dots p_{\ell}$ is a path of length $\ell$ in $Q$, then we let $a^* = p_{\ell}^* \dots p_1^* \in Q^{\text{op}}$.
    If $f = \sum_i \alpha_i a_i$ is an element of $\kk Q$, the element $f^* \in \kk Q^{\text{op}}$ is defined to be $\sum_i \alpha_i a_i^*$.
    
    \smallskip 
    
    \item We identify $\kk Q^{\op}_\ell$ with $(\kk Q_\ell)^*$ so that if $\{a_1, \dots, a_d\}$ is the basis of $\kk Q_\ell$ consisting of paths of length $\ell$, then $\{a_1^*, \dots, a_d^*\}$ is the dual basis.
         
     \smallskip 
    
    \item We call the quotient algebra $\kk Q/I$ {\it quadratic} if  $I$ is generated by elements of $\kk Q_2$.
    
    \smallskip
    
    \item The {\it quadratic dual}  of the quadratic algebra $\kk Q/I$ is defined to be
    $$(\kk Q / I)^! = \kk Q^{\text{op}}/I_{\text{op}}^{\perp},$$
    where $I_{\text{op}}^\perp$ is the ideal of $\kk Q^{\text{op}}$ generated by the orthogonal complement of the set $I_{\text{op}}:=\{f^* \in \kk Q^{\text{op}} \mid f \in I \cap \kk Q_2\}$ in $\kk Q^{\text{op}}_2$.
    \end{enumerate}
\end{definition}

\begin{remark}
As is our convention of Notation~\ref{not:quiver}, we still read paths from left-to-right in $Q^{\text{op}}$. Hence, in $\kk Q^{\text{op}}$ we have
\[ q^* p^* = (pq)^*
\]
for $p,q \in Q$ (which is nonzero when $s(p^*) = t(p) = s(q) = t(q^*)$). Note that identifying $p \in \kk Q$ with $p^* \in \kk Q^\text{op}$ yields an anti-isomorphism of algebras and so $\kk Q^{\text{op}} \cong (\kk Q)^{\text{op}}$.

For the face algebras $\hay$ and $\mathfrak{H}(Q^{\text{op}})$ attached to $Q$ and $Q^{\text{op}}$, respectively, the map which sends $x_{a,b} \in \hay$ to $x_{a^*, b^*} \in \mathfrak{H}(Q^{\text{op}})$ is an anti-isomorphism of algebras and an isomorphism of coalgebras. As weak bialgebras, $\mathfrak{H}(Q^{\text{op}}) \cong \hay^{\text{op}}$. 
\end{remark}

The following theorem is a non-connected generalization of \cite[Theorem~5.10]{Manin}.

\begin{theorem} \label{thm:quad}
Let $Q$ be a finite quiver and suppose $I$ is an ideal such that $\kk Q/I$ is quadratic. Then, we have that
\begin{enumerate} [(a),font=\upshape] 
    \item \label{itm:left-right} $\Oleft(\kk Q/I) \cong  \Oright((\kk Q/I)^!)^{\op}$,
    \smallskip
    \item \label{itm:right-left} $\Oright(\kk Q/I) \cong \Oleft((\kk Q/I)^!)^{\op}$,
    \smallskip
    \item \label{itm:left-right-same} $\Oleft(\kk Q/I) \cong \Oright(\kk Q/I)^{\cop}$,
    \smallskip
    \item \label{itm:Manin-Manin} $\Otrans(\kk Q/I) \cong \Otrans((\kk Q/I)^!)^{\op}$,
\end{enumerate}
as weak bialgebras. 
\end{theorem}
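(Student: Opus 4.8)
The plan is to realize each universal quantum linear semigroupoid of a quadratic quotient $\kk Q/I$ as an explicit quotient of Hayashi's face algebra by a biideal that is ``generated in degree two'', and then to transport these biideals across the anti-isomorphism $\Phi : \hay \to \mathfrak{H}(Q^{\op})$, $x_{a,b} \mapsto x_{a^*,b^*}$ (an isomorphism of coalgebras, whence $\mathfrak{H}(Q^{\op}) \cong \hay^{\op}$ as weak bialgebras). Write $W := I \cap \kk Q_2 \subseteq \kk Q_2$ for the space of quadratic relations, and let $W^\perp \subseteq \kk Q^{\op}_2 = (\kk Q_2)^*$ be its annihilator, which is exactly the degree-two part of the defining ideal of $(\kk Q/I)^!$. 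First I would sharpen Proposition~\ref{prop:quotients}: since $\kk Q/I$ is quadratic, the left coaction of $\hay$ on $\kk Q$ descends to $\kk Q/I$ precisely when it does so in degree two, and unwinding the degree-two component (using $x_{ab,pq} = x_{a,p}x_{b,q}$ together with Lemma~\ref{lem:wba-structure}) shows $\Oleft(\kk Q/I) \cong \hay/\mathcal{I}^{\mathrm{L}}$, where $\mathcal{I}^{\mathrm{L}}$ is the biideal of $\hay$ generated by the set $\bigl\{\, \sum_{a,b,p,q \in Q_1} \alpha_{ab}\,\xi_{pq}\, x_{a,p} x_{b,q} \,\bigr\}$, as $\sum_{a,b}\alpha_{ab}\, ab$ ranges over $W$ and $\sum_{p,q}\xi_{pq}\,(pq)^*$ ranges over $W^\perp$. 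A short check (as in the classical quadratic case) shows this ideal is automatically a coideal, so the quotient is a weak bialgebra, and repeating the universality argument of Theorem~\ref{thm:main-wba} with these extra relations built in shows that $\Oleft(\kk Q/I)$ genuinely exists. The analogous statements hold for the right coaction of $\hay$ on $\kk Q$, giving $\Oright(\kk Q/I) \cong \hay/\mathcal{I}^{\mathrm{R}}$, and for the left and right coactions of $\mathfrak{H}(Q^{\op})$ on $\kk Q^{\op}$, giving $\Oleft((\kk Q/I)^!)$ and $\Oright((\kk Q/I)^!)$ as quotients of $\mathfrak{H}(Q^{\op})$ by biideals described in the same way from $W^\perp$ and $(W^\perp)^\perp$.

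Next I would apply $\Phi$, which carries biideals to biideals and realizes $\mathfrak{H}(Q^{\op})/\Phi(\mathcal{J}) \cong (\hay/\mathcal{J})^{\op}$. Since $\Phi(x_{a,p} x_{b,q}) = x_{b^*,q^*} x_{a^*,p^*} = x_{(ab)^*,(pq)^*}$ by anti-multiplicativity and the symbol convention, a direct comparison of generating sets shows $\Phi(\mathcal{I}^{\mathrm{L}}(\kk Q/I))$ equals the biideal defining $\Oright((\kk Q/I)^!)$: both are generated by $\sum \alpha_w \xi_v\, x_{w^*,v^*}$ as $\sum \alpha_w w$ ranges over $W$ and $\sum \xi_v v^*$ ranges over $W^\perp$, using that the degree-two defining space of $(\kk Q/I)^!$ is $W^\perp$ and that $(W^\perp)^\perp = W$. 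Passing to quotients gives part~\ref{itm:left-right}, $\Oleft(\kk Q/I) \cong \Oright((\kk Q/I)^!)^{\op}$. Part~\ref{itm:right-left} then follows by applying~\ref{itm:left-right} to $(\kk Q/I)^!$ and using $((\kk Q/I)^!)^! \cong \kk Q/I$ (which in degree two is the double-annihilator identity $(W^\perp)^\perp = W$). For part~\ref{itm:Manin-Manin}, the transposed $\hay$-coaction of Example~\ref{ex:haycoaction} descends to $\kk Q/I$ if and only if both its left and right parts do, so $\Otrans(\kk Q/I) \cong \hay/(\mathcal{I}^{\mathrm{L}} + \mathcal{I}^{\mathrm{R}})$, with existence and universality obtained by the argument of Proposition~\ref{prop:left=right}; the symmetric version of the computation above shows $\Phi(\mathcal{I}^{\mathrm{R}}(\kk Q/I))$ is the left-coaction biideal of $(\kk Q/I)^!$, so $\Phi$ sends $\mathcal{I}^{\mathrm{L}} + \mathcal{I}^{\mathrm{R}}$ to the corresponding sum for $(\kk Q/I)^!$, yielding $\Otrans(\kk Q/I) \cong \Otrans((\kk Q/I)^!)^{\op}$.

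Part~\ref{itm:left-right-same} does not use quadraticity. A left $H$-comodule algebra is the same thing as a right $H^{\cop}$-comodule algebra via the flip of tensor factors, and by Proposition~\ref{prop:Hs-facts}\ref{itm:coideal} one has $(H^{\cop})_s = H_t$ and $(H^{\cop})_t = H_s$; hence the universal left coaction of $\Oleft(\kk Q/I)$ on $\kk Q/I$, with $\Oleft(\kk Q/I)_t \cong (\kk Q/I)_0$, becomes the universal right coaction of $\Oleft(\kk Q/I)^{\cop}$, with $(\Oleft(\kk Q/I)^{\cop})_s \cong (\kk Q/I)_0$. Therefore $\Oright(\kk Q/I) \cong \Oleft(\kk Q/I)^{\cop}$, equivalently $\Oleft(\kk Q/I) \cong \Oright(\kk Q/I)^{\cop}$.

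The main obstacle I expect is bookkeeping rather than anything conceptual: keeping careful track of the dual-basis identification $\kk Q^{\op}_\ell = (\kk Q_\ell)^*$ and the path reversal $q^* p^* = (pq)^*$ through the symbols $x_{a,b}$, so that the generating sets of the various biideals match on the nose under $\Phi$; and, secondarily, pinning down the biideal of the transposed UQSGd together with the existence of all the UQSGds in the quadratic case, which requires adapting the universality arguments of Theorem~\ref{thm:main-wba} and Proposition~\ref{prop:left=right} so as to incorporate the degree-two relations.
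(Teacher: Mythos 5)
Your proposal is correct, and for parts~\ref{itm:left-right}, \ref{itm:right-left}, and \ref{itm:Manin-Manin} it follows essentially the same route as the paper: write $I\cap \kk Q_2 = W$, identify the defining biideal of $\Oleft(\kk Q/I)$ inside $\hay$ as the one generated by the degree-two elements $\sum \alpha_{ab}\xi_{pq}\,x_{a,p}x_{b,q}$ with $\sum\alpha_{ab}\,ab \in W$ and $\sum\xi_{pq}(pq)^* \in W^\perp$ (the paper's $\sum c^{[\alpha]}_{i,j}d^{[\gamma]}_{k,\ell}x_{p_i,p_k}x_{p_j,p_\ell}$), do the same for the right coaction of $\mathfrak{H}(Q^{\op})$ on $(\kk Q/I)^!$, and match the two generating sets under the anti-isomorphism $x_{a,b}\mapsto x_{a^*,b^*}$ using $(W^\perp)^\perp = W$; part~\ref{itm:right-left} then follows from~\ref{itm:left-right} and $((\kk Q/I)^!)^! \cong \kk Q/I$ exactly as you say. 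Where you genuinely diverge is part~\ref{itm:left-right-same}: the paper computes the right-coaction biideal $\mathcal{I}'$ explicitly and exhibits the isomorphism $x_{p_i,p_k}\mapsto x_{p_k,p_i}$, whereas you observe that a left $H$-comodule algebra is the same as a right $H^{\cop}$-comodule algebra with $(H^{\cop})_s = H_t$, so the two universal properties are literally interchanged. Your argument is cleaner, works for arbitrary graded quotients (no quadraticity needed), and is correct.

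Two small caveats. First, for part~\ref{itm:Manin-Manin} you cannot invoke Proposition~\ref{prop:left=right} even ``in spirit'': its hypothesis is that $\Oleft \cong \Oright$ with transposed coactions, which fails for quotients in general (Example~\ref{ex:polynomialManin}). What you actually need, and what your description $\Otrans(\kk Q/I)\cong \hay/(\mathcal{I}^{\mathrm{L}}+\mathcal{I}^{\mathrm{R}})$ implicitly uses, is the direct argument: any weak bialgebra with a transposed coaction on $\kk Q/I$ satisfying the base condition receives a map from $\hay$ whose kernel contains both $\mathcal{I}^{\mathrm{L}}$ and $\mathcal{I}^{\mathrm{R}}$, and $\hay/(\mathcal{I}^{\mathrm{L}}+\mathcal{I}^{\mathrm{R}})$ itself carries transposed coactions. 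Second, your existence claims (that the displayed quotients really are the UQSGds) require carrying out the universality argument of Theorem~\ref{thm:main-wba} with the extra degree-two relations; you flag this but do not execute it --- the paper sidesteps the issue by phrasing Proposition~\ref{prop:quotients} conditionally on existence. Neither point affects the correctness of the overall strategy.
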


\begin{proof}
We will only provide the proofs of parts \ref{itm:left-right} and \ref{itm:left-right-same}, as other parts will hold by similar arguments.  To start, suppose that $Q_1 = \{p_1, \dots, p_n\}.$ Then, 
\[
I = \left\langle r_\alpha:= \textstyle \sum_{i,j = 1 \text{ with } t(p_i) = s(p_j)}^n c_{i,j}^{[\alpha]}\; p_i \;p_j \right\rangle_{\alpha = 1,  \dots, m}\; \subseteq \kk Q_2
\]
for some scalars $c_{i,j}^{[\alpha]}$. Moreover, we have
\[I_{\text{op}}^\perp = \left\langle  r_{\beta}^* := \textstyle \sum_{k,\ell = 1 \text{ with } t(p_\ell^*) = s(p_k^*)}^n d_{k,\ell}^{[\beta]} \; p_{\ell}^* p_k^* \right\rangle_{\beta = 1,  \dots, |Q_2|-m}\; \subseteq \kk Q_2^{\text{op}}
\]
for some scalars $d_{k,\ell}^{[\beta]}$. Here,
$\textstyle \sum_{i,j = 1 \text{ with } t(p_i) = s(p_j)}^n d_{i,j}^{[\beta]} \; c_{i,j}^{[\alpha]} = 0$
for each pair $\alpha, \beta$. 

\medskip 

\ref{itm:left-right} By Proposition~\ref{prop:quotients}, we have that
$\Oleft(\kk Q/I) = \hay/ \mathcal{I}$ for some biideal $\mathcal{I}$ of $\hay$, with the coalgebra structure induced by $\hay :\Delta(x_{p_i, p_k}) = \sum_{w=1}^n x_{p_i, p_w} \otimes x_{p_w, p_k}$ and $\varepsilon(x_{p_i, p_k}) = \delta_{i,k}$. We assert that 
$$\mathcal{I} = \left\langle  \sum_{\substack{i,j,k,\ell = 1\\ t(p_i) = s(p_j),\; t(p_k) = s(p_\ell)}}^n c_{i,j}^{[\alpha]}\; d_{k,\ell}^{[\beta]} \;x_{p_i,p_k}\; x_{p_j,p_\ell}\right\rangle_{\substack{\alpha = 1,\dots, m \\ \beta = 1, \dots, |Q_2|-m}}.$$

\smallskip

\noindent Namely, there exists a basis of $\kk Q_2$ consisting of elements $\{r_{\alpha}\}_{\alpha=1,\dots,m}$ and $\{s_{\gamma}\}_{\gamma=1, \dots, |Q_2|-m}$ so that the evaluation $\langle r_{\beta}^*, s_\gamma \rangle = \delta_{\beta,\gamma}$
for each $\beta,\gamma = 1,\dots, |Q_2|-m$. Moreover, for each $k,\ell$, we can write
\begin{equation} \label{eq:quad}
   \textstyle p_k p_\ell = \sum_\gamma d_{k,\ell}^{[\gamma]} \; s_\gamma + \sum_\alpha e_{k,\ell}^{[\alpha]} \; r_\alpha
\end{equation}
for some scalars $e_{k,\ell}^{[\alpha]}$. (This can be checked by evaluation with $r_{\beta}^*$.)
Now, the left coaction of $\hay$ on $\kk Q$, given by $p_i \mapsto \sum_k x_{p_i,p_k} \otimes p_k$ from Example~\ref{ex:haycoaction}, preserves the relation $r_\alpha$ if and only if the following expression lies in $\mathcal{O} \otimes I$:
\[
\begin{array}{l}
\medskip
\textstyle \sum_{i,j} c_{i,j}^{[\alpha]} (\sum_k x_{p_i,p_k} \otimes p_k)(\sum_\ell x_{p_j,p_\ell} \otimes p_\ell)\\
\medskip
= \textstyle \sum_{\substack{i,j,k,l\\ t(p_i) = s(p_j)\\ t(p_k) = s(p_\ell)}} c_{i,j}^{[\alpha]}\; x_{p_i,p_k} \; x_{p_j,p_\ell} \otimes p_k p_\ell\\
\overset{\eqref{eq:quad}}{=} \textstyle \sum_{\substack{\gamma, i,j,k,l\\ t(p_i) = s(p_j)\\ t(p_k) = s(p_\ell)}} c_{i,j}^{[\alpha]}\; d_{k,\ell}^{[\gamma]}\; x_{p_i,p_k} \; x_{p_j,p_\ell} \otimes s_\gamma
\; + \; \sum_{\substack{\alpha',i,j,k,l\\ t(p_i) = s(p_j)\\ t(p_k) = s(p_\ell)}} c_{i,j}^{[\alpha]}\; e_{k,\ell}^{[\alpha']}\; x_{p_i,p_k} \; x_{p_j,p_\ell} \otimes r_{\alpha'} .
\end{array}
\]
Since $\{s_{\gamma}\}_{\gamma} \cup \{r_{\alpha'} \}_{\alpha '}$ is a basis of $\kk Q_2$, we must have that 
\[\bigg\{ \sum_{\substack{i,j,k,\ell=1\\ t(p_i) = s(p_j)\\ t(p_k) = s(p_\ell)}}^n c_{i,j}^{[\alpha]}\; d_{k,\ell}^{[\gamma]}\; x_{p_i,p_k} \; x_{p_j,p_\ell} = 0\bigg\}_{\substack{\alpha = 1,\dots, m\\ \gamma = 1, \dots, |Q_2|-m}}\]
are the generators of the relation space $\mathcal{I}$ for $\Oleft(\kk Q/I)$ as in Proposition~\ref{prop:quotients}. 
 
On the other hand, by Example~\ref{ex:haycoaction} we  have a right coaction of $\hayop$ on $\kk Q^\op$ given by $p_i^* \mapsto \sum_k p_k^* \otimes x_{p_k^*, p_i^*}$. By a similar argument as above, this coaction preserves the relations of $I_{\op}^\perp$ if and only if
\[\bigg\{ \sum_{\substack{i,j,k,\ell = 1\\ t(p_j^*) = s(p_i^*)\\ t(p_{\ell}^*) = s(p_k^*)}}^n d_{k,\ell}^{[\beta]} c_{i,j}^{[\eta]}\; \; x_{p_{j}^*,p_{\ell}^*} \; x_{p_i^*,p_k^*} = 0\bigg\}_{\substack{\eta = 1,\dots, m\\ \beta = 1, \dots, |Q_2|-m}}
\]
are the generators of the relation space of $\Oright( (\kk Q/I)^!)$ as a quotient of $\hayop$. Hence,
\[\Oright((\kk Q/I)^!) = \hayop/ \left\langle \sum_{\substack{i,j,k,\ell = 1\\ t(p_j^*) = s(p_i^*), \; t(p_{\ell}^*) = s(p_k^*)}}^n c_{i,j}^{[\alpha]} d_{k,\ell}^{[\beta]} \; \; x_{p_{j}^*,p_{\ell}^*} \; x_{p_i^*,p_k^*} \right\rangle_{\substack{\alpha = 1,\dots, m\\ \beta = 1, \dots, |Q_2|-m}},
\]

\smallskip

\noindent with $\Delta(x_{p_i^*, p_k^*}) = \sum_{w=1}^n x_{p_i^*, p_w^*} \otimes x_{p_w^*, p_k^*}$  and $\varepsilon(x_{p_i^*, p_k^*}) = \delta_{i,k}$.

Now, the desired isomorphism from $\Oleft(\kk Q/I)$ to  $\Oright((\kk Q/I)^!)^{\op}$ is obtained by sending $x_{p_i,p_k}$ to $x_{p_i^*,p_k^*}$.

\medskip

\ref{itm:left-right-same} By Proposition~\ref{prop:quotients}, we have that
$\Oright(\kk Q/I) = \hay/ \mathcal{I}'$ for some biideal $\mathcal{I}'$ of $\hay$, with the coalgebra structure induced by $\hay :\Delta(x_{p_k, p_i}) = \sum_{w=1}^n x_{p_k, p_w} \otimes x_{p_w, p_i}$ and $\varepsilon(x_{p_k, p_i}) = \delta_{i,k}$. Now, the right coaction of $\hay$ on $\kk Q$, given by $p_i \mapsto \sum_k  p_k \otimes x_{p_k,p_i}$ from Example~\ref{ex:haycoaction}, preserves each relation $r_\alpha$ of $I$ if and only if 
$$\mathcal{I}' = \left\langle  \sum_{\substack{i,j,k,\ell = 1\\ t(p_i) = s(p_j),\; t(p_k) = s(p_\ell)}}^n c_{i,j}^{[\alpha]}\; d_{k,\ell}^{[\beta]} \;x_{p_k,p_i}\; x_{p_\ell,p_j}\right\rangle_{\substack{\alpha = 1,\dots, m\\ \beta = 1, \dots, |Q_2|-m}}.$$

\smallskip

\noindent Now, considering the presentation of $\Oleft(\kk Q/I)$ from part~\ref{itm:left-right}, the desired isomorphism  from $\Oleft(\kk Q/I)$ to  $\Oright(\kk Q/I)^{\cop}$ is obtained by sending $x_{p_i,p_k}$ to $x_{p_k,p_i}$.
\end{proof}

\begin{example}
Let $A = \kk Q$. Then $A^! = \kk Q^{\op}/ \langle \kk (Q^{\op})_2 \rangle$ where $\langle \kk (Q^{\text{op}})_2 \rangle$ is the ideal of $\kk Q^{\text{op}}$ generated by the space $\kk (Q^{\text{op}})_2$. By the above theorem, we have
\[\Oleft(A)\cong \Oright(A^!)^{\op}
\]
as weak bialgebras.  Since, by Theorem~\ref{thm:main-wba}, $\Oleft(\kk Q) \cong \hay$, we have that $\Oright(A^!)^{\op} \cong \hay$. Further, $\hay^{\op} \cong \hayop$, and so we conclude that 
\[ \Oright(A^!) \cong \hayop.
\]
Similarly, we have that $\Oleft(A^!) \cong \Otrans(A^!) \cong \hayop$ as weak bialgebras.
\end{example}

We end with a family of concrete examples of UQSGds for quadratic quotient path algebras-- namely, those for preprojective algebras.

\begin{example}
Let $Q$ be the extended type $A$ Dynkin quiver with $|Q_0|\geq 3$, and consider its {\it double} $\bar{Q}$ formed by adding $p^*$ for each $p \in Q_1$. For example, when $|Q_0|=3$, 
\[Q=\begin{tikzcd}
{} & 3 \arrow[ld,swap,"p_3"] & {}\\
1 \arrow[rr, swap,"p_1"] & {} & 2 \arrow[lu, swap,"p_2"]
\end{tikzcd}
\hspace{1in} \bar{Q}=\begin{tikzcd}
{} & 3 \arrow[rd,  swap,"p_2^*"]\arrow[ld,swap, bend right, "p_3"] & {}\\
1 \arrow[ru,swap,  "p_3^*"]\arrow[rr, bend right, swap,"p_1"] & {} & 2. \arrow[ll,  "p_1^*"]\arrow[lu, bend right, swap,"p_2"]
\end{tikzcd}\]
The {\it preprojective algebra} on $Q$ is defined to be the $\kk$-algebra, 
\[\Pi_Q = \kk \bar{Q}/ \textstyle (\sum_{i \in Q_0} p_ip_i^* - \sum_{i \in Q_0} p_{i-1}^*p_{i-1}).\]
(Here, we index the vertices $i$ by elements of $\mathbb{Z}/|Q_0|\mathbb{Z}$.)
By \cite[Section~3]{Weis}, we have that
\[\Pi_Q \cong \kk \bar{Q}/(p_ip_i^* - p_{i-1}^*p_{i-1})_{i \in Q_0}\]
as $\kk$-algebras.
Therefore, any path in $Q$ can be rewritten (in $\Pi_Q$) so that all of the nonstar arrows occur, followed by all of the star arrows.
We omit the details here, but we have that, as weak bialgebras,
$$\mathcal{O^{\textnormal{left}}}(\Pi_Q) \cong \mathfrak{H}(\bar{Q})/\mathcal{I}, 
\quad \quad \; 
\mathcal{O^{\textnormal{right}}}(\Pi_Q) \cong \mathfrak{H}(\bar{Q})/\mathcal{J},
\quad \quad \;  
\mathcal{O^{\textnormal{trans}}}(\Pi_Q) \cong \mathfrak{H}(\bar{Q})/(\mathcal{I} + \mathcal{J}),$$
for 
\[
\mathcal{I}=
\left\langle
\begin{array}{c}
x_{p_k,p_i}x_{p_k^*,p_{i+1}}-x_{p_{k-1}^*,p_i}x_{p_{k-1},p_{i+1}},\\ 
(x_{p_k,p_i}x_{p_k^*,p_i^*}+ x_{p_k,p_{i-1}^*}x_{p_k^*,p_{i-1}})-( x_{p_{k-1}^*,p_i}x_{p_{k-1},p_i^*}+x_{p_{k-1}^*,p_{i-1}^*}x_{p_{k-1},p_{i-1}}), \\
x_{p_k,p_i^*} x_{p_k^*,p_{i-1}^*}-x_{p_{k-1}^*,p_i^*}x_{p_{k-1},p_{i-1}^*}
\end{array}
\right\rangle_{i,k \in Q_0},
\]
\medskip
\[
   \mathcal{J}= 
     \left\langle 
     \begin{array}{c}
     x_{p_i,p_k}x_{p_{i+1},p_{k}^*}-x_{p_i,p_{k-1}^*}x_{p_{i+1},p_{{k-1}}},\\
   (x_{p_i,p_k}x_{p_i^*,p_k^*}+ x_{p_{i-1}^*,p_k}x_{p_{i-1},p_k^*})-(x_{p_i,p_{k-1}^*}x_{p_i^*,p_{k-1}}+ x_{p_{i-1}^*,p_{k-1}^*}x_{p_{i-1},p_{k-1}}),\\
   x_{p_i^*,p_k} x_{p_{i-1}^*,p_k^*}- x_{p_i^*,p_{k-1}^*} x_{p_{i-1}^*,p_{k-1}}.
    \end{array}
     \right\rangle _{i,k \in Q_0}.
\]
\end{example}

\medskip

\begin{remark} \label{rem:N-homog}
The universal quantum semigroupoids analyzed in Theorem~\ref{thm:quad} generalize Manin's universal semigroups for quadratic algebras \cite{Manin}, and likewise, universal quantum semigroupoids  exist and can be constructed explicitly for $N$-homogeneous algebras. This, in turn, generalizes Chirvasitu--Walton--Wang's universal quantum semigroups for such algebras \cite{CWW} to the non-connected setting. Prevalent non-connected $N$-homogeneous algebras include quiver (super)potential algebras, which are used frequently in cluster theory \cite{DWZ}, in Donaldson--Thomas theory \cite{MosRei}, and in other fields; we expect that the universal symmetries of these algebras will have similar applications.
\end{remark}

\section{For further investigation: universal quantum linear groupoids} \label{sec:UQGd}

In this section, we consider weak \emph{Hopf} algebras that coact universally (and linearly) on an algebra $A$ in Hypothesis~\ref{hyp:A}, and propose directions for future research. First, let us recall the notion of a universal coacting Hopf algebra, prompted by \cite[Chapter~7]{Manin}.

\begin{definition}[UQG]
Take $A$ as in Hypothesis~\ref{hyp:A} and further assume that $A$ is connected. Then a Hopf algebra  is said to be a {\it left (resp., right, transposed) universal quantum linear group (UQG) of $A$} if it satisfies the conditions of Definition~\ref{def:UQSG-intro}\ref{itm:leftUQSG-intro} (or, Definition~\ref{def:UQSG-intro}\ref{itm:rightUQSG-intro}, Definition~\ref{def:ManinUQSG-intro}\ref{itm:ManinUQSG-intro}) by replacing `bialgebra' with `Hopf algebra'.
\end{definition}

A general way of constructing a UQG from a UQSG is by taking the {\it Hopf envelope} as discussed briefly in \cite[Section~7.5]{Manin}. Other explicit constructions involve the {\it quantum determinant} (also known as the {\it homological determinant}), which is a (typically) central group-like element $\mathsf{D}$ of a UQSG that depends on the UQSG coaction on $A$. Here, one takes a UQSG, say $O^{\textnormal{trans}}(A)$, and forms two Hopf algebras depending on whether the quantum determinant is trivial (i.e., equal to the unit) or is arbitrary:
$$ O_{\text{SL}}^{\textnormal{trans}}(A):= O^{\textnormal{trans}}(A)/ (\mathsf{D} - 1), \quad \quad  O_{\text{GL}}^{\text{trans}}(A):= O^{\textnormal{trans}}(A)[\mathsf{D}^{-1}].$$
We refer to these universal Hopf algebras as UQGs {\it of SL-type} and of {\it GL-type}, respectively. Appearances of such Hopf algebras in the literature include those in \cite{DVL, Bichon, BichonDV} for SL-type, \cite{AST, Takeuchi, Mrozinski} for GL-type, and \cite{WaltonWang, CWW} for both  types; see also references therein.

\smallskip

It is therefore natural to ask if this can be generalized to the framework of universal coacting weak Hopf algebras. We recall the definition of a weak Hopf algebra below.

\begin{definition} \label{def:wha}
A \textit{weak Hopf algebra} is a sextuple $(H,m,u,\Delta,\varepsilon, S)$, where $(H,m,u,\Delta,\varepsilon)$ is a weak bialgebra and $S: H \to H$ is a $\kk$-linear map called the \textit{antipode} that satisfies the following properties for all $h\in H$:
 $$S(h_1)h_2=\varepsilon_s(h), \quad \quad
h_1S(h_2)=\varepsilon_t(h), \quad \quad
S(h_1)h_2S(h_3)=S(h).$$
\end{definition}

Note that if $H$ is a weak Hopf algebra, the following are equivalent:
 $H$ is a Hopf algebra;
 $\Delta(1)=1\otimes 1$;
 $\ep(xy)=\ep(x)\ep(y)$ for all $x,y\in H$;
$S(x_1)x_2=\ep(x)1$ for all $x \in H$; and
 $x_1S(x_2)=\ep(x)1$ for all $x \in H$ \cite[page~5]{BNS}. 

\smallskip

Now we define a universal weak Hopf algebra, similar to the manner that a UQG was defined above, for $A$ not necessarily connected.

\begin{definition}[UQGd]
Take $A$ as in Hypothesis~\ref{hyp:A}. Then a weak Hopf algebra is said to be a {\it left (resp., right, transposed) universal quantum linear groupoid of $A$} if it satisfies the conditions of part~\ref{itm:leftUQSGd-intro} (resp., \ref{itm:rightUQSGd-intro}, \ref{itm:ManinUQSGd-intro}) of Definition~\ref{def:UQSGd-intro} by replacing `weak bialgebra' with `weak Hopf algebra'.
\end{definition}

This prompts the following series of questions.

\begin{question} \label{ques:UQGds} Take $A$ as in Hypothesis~\ref{hyp:A}.
In general:
\begin{enumerate}[(1),font=\upshape] 
\item When are the UQSGds $\Oleft(A)$, $\Oright(A)$, $\Otrans(A)$ weak Hopf algebras?
\smallskip
\item \label{ques:wha-env} What is a `weak Hopf envelope' (of a UQSGd)?
\end{enumerate}
Pertaining to the SL-type and GL-type constructions:
\begin{enumerate}[(1),font=\upshape] 
\setcounter{enumi}{2}
\item \label{ques:wha-det} What is the quantum determinant $\mathsf{D}$ of the coaction of a UQSGd $\Oleft(A)$ (or, $\Oright(A)$, $\Otrans(A)$) on $A$? 
\smallskip
\item \label{ques:wha-GL} Is $\mathsf{D}$ invertible, and if so, are $\Oleft(A)[\mathsf{D}^{-1}]$,  $\Oright(A)[\mathsf{D}^{-1}]$, $\Otrans(A)[\mathsf{D}^{-1}]$ weak Hopf algebras that coact on $A$ (universally) with arbitrary quantum determinant?
\smallskip
\item Are $\Oleft(A)/(\mathsf{D}-1)$,  $\Oright(A)/(\mathsf{D}-1)$, $\Otrans(A)/(\mathsf{D}-1)$ weak Hopf algebras that coact on $A$ (universally) with trivial quantum determinant?
\end{enumerate}
\end{question}


\smallskip

Finally, as discussed in the introduction: 
\begin{center} 
{\it Ideally, a universal (weak) bi/Hopf algebra should behave ring-theoretically\\ and homologically like the algebra that it coacts on.}
\end{center}
This holds for transposed coactions on many connected graded algebras in Hypothesis~\ref{hyp:A}; see, e.g., \cite{AST, WaltonWang}. Likewise, the best candidates  we have for the philosophy to hold for coactions on algebras in Hypothesis~\ref{hyp:A} are the transposed UQSGds [Definition~\ref{def:manin-uqsgd}] and the transposed UQGds (defined above). Naturally, we inquire:

\begin{question} \label{ques:ring-hom}
Take an algebra $A$ in Hypothesis~\ref{hyp:A}. In general, does the ring-theoretic and homological behavior of the transposed UQSGd and transposed UQGds of $A$ reflect that of $A$? More specifically, if $A$ has one of the following properties,
\begin{enumerate}
    \item finite Gelfand-Kirillov dimension/nice Hilbert series,
    \item Noetherian/coherent,
    \item domain/prime/semiprime,
    \item finite global dimension/finite injective dimension,
    \item skew (or twisted) Calabi-Yau,
\end{enumerate}
do the transposed UQSGd and transposed UQGds of $A$ satisfy the same property as well?
\end{question}

\noindent Pertinent articles include the work of Gaddis, Reyes, Rogalski, and Zhang on (non-connected) graded skew Calabi-Yau algebras \cite{RRZ,RR2018,RR2019,GR}. 

\begin{remark} \label{rem:fabio}
After the first version of this article appeared, Question~\ref{ques:ring-hom} was addressed in work of Calderón and Walton \cite{CalWal} in the case that $A$ is the path algebra $\kk Q$ of a finite quiver. It was shown that many properties hold for $\kk Q$ if and only if they hold for its UQSGd $\hay$, including: having finite dimension, having finite Gelfand--Kirillov dimension, being noetherian, being semiprime, and being Koszul. Graph-theoretic techniques were used to achieve these results.
\end{remark}

\bibliographystyle{alpha}
\bibliography{biblio}

\end{document}